\newtheorem{thm}{Theorem}[section]
\newtheorem{cor}[thm]{Corollary}
\newtheorem{lem}[thm]{Lemma}
\newtheorem{prop}[thm]{Proposition}
\newtheorem{question}[thm]{Question}
\theoremstyle{definition}
\newtheorem{exmpl}[thm]{Example}
\newtheorem{definition}[thm]{Definition}
\newtheorem{remark}[thm]{Remark}
\renewcommand{\epsilon}{\varepsilon}
\renewcommand{\phi}{\varphi}
\newcommand{\defeq}{\mathrel{\mathop:}=}
\DeclareMathOperator{\match}{match}
\DeclareMathOperator{\id}{id}
\DeclareMathOperator{\spt}{spt}
\DeclareMathOperator{\Pfin}{\mathscr{P}_{fin}}
\begin{document}

\setlist{noitemsep}

\author{Kate Juschenko}
\address{K.J., Department of Mathematics, University of Texas at Austin, 2515 Speedway C1200, Austin, TX 78712, USA}
\email{kate.juschenko@gmail.com}

\author{Friedrich Martin Schneider}
\address{F.M.S., Institute of Discrete Mathematics and Algebra, TU Bergakademie Freiberg, 09596 Freiberg, Germany}
\email{martin.schneider@math.tu-freiberg.de}

\title{Skew-amenability of topological groups}
\date{\today}

\begin{abstract} 
  	We study skew-amenable topological groups, i.e., those admitting a left-invariant mean on the space of bounded real-valued functions left-uniformly continuous in the sense of Bourbaki. We prove characterizations of skew-amenability for topological groups of isometries and automorphisms, clarify the connection with extensive amenability of group actions, establish a F\o lner-type characterization, and discuss closure properties of the class of skew-amenable topological groups. Moreover, we isolate a dynamical sufficient condition for skew-amenability and provide several concrete variations of this criterion in the context of transformation groups. These results are then used to decide skew-amenability for a number of examples of topological groups built from or related to Thompson's group $F$ and Monod's group of piecewise projective homeomorphisms of the real line.
\end{abstract}


\maketitle


\tableofcontents

\section{Introduction}

A topological group $G$ is called \emph{amenable} if every continuous action of $G$ on a non-void compact Hausdorff space admits an invariant regular Borel probability measure, or equivalently, if every continuous action of $G$ by affine homeomorphisms on a non-void compact convex subset of a locally convex topological vector space has a fixed point. By a fundamental result of Rickert~\cite[Theorem~4.2]{rickert}, a topological group is amenable if and only if there exists a left-invariant mean on the space of right-uniformly continuous bounded real-valued functions on $G$. In his recent analysis of finite-energy path and loop groups~\cite{Pestov2020}, Pestov suggested to study a sibling of amenability---\emph{skew-amenability}---defined along the lines of Rickert's theorem: a topological group $G$ is said to be \emph{skew-amenable} if the space of left-uniformly continuous bounded real-valued functions on $G$ admits a left-invariant mean, which is equivalent to the existence of a right-invariant mean on the space of right-uniformly continuous bounded real-valued functions on $G$. The relevance of this property is rooted in representation theory: by the work of Giordano and Pestov~\cite[Proposition~4.5]{GiordanoPestov}, every strongly continuous unitary representation of a skew-amenable topological group is amenable in the sense of Bekka~\cite{bekka}, and another argument by Pestov~\cite[Proof of Theorem~5.2]{Pestov18} shows that any skew-amenable topological group both having strong property $(T)$ and admitting a topologically faithful unitary representation must be precompact. On the other hand, classical work of Greenleaf~\cite[Theorem~2.2.1]{greenleaf} entails that, within the class of locally compact groups, skew-amenability is equivalent to amenability. Thus, in essence, skew-amenability is a property linked with \emph{infinite-dimensional groups}, i.e., large topological groups occurring as transformation groups in Ramsey theory, operator algebra, and mathematical physics.

The present note is devoted to a comprehensive study of skew-amenability, comprising characterizations of skew-amenability for topological isometry groups, its connection with extensive amenability of group actions, a F\o lner-type characterization, persistence under group-theoretic constructions, and examples. For the latter, particular focus will be on Thompson's group $F$ and Monod's group $H(\mathbb{R})$ of projective homeomorphisms of the real line. As a concrete application of our results, we will answer the following question raised by Pestov.

\begin{question}[Pestov~\cite{Pestov2020}]\label{question:pestov} Is the class of skew-amenable topological groups closed under extensions? \end{question}

Pestov's question has a connection with the notoriously open problem concerning amenability of Thompson's group $F$, which we view as a subgroup of $\mathrm{Aut}(D,{\leq})$ for $D \defeq [0,1] \cap \mathbb{Z}\!\left[ \tfrac{1}{2} \right]$. We prove that $F$ is skew-amenable with respect to the topology of pointwise convergence arising from the discrete topology on $D$ (Corollary~\ref{corollary:F}), whereas its closure $\overline{F} = \mathrm{Aut}(D,{\leq})$ is not (Proposition~\ref{proposition:aut.is.not.skew.amenable}). Moreover, we show that the topological group~$\mathbb{Z}^{(D)} \! \rtimes F$, with $\mathbb{Z}^{(D)}$ carrying the discrete topology and $F$ carrying the topology of pointwise convergence, is skew-amenable if and only if the discrete group $F$ is amenable (Corollary~\ref{corollary:kate}). Consequently, an affirmative answer to Question~\ref{question:pestov} would have implied amenability of the discrete group $F$.

We will answer Question~\ref{question:pestov} in the negative. In fact, we are going to exhibit an example of an action of a group $G$ on a set $X$ such that \begin{itemize}
	\item[---\,] $G$ is skew-amenable with respect to the topology of pointwise convergence induced by its action on the discrete topological space $X$,
	\item[---\,] for every non-trivial amenable group $H$, the associated semi-direct product $H^{(X)} \! \rtimes G$, with $H^{(X)}$ carrying the discrete topology, is not skew-amenable.
\end{itemize} As it turns out, such an example is given by Monod's group $H(\mathbb{R})$ acting on the projective line $\mathbf{P}^{1} \defeq \mathbf{P}^{1}(\mathbb{R})$. In particular, while $H(\mathbb{R})$ is skew-amenable with respect to the topology of pointwise convergence arising from the discrete topology on $\mathbf{P}^{1}$ (Corollary~\ref{corollary:monod}), the topological group $(\mathbb{Z}/2\mathbb{Z})^{(\mathbf{P}^{1})} \! \rtimes H(\mathbb{R})$ is not skew-amenable (Corollary~\ref{corollary:semidirect.monod}). The latter is a consequence of our Theorem~\ref{theorem:extensive} (more concretely, Corollary~\ref{corollary:extensive}) clarifying the relationship between extensive amenability of group actions and skew-amenability of some corresponding topological semi-direct products, and the work of Juschenko, Matte Bon, Monod, and de la Salle~\cite[Theorem~6.1]{JMMS} establishing non-extensive amenability of the considered action.

The concrete results described above are instances of some general dynamical phenomena. Motivated by the considered examples, we isolate the notion of \emph{proximal simulation} (Definition~\ref{definition:proximal.simulation}(1)). Roughly speaking, a topological group $G$ is proximally simulated by a subgroup $H \leq G$ if the action of $G$ on itself by left-translations can be approximately mimicked, upon composition with a suitable right-translation, by the left-translation action of $H$ on~$G$. Using a weak form of this concept, \emph{weak proximal simulation} (Definition~\ref{definition:proximal.simulation}(2)), we establish an abstract sufficient criterion for skew-amenability (Proposition~\ref{proposition:weak.proximal.simulation.implies.amenability}), which entails the following: any topological group weakly proximally simulated by some discretely amenable subgroup is skew-amenable (Corollary~\ref{corollary:weak.proximal.simulation.implies.amenability}). This result admits a number of rather concrete variations for topological groups of isometries and permutations (Proposition~\ref{proposition:approximation.implies.skew.amenability} and Corollary~\ref{corollary:approximation.implies.skew.amenability}--\ref{corollary:permutation.groups}) and, in particular, applies to Monod's group $H(\mathbb{R})$ and Thompson's group $F$.

This article is organized as follows. In Section~\ref{section:preliminaries}, we set up some preliminaries and clarify some notation concerning invariant means and uniform spaces. The following Section~\ref{section:skew.amenability} contains a characterization of skew-amenability for isometry groups and permutation groups, which will be applied in Section~\ref{section:extensive} to clarify the connection between extensive amenability of group actions and skew-amenability of certain topological groups naturally associated to these. In Section~\ref{section:folner}, we note a characterization of skew-amenability of topological groups in the spirit of F\o lner's amenability criterion for discrete groups. The subsequent Section~\ref{section:persistence} is devoted to a discussion of permanence properties of the class of skew-amenable topological groups. In Section~\ref{section:proximal.simulation}, we provide a method for proving skew-amenability using our concept of (weak) proximal simulation. In the final Sections~\ref{section:monod} and~\ref{section:thompson}, we decide skew-amenability for the above-mentioned examples.

\section{Preliminaries: means and uniform spaces}\label{section:preliminaries}

\subsection{General notation} We start off with some general notation and terminology. Given a normed space $E$, we will denote by $E^{\ast}$  the \emph{continuous dual} of $E$, i.e., the Banach space of all continuous linear maps from $E$ to its underlying scalar field. Now, let $X$ be a set. We will denote by $\mathscr{P}(X)$ the power set of $X$ and by $\Pfin (X)$ the set of all finite subsets of $X$. Furthermore, we will consider the unital real Banach algebra $\ell^{\infty}(X)$ of all bounded real-valued functions on $X$ equipped with the supremum norm \begin{displaymath}
	\Vert f \Vert_{\infty} \, \defeq \, \sup \{ \vert f(x) \vert \mid x \in X \} \qquad (f \in \ell^{\infty}(X)) 
\end{displaymath} as well as the real Banach space $\ell^{1}(X)$ of all unconditionally summable real-valued functions on $X$ endowed with the norm \begin{displaymath}
	\Vert f \Vert_{\infty} \, \defeq \, \sum\nolimits_{x \in X} \vert f(x) \vert \qquad (f \in \ell^{1}(X)) \, .
\end{displaymath} For any $f \in \mathbb{R}^{X}$, we denote $\spt (f) \defeq f^{-1}(\mathbb{R}\setminus\{ 0 \})$. Moreover, we let \begin{displaymath}
	\Delta (X) \, \defeq \, \!\left\{ \mu \in [0,1]^{X} \left\vert \, \spt (\mu) \text{ finite}, \, \sum\nolimits_{x \in X} \mu (x) = 1 \right\} \! \, \subseteq \, \ell^{1}(X) \, . \right.     
\end{displaymath} As usual, for each $x \in X$, we define $\delta_{x} \in \Delta (X)$ by \begin{displaymath}
	\delta_{x}(y) \, \defeq \, \begin{cases}
			\, 1 & \text{for } y=x \, , \\
			\, 0 & \text{for } y \in X \setminus \{ x \} \, .
		\end{cases}
\end{displaymath} If $F$ is a non-empty finite subset of $X$, then we define \begin{displaymath}
	\delta_{F} \, \defeq \, \tfrac{1}{\vert F \vert}\sum\nolimits_{x \in F} \delta_{x} \, \in \, \Delta (X) .
\end{displaymath} Now, consider any linear subspace $A \subseteq \ell^{\infty}(X)$ containing $\mathbf{1} \colon A \to \mathbb{R}, \, x \mapsto 1$. Then a \emph{mean} on $A$ is a positive linear functional $\mu \colon A \to \mathbb{R}$ such that $\mu (\mathbf{1}) = 1$. The set of all means on $A$ will be denoted by $\mathrm{M}(A)$. If $A$ separates the elements of $X$, in the sense that $\sup_{f \in A} \vert f(x) - f(y) \vert > 0$ for any two distinct $x,y \in X$, then we may and will consider $\Delta (X)$ as a subset of $\mathrm{M}(A)$ by identifying each element $\mu \in \Delta (X)$ with the corresponding mean \begin{displaymath}
	A \, \longrightarrow \, \mathbb{R}, \quad f \, \longmapsto \, a(f) \, \defeq \, \sum\nolimits_{x \in X} f(x)\mu(x) \, .
\end{displaymath}

\subsection{Invariant means} We clarify some notation and terminology regarding group actions. For this purpose, let $\alpha \colon G \times X \to X$ be an action of a group $G$ on a set $X$. For $g \in G$, let $\alpha_{g} \colon X \to X, \, x \mapsto \alpha (g,x)$. Of course, as is customary, if the action $\alpha$ is understood, then we will simply write $gx \defeq \alpha_{g}(x)$ whenever $g \in G$ and $x \in X$. A subset $A \subseteq \ell^{\infty}(X)$ will be called \emph{$\alpha$-invariant} or \emph{$G$-invariant}\footnote{provided that there is no possible ambiguity concerning the action} if $f \circ \alpha_{g} \in A$ for all $f \in A$ and $g \in G$. Considering an $\alpha$-invariant linear subspace $A \subseteq \ell^{\infty}(X)$, a functional $\mu \in A^{\ast}$ will be called \emph{$\alpha$-invariant} or \emph{$G$-invariant}\footnotemark[1] if $\mu (f \circ \alpha_{g}) = \mu (f)$ for all $f \in A$ and $g \in G$. Moreover, for $g \in G$, let $\lambda_{g} \colon G \to G, \, x \mapsto gx$ and $\rho_{g} \colon G \to G, \, x \mapsto xg$. A subset $A \subseteq \ell^{\infty}(G)$ is called \emph{left-invariant} (resp.,~\emph{right-invariant}) if $f \circ \lambda_{g} \in A$ (resp.,~$f \circ \rho_{g} \in A$) for all $f \in A$ and $g \in G$. A subset of $\ell^{\infty}(G)$ is called \emph{bi-invariant} if it is both left- and right-invariant. Given a left-invariant (resp.,~right-invariant) linear subspace $A \subseteq \ell^{\infty}(X)$, we call an element $\mu \in A^{\ast}$ \emph{left-invariant} (resp.,~\emph{right-invariant}) if $\mu (f \circ \lambda_{g}) = \mu (f)$ (resp.,~$\mu (f \circ \rho_{g}) = \mu (f)$) for all $f \in A$ and $g \in G$.

\begin{lem}\label{lemma:directed.union.amenable} Let $\alpha \colon G \times X \to X$ be an action of a group $G$ on a set $X$ and let $\mathscr{A}$ be an upward directed set of $\alpha$-invariant linear subspaces of $\ell^{\infty}(X)$ with $\mathbf{1} \in \bigcap \mathscr{A}$. If every member of $\mathscr{A}$ admits an $\alpha$-invariant mean, then so does the norm-closure of $\bigcup \mathscr{A}$ in $\ell^{\infty}(X)$. \end{lem}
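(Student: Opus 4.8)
The plan is to realise the desired mean inside the weak-$\ast$ compact set $\mathrm{M}(B)$ of all means on $B \defeq \overline{\bigcup \mathscr{A}}$ (the norm-closure in $\ell^{\infty}(X)$) and to extract it by a finite-intersection argument powered by the upward directedness of $\mathscr{A}$. As a preliminary observation, since every member of $\mathscr{A}$ is $\alpha$-invariant and the operation $f \mapsto f \circ \alpha_{g}$ is a linear norm-isometry of $\ell^{\infty}(X)$ (as each $\alpha_{g}$ is a bijection of $X$), the union $\bigcup \mathscr{A}$ is $\alpha$-invariant, and so is its norm-closure $B$; thus it is meaningful to seek an $\alpha$-invariant mean on $B$. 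I will also use the standard fact that every mean on a linear subspace $A \subseteq B$ with $\mathbf{1} \in A$ extends to a mean on $B$: a mean has norm $1$, so Hahn--Banach provides a norm-$1$ extension fixing $\mathbf{1}$, and any norm-$1$ functional sending $\mathbf{1}$ to $1$ on a subspace of $\ell^{\infty}(X)$ containing the constants is automatically positive.

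Next I would, for each $A \in \mathscr{A}$, consider
\begin{displaymath}
	K_{A} \, \defeq \, \left\{ \mu \in \mathrm{M}(B) \mid \mu (f \circ \alpha_{g}) = \mu (f) \text{ for all } f \in A,\ g \in G \right\} ,
\end{displaymath}
the set of means on $B$ restricting to an $\alpha$-invariant functional on $A$. Each $K_{A}$ is weak-$\ast$ closed, being the intersection over $f \in A$ and $g \in G$ of the zero-sets of the weak-$\ast$ continuous maps $\mu \mapsto \mu(f \circ \alpha_{g}) - \mu(f)$, and it is non-empty: extending an $\alpha$-invariant mean on $A$ to a mean on $B$ produces an element of $K_{A}$, since $f \circ \alpha_{g} \in A$ whenever $f \in A$. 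Crucially, the family $(K_{A})_{A \in \mathscr{A}}$ enjoys the finite intersection property, for given $A_{1}, \dots, A_{n} \in \mathscr{A}$ upward directedness yields some $A \in \mathscr{A}$ with $A_{i} \subseteq A$ for all $i$, whence $K_{A} \subseteq \bigcap_{i=1}^{n} K_{A_{i}}$ and the latter is non-empty.

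Finally, weak-$\ast$ compactness of $\mathrm{M}(B)$ forces $\bigcap_{A \in \mathscr{A}} K_{A} \neq \emptyset$; any $\mu$ in this intersection satisfies $\mu(f \circ \alpha_{g}) = \mu(f)$ for all $f \in \bigcup \mathscr{A}$ and $g \in G$, and since $\mu$ is norm-continuous while $f \mapsto f \circ \alpha_{g}$ is a norm-isometry, this identity persists on the closure $B$, exhibiting $\mu$ as an $\alpha$-invariant mean on $B$. I expect the only real subtlety to lie in the passage to the limit: individual invariant means live on different subspaces and need not cohere, so one cannot simply ``take a limit'' naively. The finite-intersection packaging resolves this, the decisive input being that directedness allows any finitely many invariance constraints to be met by a single member of $\mathscr{A}$, after which compactness upgrades consistency of finite subfamilies to a single mean invariant on all of $\bigcup \mathscr{A}$.
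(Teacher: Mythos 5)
Your proposal is correct, and it takes a genuinely different route from the paper's own argument. Both proofs ultimately rest on weak-$\ast$ compactness of the set of means on $B = \overline{\bigcup \mathscr{A}}$, but they differ in how the relevant closed sets are shown to be non-empty. The paper reduces the claim to an \emph{approximate} invariance statement: for every $\epsilon > 0$, every finite $E \subseteq G$ and every finite $F \subseteq B$, it produces a finitely supported mean $\mu \in \Delta(X)$ that is $\epsilon$-invariant on $F$ under $E$. This is done by approximating each $f \in F$ in norm by some $f^{\ast}$ lying in a single member $A \in \mathscr{A}$ (using directedness), invoking the weak-$\ast$ density of $\Delta(X)$ in $\mathrm{M}(A)$ to approximate the exact invariant mean on $A$, and closing with a three-epsilon estimate. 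You instead work with \emph{exact} invariance: you extend an $\alpha$-invariant mean on $A$ to a mean on all of $B$ via Hahn--Banach (correctly observing that a norm-one extension fixing $\mathbf{1}$ is automatically positive), so that each set $K_A$ of means on $B$ invariant on $A$ is non-empty and weak-$\ast$ closed, and directedness gives the finite intersection property. Your argument is arguably cleaner: it dispenses with the $\epsilon/3$ bookkeeping and with the density of $\Delta(X)$ in $\mathrm{M}(A)$ (the one external fact the paper cites beyond compactness), replacing them with the Hahn--Banach extension. What the paper's formulation buys in exchange is quantitative information aligned with the rest of its machinery: the invariant mean is exhibited as a weak-$\ast$ cluster point of finitely supported means, i.e., the proof passes through the same $\Delta(X)$-approximation format that underlies the F\o lner-type criteria (Theorem 5.5 and the arguments of Section 7), so the lemma's proof and its later applications speak the same language. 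As a minor point of care, your step ``invariance on $\bigcup \mathscr{A}$ persists on the closure'' does require, as you note, that $B$ itself is $\alpha$-invariant and that each $f \mapsto f \circ \alpha_g$ is an isometry; you addressed both, so the argument is complete.
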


\begin{proof} Let $C$ denote the norm-closure of $\bigcup \mathscr{A}$ in $\ell^{\infty}(X)$. As $\mathrm{M}(C)$ is compact with regard to the weak-$\ast$ topology (see, e.g., \cite[2.1, Theorem~1.8(i), p.~68]{AnalysisOnSemigroups}), it suffices to deduce that \begin{align*}
	\forall \epsilon > 0 \ \forall E \in \Pfin (G) \ \forall F \in \Pfin &(C) \ \exists \mu \in \Delta (X) \colon \\
		& \sup\nolimits_{g \in E} \sup\nolimits_{f \in F} \vert \mu(f) - \mu (f \circ \alpha_{g}) \vert \, \leq \, \epsilon \, .
\end{align*} For this purpose, let $\epsilon > 0$, $E \in \Pfin (G)$ and $F \in \Pfin (C)$. By definition of $C$ and upward directedness of $\mathscr{A}$, we find $A \in \mathscr{A}$ and a family $(f^{\ast})_{f \in F} \in A^{F}$ such that $\Vert f^{\ast} -f \Vert_{\infty} \leq \tfrac{\epsilon}{3}$ for each $f \in F$. By our hypothesis, $A$ admits an $\alpha$-invariant mean. Since $\Delta (X)$ is weak-$\ast$ dense in $\mathrm{M}(A)$ (see, e.g., \cite[2.1, Theorem~1.8(ii), p.~68]{AnalysisOnSemigroups}), there exists some $\mu \in \Delta (X)$ such that $\vert \mu (f^{\ast}) - \mu (f^{\ast} \circ \alpha_{g}) \vert \leq \tfrac{\epsilon}{3}$ for all $f \in F$ and $g \in E$. We conclude that \begin{align*}
	\vert \mu (f) - \mu (f \circ \alpha_{g}) \vert \, &\leq \, \vert \mu(f) - \mu (f^{\ast}) \vert + \vert \mu (f^{\ast}) - \mu (f^{\ast} \circ \alpha_{g}) \vert + \vert \mu(f^{\ast} \circ \alpha_{g}) - \mu(f^{\ast} \circ \alpha_{g}) \vert \\
		& \leq \, \Vert f^{\ast} -f \Vert_{\infty} + \vert \mu (f^{\ast}) - \mu (f^{\ast} \circ \alpha_{g}) \vert + \Vert (f^{\ast} \circ \alpha_{g}) - (f \circ \alpha_{g}) \Vert_{\infty} \, \leq \, \epsilon 
\end{align*} whenever $f \in F$ and $g \in E$, as desired. \end{proof}

We are going to apply Lemma~\ref{lemma:directed.union.amenable} in the proof of Proposition~\ref{proposition:isometry.group}. Furthermore, let us note that Lemma~\ref{lemma:directed.union.amenable} can be used to prove the following result of Pestov~\cite{Pestov12}: the inverse limit of any directed family of amenable topological groups, whose limit projections have dense images, is again amenable.

\subsection{Uniform spaces} For the sake of convenience, we recollect some standard terminology concerning uniform spaces. A \emph{uniformity} on a set $X$ is a filter $\mathscr{E}$ on the set $X \times X$ such that \begin{enumerate}
	\item[---\,] $\{ (x,x) \mid x \in X \} \subseteq E$ for every $E \in \mathscr{E}$,
	\item[---\,] $E^{-1} \in \mathscr{E}$ for every $E \in \mathscr{E}$,
	\item[---\,] for every $E_{0} \in \mathscr{E}$ there exists $E_{1} \in \mathscr{E}$ such that $E_{1} \circ E_{1} \subseteq E_{0}$.
\end{enumerate} A \emph{uniform space} is a set $X$ equipped with a uniformity, the elements of which are called the \emph{entourages} of the uniform space $X$. For a uniform space $X$, the \emph{induced topology} on $X$ is defined as follows: a subset $S \subseteq X$ is \emph{open} in $X$ if, for every $x \in S$, there is an entourage $E$ of $X$ such that $\{ y \in X \mid (x,y) \in E \} \subseteq S$. It is well known that every compact Hausdorff space admits a unique uniformity inducing its topology~\cite[II, \S 4.1, Theorem~1]{bourbaki}. Furthermore, any metric space naturally gives rise to a uniform space: if $(X,d)$ is a metric space, then \begin{displaymath}
	\mathscr{E}_{d} \, \defeq \, \{ E \subseteq X \times X \mid \exists \epsilon > 0 \, \forall x,y \in X \colon \, d(x,y) < \epsilon \Longrightarrow (x,y) \in E \}
\end{displaymath} constitutes a uniformity on the set $X$ inducing the topology generated by $d$, and we will occasionally consider $(X,d)$ as a uniform space by referring to $(X,\mathscr{E}_{d})$.

Let $X$ and $Y$ be uniform spaces. A map $f \colon X \to Y$ is called \emph{uniformly continuous} if for every entourage $E$ of~$Y$ there is an entourage $F$ of~$X$ such that $\{ (f(x),f(y)) \mid (x,y) \in F \} \subseteq E$. A bijection $f \colon X \to Y$ is called an \emph{isomorphism} if both $f$ and $f^{-1}$ are uniformly continuous maps. A set $H \subseteq Y^{X}$ is said to be \emph{uniformly equicontinuous} if for every entourage $F$ of $Y$ there exists an entourage $E$ of~$X$ such that $\{ (f(x),f(y)) \mid (x,y) \in E \} \subseteq F$ whenever $f \in H$. Let us point out that a set $H \subseteq \mathbb{R}^{X}$ is uniformly equicontinuous continuous with respect to the uniformity on $\mathbb{R}$ induced by the Euclidean metric if and only if for every $\epsilon > 0$ there exists an entourage $E$ of $X$ such that \begin{displaymath}
	\forall f \in H \ \forall (x,y) \in E \colon \qquad \vert f(x) - f(y) \vert \, \leq \, \epsilon .
\end{displaymath} In particular, a function $f \colon X \to \mathbb{R}$ is uniformly continuous if and only if for every $\epsilon > 0$ there exists an entourage $E$ of $X$ such that \begin{displaymath}
	\forall (x,y) \in E \colon \qquad \vert f(x) - f(y) \vert \, \leq \, \epsilon .
\end{displaymath} The set $\mathrm{UCB}(X)$ of all uniformly continuous, bounded real-valued functions on~$X$ is a closed unital subalgebra of $\ell^{\infty}(X)$. A subset $H \subseteq \mathrm{UCB}(X)$ is called \emph{UEB} (shorthand for \emph{uniformly equicontinuous, bounded}) if $H$ is uniformly equicontinuous and $\Vert \cdot \Vert_{\infty}$-bounded. The collection $\mathrm{UEB}(X)$ of all UEB subsets of $\mathrm{UCB}(X)$ constitutes a convex vector bornology on $\mathrm{UCB}(X)$. The set $\mathrm{UEB}(X)$ is an important tool in Pachl's monograph~\cite{PachlBook}.

\section{Skew-amenability}\label{section:skew.amenability}

In this section we will define skew-amenability and provide a simple characterization of this property in the context of topological groups of isometries (Proposition~\ref{proposition:isometry.group}). For convenience in later considerations, we will start by compiling some necessary background on topological groups and uniform spaces.

Let $G$ be a topological group. We will denote by $\mathscr{U}(G)$ the set of all neighborhoods of the neutral element in $G$. For $U \in \mathscr{U}(G)$, let us define \begin{displaymath}
	U_{\Rsh} \, \defeq \, \left\{ (x,y) \in G \times G \left\vert \, xy^{-1} \in U \right\} \! , \right. \qquad U_{\Lsh} \, \defeq \, \left\{ (x,y) \in G \times G \left\vert \, x^{-1}y \in U \right\} \! . \right.
\end{displaymath} The following two uniformities are naturally associated with $G$: both the \emph{right uniformity} \begin{displaymath}
	\mathscr{E}_{\Rsh}(G) \, \defeq \, \left\{ E \subseteq G \times G \left\vert \, \exists U \in \mathscr{U}(G) \colon \, U_{\Rsh} \subseteq E \right\} \right.
\end{displaymath} and the \emph{left uniformity} \begin{displaymath}
	\mathscr{E}_{\Lsh}(G) \, \defeq \, \left\{ E \subseteq G \times G \left\vert \, \exists U \in \mathscr{U}(G) \colon \, U_{\Lsh} \subseteq E \right\}  \right.
\end{displaymath} induce the original topology of~$G$. As would seem natural, a real-valued function $f \colon G \to \mathbb{R}$ will be called \emph{right-uniformly continuous} (resp.,~\emph{left-uniformly continuous}) if $f$ is uniformly continuous with respect to $\mathscr{E}_{\Rsh}(G)$ (resp.,~$\mathscr{E}_{\Lsh}(G)$), and a subset $H \subseteq \mathbb{R}^{G}$ will be called \emph{right-uniformly equicontinuous} (resp.,~\emph{left-uniformly equicontinuous}) if $H$ is uniformly equicontinuous with respect to $\mathscr{E}_{\Rsh}(G)$ (resp.,~$\mathscr{E}_{\Lsh}(G)$). Of course, both of the associated closed unital subalgebras of~$\ell^{\infty}(G)$, that is, \begin{displaymath}
	\mathrm{RUCB}(G) \, \defeq \, \mathrm{UCB}(G,\mathscr{E}_{\Rsh}(G)) , \qquad \mathrm{LUCB}(G) \, \defeq \, \mathrm{UCB}(G,\mathscr{E}_{\Lsh}(G))
\end{displaymath} are bi-invariant. Furthermore, we abbreviate \begin{displaymath}
	\mathrm{RUEB}(G) \, \defeq \, \mathrm{UEB}(G,\mathscr{E}_{\Rsh}(G)) , \qquad \mathrm{LUEB}(G) \, \defeq \, \mathrm{UEB}(G,\mathscr{E}_{\Lsh}(G)) .
\end{displaymath} As usual, we will say that $G$ has \emph{small invariant neighborhoods} or call $G$ a \emph{SIN} group if \begin{displaymath}
	\forall U \in \mathscr{U}(G) \colon \quad \bigcap\nolimits_{g \in G} g^{-1}Ug \, \in \, \mathscr{U}(G) .
\end{displaymath} It is easy to see that $G$ has small invariant neighborhoods if and only if $\mathscr{E}_{\Rsh}(G) = \mathscr{E}_{\Lsh}(G)$. In particular, if $G$ has small invariant neighborhoods, then $\mathrm{RUCB}(G) = \mathrm{LUCB}(G)$.

We note the following compatibility properties of the two uniformities introduced above with respect to subgroups and quotients.

\begin{remark}\label{remark:induced.uniformities} Let $G$ be a topological group. \begin{itemize}
	\item[$(1)$] Let $H$ be a topological subgroup of $G$. The right (resp., left) uniformity of $H$ coincides with the subspace uniformity inherited from $(G,\mathscr{E}_{\Rsh}(H))$ (resp., $(G,\mathscr{E}_{\Lsh}(H))$), i.e., \begin{align*}
					\qquad \quad  \mathscr{E}_{\Rsh}(H) \, &= \, \{ E \cap (H \times H) \mid E \in \mathscr{E}_{\Rsh}(G) \} , \\
					\mathscr{E}_{\Lsh}(H) \, &= \, \{ E \cap (H \times H) \mid E \in \mathscr{E}_{\Lsh}(G) \} .
				\end{align*} Thanks to the well-known extension theorem for uniformly continuous bounded real-valued functions (see, e.g.,~\cite[Theorem~2.22]{PachlBook}), this moreover entails that \begin{align*}
					\qquad \quad \mathrm{RUCB}(H) \, &= \, \{ {f\vert_{H}} \mid f \in \mathrm{RUCB}(G) \} , \\
					\mathrm{LUCB}(H) \, &= \, \{ {f\vert_{H}} \mid f \in \mathrm{LUCB}(G) \} .
				\end{align*}
	\item[$(2)$] Let $N$ be a normal subgroup of $G$. Then $G/N$ constitutes a topological group with respect to the quotient topology, that is, the final topology induced by the projection $\pi_{N} \colon G \to G/N, \, x \mapsto xN$. It is straightforward to check that \begin{align*}
					\qquad \quad  \mathscr{E}_{\Rsh}(G/N) \, &= \, \{\{ (xN,yN) \mid (x,y) \in U_{\Rsh} \} \mid U \in \mathscr{U}(G) \} , \\
					\mathscr{E}_{\Lsh}(G/N) \, &= \, \{\{ (xN,yN) \mid (x,y) \in U_{\Lsh} \} \mid U \in \mathscr{U}(G) \} ,
				\end{align*} and that the maps \begin{align*}
					\qquad \quad &\mathrm{RUCB}(G/N) \, \longrightarrow \, \{ h \in \mathrm{RUCB}(G) \mid \forall g \in G \colon \, h \circ {\rho_{g}} = h \} , \quad f \, \longmapsto \, f \circ {\pi_{N}} , \\
					 &\mathrm{LUCB}(G/N) \, \longrightarrow \, \{ h \in \mathrm{LUCB}(G) \mid \forall g \in G \colon \, h \circ {\rho_{g}} = h \} , \quad f \, \longmapsto \, f \circ {\pi_{N}}
				\end{align*} are isometric isomorphisms of unital real Banach algebras.
\end{itemize} \end{remark}

For later reference, let us briefly recollect some standard terminology concerning actions of topological groups on uniform spaces. An action $\alpha \colon G \times X \to X$ of a topological group $G$ on a uniform space $X$ will be called \emph{uniformly equicontinuous} if \begin{enumerate}
	\item[---\,] the set $\{ \alpha_{g} \mid g \in G \}$ is uniformly equicontinuous, and
	\item[---\,] for each $x \in X$, the map $G \to X, \, g \mapsto \alpha (g,x)$ is continuous.
\end{enumerate} Evidently, the second condition is void for actions of discrete groups, whereas the first condition is void for actions on discrete uniform spaces. It is not difficult to see that an action $\alpha \colon G \times X \to X$ of a topological group $G$ on a uniform space~$X$ is uniformly equicontinuous if and only if \begin{enumerate}
	\item[---\,] the set $\{ \alpha_{g} \mid g \in G \}$ is uniformly equicontinuous, and
	\item[---\,] the map $\alpha \colon G \times X \to X$ is continuous.
\end{enumerate} Also, let us note that a continuous action of a topological group $G$ by automorphisms on a topological group $H$ is uniformly equicontinuous with respect to the right (resp., left) uniformity on $H$ if and only if \begin{displaymath}
	\forall U \in \mathcal{U}(H) \ \exists V \in \mathcal{U}(H) \ \forall g \in G \colon \quad \alpha_{g}(V) \subseteq U .
\end{displaymath} Furthermore, we recall that an action $\alpha \colon G \times X \to X$ of a topological group $G$ by isomorphisms on a uniform space $X$ is said to be \emph{bounded}~\cite[Definition~3.2]{vries75} if, for every entourage $E$ of $X$, there exists $U \in \mathscr{U}(G)$ such that \begin{displaymath}
	\forall g \in U \ \forall x \in X \colon \quad (x,\alpha_{g}(x)) \in E .
\end{displaymath} It is easily verified that an action $\alpha \colon G \times X \to X$ of a topological group $G$ by isomorphisms on a uniform space $X$ is bounded if and only if the associated homomorphism $G \to \mathrm{Aut}(X), \, g \mapsto \alpha_{g}$ is continuous with respect to the topology of uniform convergence on $\mathrm{Aut}(X)$. In particular, every bounded action of a topological group by isomorphisms on a uniform space is necessarily continuous. Moreover, let us note that an action $\alpha$ of a topological group $G$ by automorphisms on a topological group $H$ is bounded with respect to the right uniformity on $H$ if and only if $\alpha$ is bounded with respect to the left uniformity on $H$.

\begin{remark}[cf.~\cite{vries75}, Proposition~3.3]\label{remark:vries} Every continuous action of a topological group on a compact Hausdorff space $X$ is bounded with respect to the unique uniformity inducing the topology of~$X$. \end{remark}

For later reference, we briefly recollect some well-known facts about precompact groups. Recall that a topological group $G$ is \emph{precompact} if, for every $U \in \mathscr{U}(G)$, there exists a finite subset $S \subseteq G$ such that $G = US$.

\begin{lem}\label{lemma:precompact} Every bounded action of a precompact topological group by isomorphisms on a uniform space is uniformly equicontinuous. \end{lem}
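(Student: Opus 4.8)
The plan is to verify the two conditions in the equivalent description of uniform equicontinuity recorded just above the statement, namely that the family $\{\alpha_{g} \mid g \in G\}$ is uniformly equicontinuous and that the action map $\alpha \colon G \times X \to X$ is continuous. The second of these comes for free: as observed in the excerpt, every bounded action by isomorphisms is automatically continuous. Hence the entire content of the lemma is to show that $\{\alpha_{g} \mid g \in G\}$ is uniformly equicontinuous, i.e., that for each entourage $F$ of $X$ there is an entourage $E$ of $X$ with $(\alpha_{g}(x),\alpha_{g}(y)) \in F$ for all $g \in G$ and all $(x,y) \in E$.

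First I would pass to a well-behaved gauge and then exploit precompactness to reduce to finitely many transformations. Iterating the uniformity axiom $E_{1} \circ E_{1} \subseteq E_{0}$ twice and intersecting with the inverse, I obtain a symmetric entourage $F_{1} = F_{1}^{-1}$ with $F_{1} \circ F_{1} \circ F_{1} \subseteq F$. Applying boundedness of the action to $F_{1}$ yields a neighborhood $U \in \mathscr{U}(G)$ of the identity such that $(x,\alpha_{g}(x)) \in F_{1}$ for every $g \in U$ and every $x \in X$. Precompactness now furnishes a \emph{finite} set $S \subseteq G$ with $G = US$. Since each $\alpha_{s}$ (for $s \in S$) is an isomorphism, hence uniformly continuous, there is for each $s \in S$ an entourage $E_{s}$ of $X$ with $(\alpha_{s}(x),\alpha_{s}(y)) \in F_{1}$ whenever $(x,y) \in E_{s}$; finiteness of $S$ guarantees that $E \defeq \bigcap_{s \in S} E_{s}$ is again an entourage of $X$.

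The verification is then a short diagram chase. Fix $g \in G$ and write $g = us$ with $u \in U$ and $s \in S$, so that $\alpha_{g} = \alpha_{u} \circ \alpha_{s}$. For $(x,y) \in E \subseteq E_{s}$, put $a \defeq \alpha_{s}(x)$ and $b \defeq \alpha_{s}(y)$; then $(a,b) \in F_{1}$ by the choice of $E_{s}$, while boundedness gives $(a,\alpha_{u}(a)) \in F_{1}$ and $(b,\alpha_{u}(b)) \in F_{1}$. Reversing the first of these by symmetry of $F_{1}$, the chain $\alpha_{u}(a) \to a \to b \to \alpha_{u}(b)$ shows that $(\alpha_{g}(x),\alpha_{g}(y)) = (\alpha_{u}(a),\alpha_{u}(b)) \in F_{1} \circ F_{1} \circ F_{1} \subseteq F$, which is exactly what was needed. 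The only genuinely delicate point is the entourage bookkeeping: one must take $F_{1}$ symmetric with a triple composition landing in $F$, and order the factorization as $g = us$ so that the bounded factor $\alpha_{u}$ sits on the \emph{outside}, where its (small) displacement can be absorbed by $F_{1}$ at both ends of the chain. Everything else is routine once precompactness has replaced the whole group $G$ by the finite set $S$.
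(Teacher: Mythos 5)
Your proposal is correct and follows essentially the same route as the paper's proof: a symmetric entourage with $F_{1} \circ F_{1} \circ F_{1} \subseteq F$, boundedness to control the $U$-factor, precompactness to write $G = US$ with $S$ finite, uniform continuity of the finitely many $\alpha_{s}$ to produce one entourage $E$, and the identical three-step chain $\alpha_{u}(\alpha_{s}(x)) \to \alpha_{s}(x) \to \alpha_{s}(y) \to \alpha_{u}(\alpha_{s}(y))$. The only cosmetic differences are that you intersect entourages $E_{s}$ over $s \in S$ where the paper directly asserts a single $E_{2}$, and that you explicitly note continuity of the action map (the second condition in the definition), which the paper leaves implicit via its earlier remark that bounded actions are continuous.
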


\begin{proof} Consider any bounded action of a precompact topological group $G$ by isomorphisms on a uniform space $X$. To prove uniform equicontinuity, let $E_{0}$ be an entourage of $X$. Then we find an entourage $E_{1}$ of $X$ such that $E_{1}^{-1} = E_{1}$ and ${E_{1}} \circ {E_{1}} \circ {E_{1}} \subseteq {E_{0}}$. Since the action is bounded, there exists $U \in \mathscr{U}(G)$ such that \begin{displaymath}
	\forall g \in U \ \forall x \in X \colon \quad (x,gx) \in E_{1} .
\end{displaymath} Being precompact, $G$ admits a finite subset $S \subseteq G$ such that $G=US$. As $G$ acts on $X$ by isomorphisms, there exists an entourage $E_{2}$ of $X$ such that \begin{displaymath}
	\forall s \in S \ \forall (x,y) \in E_{2} \colon \quad (sx,sy) \in E_{1} .
\end{displaymath} We now show that \begin{equation}\tag{$\ast$}\label{precompact}
	\forall g \in G \ \forall (x,y) \in E_{2} \colon \quad (gx,gy) \in E_{3} .
\end{equation} To this end, let $g \in G$. Then there exists $s \in S$ with $g \in Us$. We conclude that $(sx,gx) \in E_{1}$ and thus also $(gx,sx) \in E_{1}^{-1} = E_{1}$ for every $x \in X$. Hence, if $(x,y) \in E_{2}$, then \begin{displaymath}
	(gx,sx) \in E_{1} , \quad (sx,sy) \in E_{1} , \quad (sy,gy) \in E_{1} ,
\end{displaymath} thus $(gx,gy) \in {E_{1}} \circ {E_{1}} \circ {E_{1}} \subseteq {E_{0}}$. This proves~\eqref{precompact} and therefore completes the argument. \end{proof}

\begin{cor}\label{corollary:precompact} Every continuous action of a precompact topological group on a compact Hausdorff space~$X$ is uniformly equicontinuous with respect to the unique uniformity inducing the topology of~$X$. \end{cor}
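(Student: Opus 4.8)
The plan is to combine the two results immediately preceding this corollary. Corollary~\ref{corollary:precompact} asserts that every continuous action of a precompact topological group on a compact Hausdorff space $X$ is uniformly equicontinuous with respect to the unique uniformity inducing the topology of $X$. First I would invoke Remark~\ref{remark:vries}: a continuous action of any topological group on a compact Hausdorff space $X$ is automatically bounded with respect to the unique uniformity inducing the topology of $X$. This converts the hypothesis of continuity into the hypothesis of boundedness, which is precisely the input required by Lemma~\ref{lemma:precompact}.

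Next I would apply Lemma~\ref{lemma:precompact}, which states that every bounded action of a precompact topological group by isomorphisms on a uniform space is uniformly equicontinuous. Here I should be a little careful about the phrase \emph{by isomorphisms}: I need to check that a continuous action of $G$ on the compact Hausdorff space $X$ is indeed an action by isomorphisms of the associated uniform structure. This is standard, since each $\alpha_g$ is a homeomorphism of the compact Hausdorff space $X$, and by the uniqueness of the compatible uniformity on a compact Hausdorff space (recalled in the excerpt via~\cite[II, \S 4.1, Theorem~1]{bourbaki}), every homeomorphism of $X$ is automatically an isomorphism of that unique uniformity. Thus each $\alpha_g$ and its inverse are uniformly continuous, so the action is by isomorphisms.

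With boundedness established from Remark~\ref{remark:vries} and the action recognised as an action by isomorphisms, Lemma~\ref{lemma:precompact} directly yields uniform equicontinuity with respect to that unique uniformity, which is exactly the assertion of the corollary. I do not expect any genuine obstacle here: the statement is essentially a formal corollary obtained by chaining Remark~\ref{remark:vries} and Lemma~\ref{lemma:precompact}. The only point demanding a sentence of justification — the mildest of the steps — is the observation that homeomorphisms of a compact Hausdorff space are uniform isomorphisms for its canonical uniformity, so that the word \emph{isomorphisms} in the hypothesis of Lemma~\ref{lemma:precompact} is satisfied. The proof is therefore a two-line deduction.
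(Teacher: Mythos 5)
Your proof is correct and matches the paper's own argument exactly: the paper's proof of this corollary is precisely the one-line deduction ``immediate consequence of Remark~\ref{remark:vries} and Lemma~\ref{lemma:precompact}.'' Your additional remark that homeomorphisms of a compact Hausdorff space are automatically uniform isomorphisms for its unique compatible uniformity is a sensible (and valid) filling-in of the detail the paper leaves implicit.
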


\begin{proof} This is an immediate consequence of Remark~\ref{remark:vries} and Lemma~\ref{lemma:precompact}. \end{proof}

\begin{remark}\label{remark:precompact.amenable} Let $G$ be a topological group. The convex subset $\mathrm{M}(\mathrm{RUCB}(G)) \subseteq \mathrm{RUCB}(G)^{\ast}$ is compact with respect to the weak-$\ast$ topology (see, e.g., \cite[2.1, Theorem~1.8(i), p.~68]{AnalysisOnSemigroups}). Furthermore, the affine action of $G$ on $\mathrm{M}(\mathrm{RUCB}(G))$ given by \begin{displaymath}
	(g\mu)(f) \, \defeq \, \mu (f \circ \lambda_{g}) \qquad (g \in G, \, \mu \in \mathrm{M}(\mathrm{RUCB}(G)), \, f \in \mathrm{RUCB}(G))
\end{displaymath} is continuous with respect to the weak-$\ast$ topology~\cite[Corollary~9.36(3)]{PachlBook}. Therefore, if $G$ is precompact, then this action is uniformly equicontinuous (with regard to the unique uniformity on $\mathrm{M}(\mathrm{RUCB}(G))$ inducing the weak-$\ast$ topology) by Corollary~\ref{corollary:precompact}, hence admits a fixed point by Kakutani's fixed point theorem~\cite{Kakutani38} (see~\cite[I, Theorem~5.11]{rudin} for a complete proof). This shows that every precompact topological group is necessarily amenable. \end{remark}

Before getting to skew-amenability, let us also recall the concept of amenability for topological groups. A topological group $G$ is called \emph{amenable} if every continuous action of $G$ on a non-void compact Hausdorff space admits an invariant regular Borel probability measure, which is equivalent to saying that every continuous action of $G$ by affine homeomorphisms on a non-void compact convex subset of a locally convex topological vector space has a fixed point. Due to a well-known result of Rickert~\cite[Theorem~4.2]{rickert}, a topological group $G$ is amenable if and only if $\mathrm{RUCB}(G)$ admits a left-invariant mean. For later reference, we furthermore recall that a topological group $G$ is said to be \emph{extremely amenable} if every continuous action of $G$ on a non-empty compact Hausdorff space has a fixed point. In analogy with Rickert's work, a topological group $G$ is extremely amenable if and only if $\mathrm{RUCB}(G)$ admits a left-invariant multiplicative mean. For a comprehensive account on (extreme) amenability of general topological groups, we refer to~\cite{PestovBook,GrigorchukDeLaHarpe}.

\begin{definition}[Pestov~\cite{Pestov2020}] A topological group $G$ is called \emph{skew-amenable} if $\mathrm{LUCB}(G)$ admits a left-invariant mean. \end{definition} 

Of course, for topological groups having small invariant neighborhoods (such as discrete groups), skew-amenability is clearly equivalent to amenability. And due to classical work of Greenleaf~\cite[Theorem~2.2.1]{greenleaf}, a locally compact group is skew-amenable if and only if it is amenable. Furthermore, if a topological group $G$ is \emph{B-amenable}~\cite[Definition~3.8]{GrigorchukDeLaHarpe}, i.e., the space of all continuous bounded real-valued functions on $G$ admits a left-invariant mean, then $G$ is both amenable and skew-amenable. For more information on B-amenability of topological groups (including historic background), the reader is referred to~\cite{GrigorchukDeLaHarpe}. We continue with the a basic remark.

\begin{remark}\label{remark:basic} Let $G$ be a topological group and consider the map $\iota \colon G \to G, \, x \mapsto x^{-1}$. Since \begin{displaymath}
	I \colon \, \mathrm{RUCB}(G) \, \longrightarrow \, \mathrm{LUCB}(G) , \qquad f \, \longmapsto \, f \circ \iota
\end{displaymath} is an isometric isomorphism of unital real Banach algebras and \begin{displaymath}
	\forall g \in G \ \forall f \in \mathrm{RUCB}(G) \colon \qquad I(f \circ \lambda_{g}) \, = \, I(f) \circ \rho_{g^{-1}}, \quad I(f \circ \rho_{g}) \, = \, I(f) \circ \lambda_{g^{-1}} ,
\end{displaymath} the following statements hold: \begin{itemize}
	\item[$(1)$] $G$ is amenable if and only if $\mathrm{LUCB}(G)$ admits a right-invariant mean.
	\item[$(2)$] $G$ is skew-amenable if and only if $\mathrm{RUCB}(G)$ admits a right-invariant mean.
\end{itemize} \end{remark} 

The following analogue of Rickert's fixed point theorem~\cite[Theorem~4.2]{rickert} in the context of skew-amenability has been kindly pointed out to the authors by Jan Pachl. Recall that an action of a topological group $G$ by homeomorphisms on a compact Hausdorff space $X$ is said to be \emph{slightly continuous}~\cite{day} if there exists at least one element $x \in X$ such that the map $G \to X, \, g \mapsto gx$ is continuous. Extending this terminology, an action of a topological group $G$ by homeomorphisms on a compact Hausdorff space~$X$ is called \emph{slightly left-uniformly continuous}~\cite{pachl} if there exists an element $x \in X$ such that $G \to X, \, g \mapsto gx$ is uniformly continuous with respect to the left uniformity on $G$ and the unique uniformity on $X$ compatible with its compact Hausdorff topology.

\begin{remark}[cf.~\cite{pachl}, Theorem~3.2]\label{remark:skew.rickert} A topological group $G$ is skew-amenable if and only if every slightly left-uniformly continuous action of $G$ by affine homeomorphisms compact convex subset of a locally convex topological vector space admits a fixed point. This follows from the equivalence of (i) and (iv) in~\cite[Theorem~3.2]{pachl}, evaluated for the left uniformity. \end{remark}

Let us mention the following characterization of skew-amenability by means of uniformly equicontinuous actions, which is implicit in~\cite[Sect.~3.5--3.6]{PestovBook}

\begin{prop}[\cite{PestovBook}, Sections~3.5, 3.6]\label{proposition:skew.amenability} A topological group $G$ is skew-amenable if and only if, for every uniformly equicontinuous action of $G$ on a uniform space $X$, there exists a $G$-invariant mean on~$\mathrm{UCB}(X)$. \end{prop}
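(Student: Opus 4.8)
The plan is to prove both implications, treating the forward direction (skew-amenable implies invariant means on all uniformly equicontinuous actions) as the substantial one and the reverse as an easy specialization. For the reverse implication, I would apply the hypothesis to a single canonical uniformly equicontinuous action, namely the left-translation action of $G$ on the uniform space $(G,\mathscr{E}_{\Lsh}(G))$. The key observation is that left translations $\lambda_{g}$ are uniformly equicontinuous for the left uniformity: indeed $(x,y) \in U_{\Lsh}$ means $x^{-1}y \in U$, and since $(gx)^{-1}(gy) = x^{-1}y$, each $\lambda_{g}$ preserves $U_{\Lsh}$ exactly, so $\{\lambda_{g} \mid g \in G\}$ is uniformly equicontinuous; continuity of $g \mapsto gx$ is automatic from the group topology. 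Since $\mathrm{UCB}(G,\mathscr{E}_{\Lsh}(G)) = \mathrm{LUCB}(G)$ by definition, a $G$-invariant mean on $\mathrm{UCB}(X)$ for this action is precisely a left-invariant mean on $\mathrm{LUCB}(G)$, which is skew-amenability.

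\medskip

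For the forward direction, suppose $G$ is skew-amenable and fix a uniformly equicontinuous action $\alpha \colon G \times X \to X$. The strategy is to pull the invariant mean on $\mathrm{LUCB}(G)$ back along orbit maps. Concretely, I would fix a base point $x_{0} \in X$ and consider the orbit map $\pi \colon G \to X,\, g \mapsto \alpha(g,x_{0})$, together with the induced comparison map $\pi^{\ast} \colon \mathrm{UCB}(X) \to \ell^{\infty}(G),\, f \mapsto f \circ \pi$. The crucial structural claim is that $\pi^{\ast}$ lands inside $\mathrm{LUCB}(G)$: given $f \in \mathrm{UCB}(X)$ and $\epsilon > 0$, uniform continuity of $f$ yields an entourage $E$ of $X$ controlling the oscillation of $f$ by $\epsilon$; uniform equicontinuity of $\{\alpha_{g}\}$ then produces an entourage $F$ of $X$ with $(\alpha_{g}(x),\alpha_{g}(y)) \in E$ for all $g$ whenever $(x,y) \in F$; and continuity of the action (the pointwise-continuity clause of uniform equicontinuity) gives $U \in \mathscr{U}(G)$ with $(x_{0}, hx_{0}) \in F$ for $h \in U$. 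Then for $(g_{1},g_{2}) \in U_{\Lsh}$, writing $h \defeq g_{1}^{-1}g_{2} \in U$, one has $\pi(g_{2}) = g_{1}(h x_{0})$ and $\pi(g_{1}) = g_{1} x_{0}$, so applying $\alpha_{g_{1}}$ to the pair $(x_{0}, hx_{0}) \in F$ lands the image pair in $E$, whence $\lvert (f\circ\pi)(g_{1}) - (f\circ\pi)(g_{2}) \rvert \leq \epsilon$; this is exactly left-uniform continuity of $f \circ \pi$. With $\pi^{\ast} \colon \mathrm{UCB}(X) \to \mathrm{LUCB}(G)$ established, I would transport a left-invariant mean $m$ on $\mathrm{LUCB}(G)$ to the functional $\nu \defeq m \circ \pi^{\ast}$ on $\mathrm{UCB}(X)$; positivity and $\nu(\mathbf{1}) = 1$ are immediate, and $G$-invariance follows because $(f \circ \alpha_{g}) \circ \pi = (f \circ \pi) \circ \lambda_{g}$, so left-invariance of $m$ gives $\nu(f \circ \alpha_{g}) = \nu(f)$.

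\medskip

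I expect the main obstacle to be the verification that $\pi^{\ast}$ actually maps into $\mathrm{LUCB}(G)$ rather than merely into $\ell^{\infty}(G)$, since this is where the two defining clauses of uniform equicontinuity—uniform equicontinuity of the family $\{\alpha_g\}$ and continuity of the action—must be combined in the correct order through the $\epsilon$--entourage bookkeeping; getting the quantifiers and the left-translation identity $(g_{1}x)^{-1}$-style manipulation aligned is the delicate point. One subtlety worth flagging is whether $\mathrm{UCB}(X)$ separates points of $X$ (needed only if one wishes to view $\Delta(X)$ inside $\mathrm{M}(\mathrm{UCB}(X))$); this is not actually required for the argument above, since the mean is produced purely by transport from $G$ and no appeal to density of point masses on $X$ is made. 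A final remark: the argument uses only a single orbit, which suffices precisely because invariance of $\nu$ is inherited clause-by-clause from $m$; no gluing across orbits is needed.
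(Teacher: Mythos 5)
Your proposal is correct and takes essentially the same approach as the paper: the reverse direction specializes the hypothesis to the left-translation action of $G$ on $(G,\mathscr{E}_{\Lsh}(G))$, and the forward direction transports a left-invariant mean on $\mathrm{LUCB}(G)$ along the orbit map $g \mapsto \alpha(g,x_{0})$, which is precisely the operator $\Phi$ used in the paper's proof. The only difference is that the paper defers the verification that this operator is well defined (i.e., lands in $\mathrm{LUCB}(G)$) and equivariant to \cite[Lemma~3.6.5]{PestovBook}, whereas you carry out that entourage bookkeeping explicitly, and correctly.
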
 

\begin{proof} ($\Longleftarrow$) This is due to the fact that the action $G \times X \to X, \, (g,x) \mapsto \lambda_{g}(x) = gx$ is uniformly equicontinuous with respect to the left uniformity on $X \defeq G$.

($\Longrightarrow$) Let $\alpha$ be a uniformly equicontinuous action of a skew-amenable topological group $G$ on a uniform space $X$. Pick any point $x \in X$. A straightforward argument shows that \begin{displaymath}
	\Phi \colon \, \mathrm{UCB}(X) \, \longrightarrow \, \mathrm{LUCB}(G), \qquad f \, \longmapsto \, \bigl(g \mapsto f(\alpha(g,x))\bigr)
\end{displaymath} constitutes a well-defined, positive, unital linear operator satisfying $\Phi (f \circ \alpha_{g}) = \Phi(f) \circ \lambda_{g}$ for all $f \in \mathrm{UCB}(X)$ and $g \in G$. (We refer to~\cite[Lemma~3.6.5]{PestovBook} for a detailed exposition.) Consequently, if $\mu \colon \mathrm{LUCB}(G) \to \mathbb{R}$ is any left-invariant mean, then $\mu \circ \Phi \colon \mathrm{UCB}(X) \to \mathbb{R}$ will be an $\alpha$-invariant mean. \end{proof}

Since any continuous action of a topological group on a discrete topological space $X$ is necessarily uniformly equicontinuous with respect to the discrete uniformity on $X$, Proposition~\ref{proposition:skew.amenability} readily entails the following.

\begin{cor}\label{corollary:discrete.skew.amenability} Let $G$ be a skew-amenable topological group. If $G$ acts continuously on a discrete topological space~$X$, then there exists a $G$-invariant mean on~$\ell^{\infty}(X)$. \end{cor}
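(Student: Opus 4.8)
The plan is to reduce the statement directly to Proposition~\ref{proposition:skew.amenability} by regarding $X$ as a \emph{discrete uniform space}, i.e., equipping $X$ with the uniformity whose entourages are exactly the supersets of the diagonal $\{(x,x) \mid x \in X\}$. This uniformity induces the discrete topology on $X$, so a continuous action of $G$ on the discrete topological space $X$ is the same thing as a continuous action of $G$ on this discrete uniform space. Once this identification is in place, the whole content of the corollary is to feed the action into Proposition~\ref{proposition:skew.amenability}.

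First I would check that the given action $\alpha \colon G \times X \to X$ is uniformly equicontinuous in the sense defined above. For the first condition, uniform equicontinuity of $\{\alpha_{g} \mid g \in G\}$, one uses that the diagonal is itself an entourage of the discrete uniformity: given any entourage $F$ of $X$, the pair $(x,y)$ lying in the diagonal forces $x=y$, hence $(\alpha_{g}(x),\alpha_{g}(y)) = (\alpha_{g}(x),\alpha_{g}(x)) \in F$ for every $g \in G$, so this condition holds vacuously (as already remarked, it is void on discrete uniform spaces). For the second condition, continuity of each orbit map $g \mapsto \alpha(g,x)$ is immediate from continuity of $\alpha$ upon restricting to $G \times \{x\}$. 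Next I would identify $\mathrm{UCB}(X)$ with $\ell^{\infty}(X)$: on a discrete uniform space every real-valued function is uniformly continuous, since for any $\epsilon > 0$ the diagonal serves as an entourage $E$ with $\vert f(x)-f(y)\vert = 0 \leq \epsilon$ for all $(x,y) \in E$; thus the space of uniformly continuous bounded functions coincides with the space of all bounded functions.

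Finally, since $G$ is skew-amenable and $\alpha$ is uniformly equicontinuous, Proposition~\ref{proposition:skew.amenability} furnishes a $G$-invariant mean on $\mathrm{UCB}(X) = \ell^{\infty}(X)$, which is precisely the asserted conclusion. I do not expect any genuine obstacle here: the argument is a direct specialization of the preceding proposition, and the only points requiring (routine) care are the two verifications above, namely uniform equicontinuity with respect to the discrete uniformity and the identification $\mathrm{UCB}(X) = \ell^{\infty}(X)$, both of which rest on the single observation that the diagonal is an entourage of the discrete uniformity.
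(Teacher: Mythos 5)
Your proposal is correct and is essentially the paper's own argument: the paper derives the corollary in one line by noting that any continuous action on a discrete space is uniformly equicontinuous with respect to the discrete uniformity, whereupon Proposition~\ref{proposition:skew.amenability} applies with $\mathrm{UCB}(X) = \ell^{\infty}(X)$. Your two routine verifications (vacuity of uniform equicontinuity via the diagonal entourage, and the identification of uniformly continuous bounded functions with all bounded functions) simply spell out what the paper leaves implicit.
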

 
Our next purpose is to provide a convenient characterization of skew-amenability for topological groups of isometries. Given a metric space $X$, we consider the topological group~$\mathrm{Iso}(X)$, consisting of all isometries of $X$, endowed with the topology of pointwise convergence with respect to the metric on $X$. We recall that, if $G$ is a topological subgroup of $\mathrm{Iso}(X)$ for some metric space $X$, then the sets of the form \begin{displaymath}
	U_{G}(F,\epsilon) \, \defeq \, \{ g \in G \mid \forall x \in F \colon \, d_{X}(x,gx) < \epsilon \} \qquad (F \in \Pfin (X), \, \epsilon > 0)
\end{displaymath} constitute a neighborhood basis of the identity in $G$. For such groups, skew-amenability may be detected as described in Proposition~\ref{proposition:isometry.group}. As usual, if $G$ is a group acting on a set $X$, then for any $n \in \mathbb{N}$ we may consider the induced action of $G$ on $X^{n}$ given by $gx \defeq (gx_{0},\ldots,gx_{n-1})$ for all $g \in G$ and $x = (x_{0},\ldots,x_{n-1}) \in X^{n}$.

\begin{prop}\label{proposition:isometry.group} Let $X$ be a metric space and let $G$ be a topological subgroup of $\mathrm{Iso}(X)$. Then $G$ is skew-amenable if and only if, for every $n \in \mathbb{N}$ and every $x \in X^{n}$, there exists a $G$-invariant mean on~$\mathrm{UCB}(Gx)$. \end{prop}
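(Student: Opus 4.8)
The plan is to treat the two implications separately, the forward one being a quick consequence of Proposition~\ref{proposition:skew.amenability} and the backward one resting on Lemma~\ref{lemma:directed.union.amenable}. Throughout I would equip $X^{n}$ with the supremum metric $d_{X^{n}}(u,v) \defeq \max_{i<n} d_{X}(u_{i},v_{i})$, so that the diagonal $G$-action on $X^{n}$ is again by isometries, and I regard each orbit $Gx$ (for $x \in X^{n}$) as a metric subspace of $X^{n}$. For the implication $(\Longrightarrow)$, I observe that the $G$-action on $Gx$ is uniformly equicontinuous: the maps $g \mapsto g(\cdot)$ are isometries of $Gx$, and for each $p = hx \in Gx$ the orbit map $g \mapsto gp = (gh)x$ is continuous because $G \leq \mathrm{Iso}(X)$ carries the topology of pointwise convergence. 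Skew-amenability of $G$ then yields a $G$-invariant mean on $\mathrm{UCB}(Gx)$ directly from Proposition~\ref{proposition:skew.amenability}.

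The substance lies in $(\Longleftarrow)$. For $x \in X^{n}$ consider the orbit map $\pi_{x} \colon G \to Gx, \, g \mapsto gx$. Since $g^{-1}h \in U_{G}(\{x_{0},\dots,x_{n-1}\},\delta)$ holds precisely when $d_{X^{n}}(\pi_{x}(g),\pi_{x}(h)) < \delta$, the map $\pi_{x}$ is uniformly continuous from $(G,\mathscr{E}_{\Lsh}(G))$ onto $Gx$ and exhibits $Gx$ as a uniform quotient of $G$. Consequently $\Phi_{x} \colon \mathrm{UCB}(Gx) \to \mathrm{LUCB}(G), \, \psi \mapsto \psi \circ \pi_{x}$ is a well-defined, positive, unital linear map; as $\pi_{x}$ is surjective, $\Phi_{x}$ is in fact an order isomorphism onto its image $A_{x} \defeq \Phi_{x}(\mathrm{UCB}(Gx))$, and the relation $\pi_{x}(\lambda_{g}(h)) = (gh)x = g(hx)$ shows both that $A_{x}$ is a left-invariant linear subspace of $\mathrm{LUCB}(G)$ containing $\mathbf{1}$ and that $\Phi_{x}$ intertwines the relevant $G$-actions. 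Hence, feeding a $G$-invariant mean $\nu_{x}$ on $\mathrm{UCB}(Gx)$ (supplied by the hypothesis) through $\Phi_{x}^{-1}$ produces a left-invariant mean $\nu_{x} \circ \Phi_{x}^{-1}$ on $A_{x}$.

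Next I would check that the family $\{A_{x} \mid x \in \bigcup_{n} X^{n}\}$ is upward directed: concatenating tuples $x$ and $y$ to $z \defeq (x,y)$ and using that the coordinate projection $Gz \to Gx$ is $1$-Lipschitz (hence uniformly continuous) gives $A_{x}, A_{y} \subseteq A_{z}$. It then suffices, by Lemma~\ref{lemma:directed.union.amenable} applied to the left-translation action of $G$ on itself, to prove that $\bigcup_{x} A_{x}$ is norm-dense in $\mathrm{LUCB}(G)$; the closure then equals $\mathrm{LUCB}(G)$ and inherits a left-invariant mean, which is exactly skew-amenability.

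This density is the step I expect to be the main obstacle. Given $f \in \mathrm{LUCB}(G)$ and $\epsilon > 0$, left-uniform continuity furnishes a finite $F = \{x_{0},\dots,x_{n-1}\}$ and $\delta > 0$ such that $\vert f(g) - f(h) \vert \leq \epsilon$ whenever $d_{X^{n}}(gx,hx) < \delta$, where $x$ enumerates $F$. Setting $\theta(p) \defeq \inf \{ f(g) \mid \pi_{x}(g) = p \}$ defines a bounded function on $Gx$ with $\vert f(g) - \theta(\pi_{x}(g)) \vert \leq \epsilon$ and $\vert \theta(p) - \theta(q) \vert \leq \epsilon$ for $d_{X^{n}}(p,q) < \delta$. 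The delicate point is that this naive descent $\theta$ need \emph{not} be uniformly continuous, since left-uniform continuity of $f$ at finer scales would require enlarging $F$; the coordinates of $x$ only control the single scale $(\delta,\epsilon)$. I would repair this by the McShane-type regularization $\psi(p) \defeq \inf_{q \in Gx} \bigl[ \theta(q) + L\, d_{X^{n}}(p,q) \bigr]$ with the deliberately large Lipschitz constant $L \defeq 2\delta^{-1}\Vert f \Vert_{\infty}$. A short estimate (splitting according to whether $d_{X^{n}}(p,q)$ is below or above $\delta$, and using $\theta(p) - \theta(q) \leq 2\Vert f\Vert_{\infty} \leq L\delta$ in the latter case) shows $\theta - \epsilon \leq \psi \leq \theta$, so that $\psi \in \mathrm{UCB}(Gx)$ and $\Vert f - \Phi_{x}(\psi) \Vert_{\infty} \leq 2\epsilon$. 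As $\epsilon$ was arbitrary, $f$ lies in the norm-closure of $\bigcup_{x} A_{x}$, which completes the verification and hence, through Lemma~\ref{lemma:directed.union.amenable}, the proof.
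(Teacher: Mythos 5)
Your proposal is correct and follows the paper's route exactly: the forward implication is read off from Proposition~\ref{proposition:skew.amenability}, and the converse applies Lemma~\ref{lemma:directed.union.amenable} to the upward directed family of left-invariant subspaces $A_{x} = \{ \psi \circ \pi_{x} \mid \psi \in \mathrm{UCB}(Gx) \}$ of $\mathrm{LUCB}(G)$, which is precisely the family $\mathscr{A}$ in the paper's proof. The only divergence is that the paper obtains norm-density of $\bigcup_{x} A_{x}$ in $\mathrm{LUCB}(G)$ by citing Lemma~\ref{lemma:isometry.groups} (that is, \cite[Lemma~5.3]{SchneiderThomRandomWalks} combined with Remark~\ref{remark:basic}), whereas you prove it directly, and your argument is sound: the descended function $\theta(p) = \inf\{ f(g) \mid gx = p \}$ satisfies $\vert \theta(p) - \theta(q) \vert \leq \epsilon$ whenever $d_{X^{n}}(p,q) < \delta$, and the McShane-type regularization $\psi(p) = \inf_{q}\bigl[\theta(q) + L\, d_{X^{n}}(p,q)\bigr]$ with $L = 2\delta^{-1}\Vert f \Vert_{\infty}$ is $L$-Lipschitz and pinched between $\theta - \epsilon$ and $\theta$ (the case split at distance $\delta$ works because $L\delta = 2\Vert f \Vert_{\infty}$ dominates any oscillation of $\theta$), giving $\Vert f - \psi \circ \pi_{x} \Vert_{\infty} \leq 2\epsilon$, so your write-up is self-contained where the paper's relies on an external reference.
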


To prove Proposition~\ref{proposition:isometry.group}, we recall a well-known description of the algebra of left-uniformly continuous bounded real-valued functions on an arbitrary isometry group. Given a group $G$ acting on a set $X$, we will let ${f\!\!\upharpoonright_{x}} \colon G \to \mathbb{R}, \, g \mapsto f(gx)$ for any $x \in X$ and $f \in \mathbb{R}^{X}$.

\begin{lem}[see, e.g., \cite{SchneiderThomRandomWalks}, Lemma~5.3]\label{lemma:isometry.groups} Let $X$ be a metric space. If $G$ is any topological subgroup of $\mathrm{Iso}(X)$, then the linear subspace \begin{displaymath}
	\left\{ {f\!\!\upharpoonright_{x}} \left\vert \, n \in \mathbb{N}, \, x \in X^{n}, \, f \in \mathrm{UCB}(Gx) \right\} \right.
\end{displaymath} is norm-dense in $\mathrm{LUCB}(G)$. \end{lem}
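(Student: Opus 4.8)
The plan is to prove both that the displayed set sits inside $\mathrm{LUCB}(G)$ and that it is norm-dense, the latter being the real content. Throughout I would equip $X^{n}$ with the max-metric $d_{n}(y,z) \defeq \max_{0 \le i < n} d_{X}(y_{i},z_{i})$, so that $Gx \subseteq X^{n}$ carries the corresponding subspace uniformity and $\mathrm{UCB}(Gx)$ is the algebra of bounded $d_{n}$-uniformly continuous functions on the orbit. The structural observation underpinning everything is that, since each $g \in G$ acts as an isometry, one has $g^{-1}h \in U_{G}(\{p_{0},\ldots,p_{n-1}\},\delta)$ if and only if $d_{X}(gp_{i},hp_{i}) < \delta$ for all $i$, i.e.\ if and only if $d_{n}(gx,hx) < \delta$, where $x \defeq (p_{0},\ldots,p_{n-1})$. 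Equivalently, $\mathscr{E}_{\Lsh}(G)$ is exactly the initial uniformity induced by the orbit maps $G \to X,\ g \mapsto gp$ ($p \in X$). For containment this gives, for $f \in \mathrm{UCB}(Gx)$, that $\vert ({f\!\!\upharpoonright_{x}})(g)\vert = \vert f(gx)\vert \le \Vert f\Vert_{\infty}$ and that any modulus of uniform continuity of $f$ transfers verbatim into a modulus of left-uniform continuity of ${f\!\!\upharpoonright_{x}}$, so ${f\!\!\upharpoonright_{x}} \in \mathrm{LUCB}(G)$.

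For density, since $\mathrm{LUCB}(G)$ is norm-closed, it suffices to approximate a fixed $\phi \in \mathrm{LUCB}(G)$ to within an arbitrary $\epsilon > 0$ by a single ${f\!\!\upharpoonright_{x}}$. Writing $M \defeq \Vert\phi\Vert_{\infty}$ and using left-uniform continuity together with the identification of $\mathscr{E}_{\Lsh}(G)$ above, I would fix a finite set $\{p_{0},\ldots,p_{n-1}\} \subseteq X$ and a $\delta > 0$ with
\begin{equation*}
 d_{X}(gp_{i},hp_{i}) < \delta \ (0 \le i < n) \ \Longrightarrow \ \vert \phi(g) - \phi(h)\vert \le \epsilon \qquad (g,h \in G), \tag{$\ast$}
\end{equation*}
and set $x \defeq (p_{0},\ldots,p_{n-1}) \in X^{n}$, so that the goal becomes an $f \in \mathrm{UCB}(Gx)$ with $f(gx)$ within $\epsilon$ of $\phi(g)$ for every $g$.

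The main obstacle is precisely that the naive candidate—define $f$ on $Gx$ by transporting the value of $\phi$ along a chosen preimage of $g \mapsto gx$—is only $\epsilon$-constant on $\delta$-balls by $(\ast)$ and need not be uniformly continuous at finer scales, the tuple $x$ being \emph{finite} and fixed. I would circumvent this with a McShane-type inf-convolution that builds in genuine Lipschitz continuity at a single stroke: putting $L \defeq 2M/\delta$, define
\begin{equation*}
 f \colon Gx \longrightarrow \mathbb{R}, \qquad f(z) \defeq \inf_{g \in G}\bigl(\phi(g) + L\, d_{n}(gx,z)\bigr).
\end{equation*}
As an infimum of $L$-Lipschitz functions of $z$ this $f$ is $L$-Lipschitz, hence uniformly continuous; and from $\phi \ge -M$ together with the admissible choice $g$ satisfying $gx = z$ (giving $f(z) \le \phi(g) \le M$) one gets $\Vert f\Vert_{\infty} \le M$, so $f \in \mathrm{UCB}(Gx)$. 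Finally I would estimate $f(g_{0}x)$ against $\phi(g_{0})$: taking $g = g_{0}$ in the infimum yields $f(g_{0}x) \le \phi(g_{0})$, while for the reverse bound I split over $g$—if $d_{n}(gx,g_{0}x) < \delta$ then $(\ast)$ gives $\phi(g_{0}) - \phi(g) \le \epsilon \le L\,d_{n}(gx,g_{0}x) + \epsilon$, and if $d_{n}(gx,g_{0}x) \ge \delta$ then $\phi(g_{0}) - \phi(g) \le 2M = L\delta \le L\,d_{n}(gx,g_{0}x) + \epsilon$—so in either case $\phi(g) + L\,d_{n}(gx,g_{0}x) \ge \phi(g_{0}) - \epsilon$, whence $f(g_{0}x) \ge \phi(g_{0}) - \epsilon$. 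Thus $\Vert \phi - {f\!\!\upharpoonright_{x}}\Vert_{\infty} \le \epsilon$, and letting $\epsilon \to 0$ proves density. The only calibrated (rather than purely formal) ingredient is the slope $L = 2M/\delta$, chosen so that the trivial global oscillation bound $2M$ on $\phi$ is absorbed by the Lipschitz penalty exactly at scale $\delta$.
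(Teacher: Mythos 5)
Your proof is correct, and it takes a genuinely different route from the paper, which does not argue directly at all: the paper's entire proof is the observation that the statement follows from the cited result \cite[Lemma~5.3]{SchneiderThomRandomWalks} (formulated there for the right uniformity) together with the inversion isomorphism $f \mapsto f \circ \iota$ of Remark~\ref{remark:basic}, which swaps $\mathrm{RUCB}(G)$ and $\mathrm{LUCB}(G)$ and converts the orbit maps accordingly. Your argument replaces this citation with a self-contained proof: the identification of $\mathscr{E}_{\Lsh}(G)$ as the initial uniformity of the orbit maps $g \mapsto gp$ is the correct mechanism (and is also what underlies the cited lemma), the verification of $(\ast)$ from the neighborhood basis $U_{G}(F,\delta)$ is sound, and the McShane inf-convolution $f(z) = \inf_{g}\bigl(\phi(g) + L\,d_{n}(gx,z)\bigr)$ with $L = 2M/\delta$ cleanly resolves the one real difficulty, namely that $\phi$ need not factor through $g \mapsto gx$ and is only $\epsilon$-constant on fibers and $\delta$-balls; your two-case estimate showing $f(g_{0}x) \geq \phi(g_{0}) - \epsilon$ is exactly calibrated and correct (note the infimum is finite since $\phi \geq -M$, so the standard fact that an infimum of $L$-Lipschitz functions is $L$-Lipschitz applies). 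What your route buys is independence from the literature plus a quantitative refinement — the approximants can be taken Lipschitz on the orbit $Gx$, not merely uniformly continuous — at the cost of the page of computation the paper avoids. One small omission if your proof is to stand alone: the statement also asserts that the displayed set is a \emph{linear subspace}, and this closure property (really, upward directedness over tuples) is what the paper later feeds into Lemma~\ref{lemma:directed.union.amenable} in the proof of Proposition~\ref{proposition:isometry.group}; it follows in one line from your setup, since a tuple $x \in X^{n}$ may be extended to $(x,y) \in X^{n+m}$ and any $f \in \mathrm{UCB}(Gx)$ pulled back along the $1$-Lipschitz coordinate projection $G(x,y) \to Gx$ without changing ${f\!\!\upharpoonright_{x}}$, but it deserves a sentence.
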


\begin{proof} This follows from~\cite[Lemma~5.3]{SchneiderThomRandomWalks} and Remark~\ref{remark:basic}. \end{proof}

Combining Lemma~\ref{lemma:isometry.groups} with Lemma~\ref{lemma:directed.union.amenable}, we can now prove Proposition~\ref{proposition:isometry.group}.

\begin{proof}[Proof of Proposition~\ref{proposition:isometry.group}] ($\Longrightarrow$) This is an immediate consequence of Proposition~\ref{proposition:skew.amenability}.
	
($\Longleftarrow$) Note that \begin{displaymath}
	\left. \mathscr{A} \, \defeq \, \left\{ \{ {f\!\!\upharpoonright_{x}} \mid f \in \mathrm{UCB}(Gx) \} \, \right\vert n \in \mathbb{N}, \, x \in X^{n} \right\} 
\end{displaymath} constitutes an upward directed set of linear subspaces of $\mathrm{LUCB}(G)$ such that $\mathbf{1} \in \bigcap \mathscr{A}$. Due to Lemma~\ref{lemma:isometry.groups}, thus $\bigcup \mathscr{A}$ is norm-dense in $\mathrm{LUCB}(G)$. By hypothesis, every member of $\mathscr{A}$ admits a left-invariant mean. Thus, $G$ is skew-amenable by Lemma~\ref{lemma:directed.union.amenable}. \end{proof}

Specializing Proposition~\ref{proposition:isometry.group} to the discrete setting, we obtain the subsequent corollaries. Given a set $X$, we consider the topological group $\mathrm{Sym}(X)$, the full symmetric group over $X$, endowed with the topology of pointwise convergence associated with the discrete topology on~$X$. Let us recall that, if $G$ is a topological subgroup of $\mathrm{Sym}(X)$ for some set $X$, then the subgroups of the form \begin{displaymath}
	U_{G}(F) \, \defeq \, \{ g \in G \mid \forall x \in F \colon \, gx = x \} \qquad (F \in \Pfin (X))
\end{displaymath} constitute a neighborhood basis of the identity in $G$.

\begin{cor}\label{corollary1} Let $X$ be a set. A topological subgroup $G$ of $\mathrm{Sym}(X)$ is skew-amenable if and only if, for every $n \in \mathbb{N}$ and every $x \in X^{n}$, there exists a $G$-invariant mean on~$\ell^{\infty}(Gx)$. \end{cor}

\begin{proof} This is an immediate consequence of Proposition~\ref{proposition:isometry.group}. \end{proof}

If $G$ is a group acting on a set $X$, then we may consider the induced action of $G$ on $\mathscr{P}(X)$ given by $gS \defeq \{ gx \mid x \in S \}$ for all $g \in G$ and $S \in \mathscr{P}(X)$.

\begin{cor}\label{corollary2} Let $(X,{\leq})$ be a linearly ordered set. A topological subgroup $G$ of $\mathrm{Aut}(X,{\leq})$ is skew-amenable if and only if, for every finite subset $S \subseteq X$, there exists a $G$-invariant mean on~$\ell^{\infty}(\{ gS \mid g \in G \})$. \end{cor}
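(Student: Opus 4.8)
The plan is to derive Corollary~\ref{corollary2} from Corollary~\ref{corollary1} by showing that, for a group of order-automorphisms, the orbit of a tuple is $G$-equivariantly identical to the orbit of its underlying set; this reduces the ``finite subsets'' condition to the ``tuples'' condition already known to characterise skew-amenability.

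Concretely, fix $n \in \mathbb{N}$ and $x = (x_{0}, \ldots, x_{n-1}) \in X^{n}$, and let $S \defeq \{x_{0}, \ldots, x_{n-1}\}$ be its underlying finite set. I would define $\psi \colon Gx \to \{gS \mid g \in G\}$ by $\psi(gx) \defeq gS$. First I would check well-definedness and surjectivity: if $gx = hx$, then $gx_{i} = hx_{i}$ for every $i$, whence $gS = \{gx_{0},\ldots,gx_{n-1}\} = \{hx_{0},\ldots,hx_{n-1}\} = hS$, and surjectivity is immediate. Equivariance $\psi(hy) = h\psi(y)$ follows from $\psi((hg)x) = (hg)S = h(gS)$.

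The one substantive point is injectivity, and this is exactly where $G \leq \mathrm{Aut}(X,{\leq})$ enters. Suppose $gS = hS$, i.e.\ $h^{-1}g S = S$. Then $h^{-1}g$ restricts to an order-preserving bijection of the finite linearly ordered set $S$; since an order-preserving permutation of a finite chain is necessarily the identity, $h^{-1}g$ fixes $S$ pointwise, so $h^{-1}gx_{i} = x_{i}$ for all $i$ and thus $gx = hx$. Hence $\psi$ is a $G$-equivariant bijection. Note that this argument makes no use of $x$ being injective or increasing, so tuples with repeated entries are handled uniformly.

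Transfer of means then finishes the argument: $\psi$ induces the $G$-equivariant isometric algebra isomorphism $\ell^{\infty}(\{gS \mid g \in G\}) \to \ell^{\infty}(Gx), \, f \mapsto f \circ \psi$, so one of these admits a $G$-invariant mean if and only if the other does. Since every finite $S \subseteq X$ arises as the underlying set of some tuple (e.g.\ its increasing enumeration) and, conversely, every tuple has an underlying finite set, the conditions ``$\ell^{\infty}(Gx)$ has a $G$-invariant mean for all $x \in X^{n}$, $n \in \mathbb{N}$'' and ``$\ell^{\infty}(\{gS \mid g \in G\})$ has a $G$-invariant mean for all finite $S \subseteq X$'' are equivalent; by Corollary~\ref{corollary1}, both are equivalent to skew-amenability of $G$. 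I do not anticipate a genuine obstacle here: the crux is the finite-chain rigidity used for injectivity, together with the bookkeeping that $\psi$ intertwines the two $G$-actions so that invariance of means is preserved under the induced isomorphism.
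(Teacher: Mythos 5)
Your proposal is correct and follows essentially the same route as the paper: the paper likewise reduces to Corollary~\ref{corollary1} via the $G$-equivariant bijection $Gx \to G\{x_{0},\ldots,x_{n-1}\}$, $(y_{0},\ldots,y_{n-1}) \mapsto \{y_{0},\ldots,y_{n-1}\}$, whose existence it attributes to the $G$-invariance of the linear order. The only difference is that you spell out the injectivity (via rigidity of order-preserving permutations of a finite chain) and the transfer of means, both of which the paper leaves implicit.
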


\begin{proof} Since $X$ admits a $G$-invariant linear order, for all $n \in \mathbb{N}$ and $(x_{0},\ldots,x_{n-1}) \in X^{n}$ there is a $G$-equivariant bijection given by \begin{displaymath}
	Gx \, \longrightarrow \, G \{ x_{0},\ldots,x_{n-1} \} , \qquad (y_{0},\ldots,y_{n-1}) \, \longmapsto \, \{ y_{0},\ldots,y_{n-1} \} .
\end{displaymath} Hence, the desired statement is a consequence of Corollary~\ref{corollary1}. \end{proof}
 
\section{The connection with extensive amenability}\label{section:extensive}
 
The purpose of this section if to clarify the connection between skew-amenability of topological groups and the concept of \emph{extensive amenability} of group actions coined by Juschenko, Matte Bon, Monod and de la Salle~\cite{JMMS} based on~\cite{JuschenkoMonod}. We will adopt the categorical approach developed in~\cite{JMMS}.  To this end, let us consider the category $\mathbf{I}$ of finite sets with injective maps as morphisms, as well as the category $\mathbf{Grp}$ of groups with homomorphisms as morphisms. Since the latter category has direct limits, any functor $F \colon \mathbf{I} \to \mathbf{Grp}$ may be extended to a functor $\overline{F}$ from the category of all sets with injective maps to $\mathbf{Grp}$ by setting \begin{displaymath}
	\overline{F}(X) \, \defeq \, \varinjlim\nolimits_{Y \in \Pfin (X)} F(Y)
\end{displaymath} for every set $X$. Appealing to this construction, a functor $F \colon \mathbf{I} \to \mathbf{C}$ is said to be \emph{tight} on a (possibly infinite) set $X$ if, for every\footnote{in case $X$ is non-empty, $F$ is tight on $X$ if and only if this assertion holds for \emph{some} $x \in X$} $x \in X$, the morphism $\overline{F}(\iota) \colon \overline{F}(X \setminus \{ x \}) \to \overline{F}(X)$ induced by the injection $\iota \colon X \setminus \{ x\} \to X, \, y \mapsto y$ is not surjective.
 
\begin{exmpl}\label{example:functor} An important instance of a functor resulting from the described extension construction is the one sending every set $X$ to the group \begin{displaymath}
	H^{(X)} \, \defeq \, \bigoplus\nolimits_{X} H \, = \, \! \left. \left\{ f \in H^{X} \, \right\vert f^{-1}(H\setminus \{ e\}) \text{ finite} \right\} \! ,
\end{displaymath} where $H$ is some fixed group. In this case, tightness of the underlying functor on any non-empty set is equivalent to non-triviality of $H$. \end{exmpl}
 
Let us address some continuity matters related to the construction above.

\begin{lem}\label{lemma:functorial.continuity} Consider a functor $F \colon \mathbf{I} \to \mathbf{Grp}$. Furthermore, let $X$ be a set and let $G$ be a topological subgroup of $\mathrm{Sym}(X)$. Then \begin{displaymath}
	\tau \colon \, G \times \overline{F}(X) \, \longrightarrow \, \overline{F}(X) , \quad (g,f) \, \longmapsto \, \overline{F}(g)(f)
\end{displaymath} is a continuous action of $G$ by automorphisms on the discrete topological group $\overline{F}(X)$. \end{lem}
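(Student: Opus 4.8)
The plan is to prove continuity of $\tau$ at every point $(g_0, f_0) \in G \times \overline{F}(X)$, exploiting that $\overline{F}(X)$ carries the discrete topology. Since the target is discrete, continuity at $(g_0,f_0)$ amounts to finding a neighbourhood of $(g_0,f_0)$ on which $\tau$ is constantly equal to $\overline{F}(g_0)(f_0)$. Because $\overline{F}(X)$ is discrete, it suffices to produce $U \in \mathscr{U}(G)$ and the singleton neighbourhood $\{f_0\}$ of $f_0$ such that $\overline{F}(g g_0)(f_0) = \overline{F}(g_0)(f_0)$ for all $g \in U$; equivalently, after translating, it suffices to show that the stabiliser-type neighbourhood of the identity acts trivially on the fixed element $f_0$. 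The verification that $\tau$ is an action by group automorphisms is purely algebraic and follows from functoriality of $\overline{F}$ together with the fact that $g \mapsto \overline{F}(g)$ is a homomorphism $\mathrm{Sym}(X) \to \mathrm{Aut}(\overline{F}(X))$, so I would dispose of it in one sentence and concentrate on continuity.

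\textbf{The key mechanism.} The heart of the matter is the compatibility of the direct-limit construction with the neighbourhood basis $U_G(F)$, $F \in \Pfin(X)$, of the identity in $G \leq \mathrm{Sym}(X)$. Fix $f_0 \in \overline{F}(X) = \varinjlim_{Y \in \Pfin(X)} F(Y)$. By the definition of the direct limit, $f_0$ lies in the image of the canonical map $F(Y_0) \to \overline{F}(X)$ for some finite $Y_0 \in \Pfin(X)$; write $f_0 = \overline{F}(\iota_{Y_0})(\tilde f_0)$ for a suitable $\tilde f_0 \in F(Y_0)$, where $\iota_{Y_0}\colon Y_0 \hookrightarrow X$ is the inclusion. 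The crucial observation is that if $g \in U_G(Y_0)$, i.e.\ $g$ fixes $Y_0$ pointwise, then $g \circ \iota_{Y_0} = \iota_{Y_0}$ as morphisms in the category of sets with injective maps; applying $\overline{F}$ and using functoriality gives $\overline{F}(g)(f_0) = \overline{F}(g)\overline{F}(\iota_{Y_0})(\tilde f_0) = \overline{F}(g \circ \iota_{Y_0})(\tilde f_0) = \overline{F}(\iota_{Y_0})(\tilde f_0) = f_0$. Thus $U_G(Y_0)$ fixes $f_0$.

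\textbf{Assembling the argument.} Given this, continuity at an arbitrary $(g_0,f_0)$ follows: for every $g \in g_0\, U_G(Y_0)$ we have $g = g_0 h$ with $h \in U_G(Y_0)$, whence $\overline{F}(g)(f_0) = \overline{F}(g_0)\overline{F}(h)(f_0) = \overline{F}(g_0)(f_0)$ by the previous paragraph. Therefore $\tau$ is constant, equal to $\overline{F}(g_0)(f_0)$, on the neighbourhood $g_0\,U_G(Y_0) \times \{f_0\}$ of $(g_0,f_0)$, which is precisely continuity at this point since the target is discrete. The main obstacle — really the only subtle point — is correctly unwinding the direct-limit definition of $\overline{F}(X)$ to locate the finite set $Y_0$ supporting $f_0$ and to verify the functorial identity $\overline{F}(g \circ \iota_{Y_0}) = \overline{F}(g) \circ \overline{F}(\iota_{Y_0})$ on the nose; once this bookkeeping is in place, the topology on $G$ and the discreteness of $\overline{F}(X)$ make the continuity statement essentially immediate. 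I would note that $Y_0$ can be chosen so that $g$ fixing $Y_0$ is exactly the stabiliser condition defining $U_G(Y_0)$, which is why the pointwise-convergence topology on $G \leq \mathrm{Sym}(X)$ is precisely the right topology for the action to be continuous into a discrete group.
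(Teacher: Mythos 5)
Your proposal is correct and follows essentially the same route as the paper: both arguments dispose of the action-by-automorphisms claim as purely functorial, locate a finite set $Y_0 \in \Pfin(X)$ supporting the given element of the direct limit $\overline{F}(X)$, and then exploit $g \circ \iota_{Y_0} = \iota_{Y_0}$ for $g \in U_G(Y_0)$ together with functoriality to conclude that $U_G(Y_0)$ fixes that element. The only (cosmetic) difference is that the paper packages continuity as continuity of the associated homomorphism $G \to \mathrm{Sym}\bigl(\overline{F}(X)\bigr)$ checked on basic neighbourhoods $U_{\mathrm{Sym}(\overline{F}(X))}(E)$, whereas you verify joint continuity of $\tau$ at each point directly via the translated neighbourhood $g_0\,U_G(Y_0) \times \{f_0\}$.
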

 
\begin{proof} Evidently, $\tau$ is an action of $G$ by automorphisms on $\overline{F}(X)$. In order to verify continuity, it suffices to show that the associated homomorphism $G \to \mathrm{Sym}(\overline{F}(X)), \, g \mapsto \tau_{g}$ is continuous with respect to the topology of pointwise convergence on $\mathrm{Sym}(X)$ arising from the discrete topology on $\overline{F}(X)$. For this purpose, consider any finite subset $E \subseteq \overline{F}(X)$. By construction of $\overline{F}$, we now find $Y \in \Pfin(X)$ such that $E$ is contained in the image of $\overline{F}(\iota) \colon \overline{F}(Y) \to \overline{F}(X)$ for $\iota \colon Y \to X, \, y \mapsto y$. For each $g \in U_{G}(Y)$, since $g \circ \iota = \iota$, it follows that \begin{displaymath}
	\tau_{g} \circ \overline{F}(\iota) \, = \, \overline{F}(g) \circ \overline{F}(\iota) \, = \, \overline{F}(g \circ \iota) \, = \, \overline{F}(\iota)
\end{displaymath} and thus $\tau_{g}(f) = f$ for all $f \in E$, that is, $\tau_{g} \in U_{\mathrm{Sym}(\overline{F}(X))}(E)$. This completes the proof. \end{proof}
 
\begin{remark}\label{remark:semidirect.product} (1) Let $G$ and $H$ be topological groups and let $\alpha \colon G \times H \to H$ be a continuous action of $G$ by automorphisms on $H$. Then the semi-direct product $H \rtimes_{\alpha} G$ endowed with the product topology and the usual multiplication defined by \begin{displaymath}
	(h,g) \cdot (h',g') \, \defeq \, (h\alpha_{g}(h'),gg') \qquad (h,h' \in H, \, g,g' \in G)
\end{displaymath} constitutes a topological group. Moreover, \begin{displaymath}
	\widetilde{\alpha} \colon \, (H \rtimes_{\alpha} G) \times H \, \longrightarrow \, H , \quad ((h,g),x) \, \longmapsto \, h \alpha_{g}(x)
\end{displaymath} is a continuous action of $H \rtimes_{\alpha} G$ on the topological space $H$.

(2) Consider a functor $F \colon \mathbf{I} \to \mathbf{Grp}$. Moreover, let $X$ be a set and let $G$ be a topological subgroup of $\mathrm{Sym}(X)$. By Lemma~\ref{lemma:functorial.continuity}, the map \begin{displaymath}
	\tau \colon \, G \times \overline{F}(X) \, \longrightarrow \, \overline{F}(X) , \quad (g,f) \, \longmapsto \, \overline{F}(g)(f)
\end{displaymath} is a continuous action of $G$ by automorphisms on the discrete topological group $\overline{F}(X)$. Hence, it follows by~(1) that \begin{displaymath}
	\widetilde{\tau} \colon \, \! \left( \overline{F}(X) \rtimes_{\tau} G\right) \! \times \overline{F}(X) \, \longrightarrow \, \overline{F}(X) , \quad ((f,g),f') \, \longmapsto \, f \tau_{g}(f')
\end{displaymath} is a continuous action of $\overline{F}(X) \rtimes_{\tau} G$ on the discrete space $\overline{F}(X)$.

(3) Let us unravel the above for the functor described in Example~\ref{example:functor} with respect to some fixed group $H$. To this end, let $X$ be a set and $G \leq \mathrm{Sym}(X)$. For any $g \in G$ and $f \in H^{(X)}$, we consider the element $g\boldsymbol{.}f \in H^{(X)}$ defined by \begin{displaymath}
	(g\boldsymbol{.}f)(x) \, \defeq \, f\!\left( g^{-1}x \right) \qquad (x \in X) .
\end{displaymath} Then the action $\tau$ described in Lemma~\ref{lemma:functorial.continuity} is given by $\tau(g,f) = g\boldsymbol{.}f$ for all $g \in G$, $f \in H^{(X)}$. \end{remark}

As is customary, if there is no risk of ambiguity, we may drop the direct reference to the underlying action in a semi-direct product.
 
\begin{definition}[\cite{JMMS}] An action of a group $G$ on a set $X$ is said to be \emph{extensively amenable} if $\ell^{\infty}\!\left( (\mathbb{Z}/2\mathbb{Z})^{(X)} \right)$ admits a $(\mathbb{Z}/2\mathbb{Z})^{(X)} \! \rtimes G$-invariant mean. \end{definition}

Evidently, if an action of a group $G$ on a set $X$ is extensively amenable, then so is the induced action $H \times X \to X , \, (h,x) \mapsto \phi(h)x$ for any group $H$ and any homomorphism $\phi \colon H \to G$. For a variety of non-trivial permanence properties and characterizations of extensive amenability, we refer to~\cite{JMMS}.
 
Now, let us turn to $\mathbf{Amen}$, the full subcategory of $\mathbf{Grp}$ consisting of all amenable groups, and note that $\mathbf{Amen}$ is closed under finite products and direct limits in $\mathbf{Grp}$.
 
\begin{thm}\label{theorem:extensive} Let $X$ be a set and let $G$ be a topological subgroup of $\mathrm{Sym}(X)$. Furthermore, let $F \colon \mathbf{I} \to \mathbf{Amen}$ be a functor. Then the following hold. \begin{itemize}
	\item[$(1)$] If the action of $G$ on $X$ is extensively amenable, then the topological group $\overline{F}(X) \rtimes G$ is skew-amenable.
	\item[$(2)$] Suppose that $F$ is tight on $X$. If the topological group $\overline{F}(X) \rtimes G$ is skew-amenable, then the action of $G$ on $X$ is extensively amenable.
\end{itemize} \end{thm}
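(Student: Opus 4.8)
The plan is to reduce skew-amenability of $\Gamma \defeq \overline{F}(X) \rtimes G$ to two separate ingredients: skew-amenability of $G$ itself, and the existence of a $\Gamma$-invariant mean on $\ell^{\infty}(\overline{F}(X))$ for the action $\widetilde{\tau}$ of Remark~\ref{remark:semidirect.product}(2). The bridge to extensive amenability is then the functorial analysis in~\cite{JMMS}, which relates extensive amenability of $G \curvearrowright X$ to the existence of an $(\overline{F}(X) \rtimes G)$-invariant mean on $\ell^{\infty}(\overline{F}(X))$, tightness being exactly what is needed for the nontrivial converse. The structural fact underlying the reduction is a description of $\mathrm{LUCB}(\Gamma)$: writing $N \defeq \overline{F}(X)$ (discrete), one computes the left uniformity of $N \rtimes G$ from the neighbourhood basis $\{e_{N}\} \times V$ ($V \in \mathscr{U}(G)$) and finds that a bounded $f \colon N \times G \to \mathbb{R}$ lies in $\mathrm{LUCB}(\Gamma)$ precisely when the family of sections $(f(n,{\cdot}))_{n \in N}$ is left-uniformly equicontinuous on $G$.

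For part~(1), I would first invoke~\cite{JMMS} to turn extensive amenability of $G \curvearrowright X$ into a $\Gamma$-invariant mean $m$ on $\ell^{\infty}(N)$ for $\widetilde{\tau}$; since $\widetilde{\tau}_{(m_{0},h)}(n) = m_{0}\tau_{h}(n)$, the mean $m$ is invariant both under left translations of $N$ and under the automorphisms $\tau_{h}$. Second, I would show that $G$ is itself skew-amenable. By Corollary~\ref{corollary1} this amounts to producing, for every $n \in \mathbb{N}$ and $x \in X^{n}$, a $G$-invariant mean on $\ell^{\infty}(Gx)$: extensive amenability of $G \curvearrowright X$ passes to the diagonal action on $X^{n}$, and then to the invariant subset $Gx \subseteq X^{n}$ by pushing the invariant mean forward along the $G$-equivariant coordinate-restriction homomorphism $(\mathbb{Z}/2\mathbb{Z})^{(X^{n})} \to (\mathbb{Z}/2\mathbb{Z})^{(Gx)}$, and extensive amenability of the resulting transitive action finally forces an invariant mean on $\ell^{\infty}(Gx)$. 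I would then glue the two ingredients by the averaging operator
\[
	\Psi \colon \mathrm{LUCB}(\Gamma) \longrightarrow \mathrm{LUCB}(G), \qquad \Psi(f)(g) \defeq m\bigl(n \mapsto f(n,g)\bigr),
\]
which is well defined (equicontinuity of the sections gives $|\Psi(f)(g) - \Psi(f)(g')| \leq \sup_{n}|f(n,g) - f(n,g')|$), positive, unital, and satisfies $\Psi(f \circ \lambda_{(m_{0},h)}) = \Psi(f) \circ \lambda_{h}$ by $\widetilde{\tau}$-invariance of $m$. Composing $\Psi$ with a left-invariant mean on $\mathrm{LUCB}(G)$ yields a left-invariant mean on $\mathrm{LUCB}(\Gamma)$, so $\Gamma$ is skew-amenable.

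For part~(2) the argument is shorter. Since $\widetilde{\tau}$ is a continuous action of the skew-amenable group $\Gamma$ on the discrete space $N = \overline{F}(X)$ (Lemma~\ref{lemma:functorial.continuity} and Remark~\ref{remark:semidirect.product}(2)), Corollary~\ref{corollary:discrete.skew.amenability} immediately furnishes a $\Gamma$-invariant mean on $\ell^{\infty}(\overline{F}(X))$. Feeding this into the tightness direction of the functorial characterisation in~\cite{JMMS} lets me conclude that $G \curvearrowright X$ is extensively amenable; this is exactly where tightness of $F$ on $X$ is indispensable, since without it a mean on $\ell^{\infty}(\overline{F}(X))$ records no information about $X$.

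I expect the main obstacle to lie in part~(1), in the step establishing skew-amenability of $G$. Extensive amenability only directly controls invariant means on the discrete coefficient group $\overline{F}(X)$, whereas $\mathrm{LUCB}(\Gamma)$ genuinely sees the $G$-coordinate (indeed $\mathrm{LUCB}(G)$ embeds $\Gamma$-equivariantly via $f \mapsto f \circ \pi_{G}$, so skew-amenability of $G$ is a necessary condition that must be produced from the hypothesis). Transporting extensive amenability to the orbit spaces $Gx \subseteq X^{n}$—which requires stability of extensive amenability under finite diagonal products and under passage to invariant subsets, together with the fact that a transitive extensively amenable action admits an invariant mean—is the delicate part, and it is the precise form of the product-stability statement that I would need to extract carefully from~\cite{JMMS}.
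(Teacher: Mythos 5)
Your proof is correct. Part~(2) is exactly the paper's argument: Corollary~\ref{corollary:discrete.skew.amenability} applied to $\widetilde{\tau}$, followed by the tightness direction of \cite[Theorem~1.3]{JMMS}. Part~(1), however, takes a genuinely different route. The paper never splits $\Gamma \defeq \overline{F}(X) \rtimes G$ along the extension by $\overline{F}(X)$: it instead forms the action $\sigma$ of $\Gamma$ on $Y \defeq X \times \overline{F}(X)$, shows it is extensively amenable (\cite[Corollary~2.6]{JMMS} plus precomposition with $\Gamma \to G \times \Gamma$), checks that the induced homomorphism $\Gamma \to \mathrm{Sym}(Y)$ is a topological embedding, and then verifies the orbit criterion of Corollary~\ref{corollary1} for $\Gamma$ itself, producing invariant means on diagonal orbits in $Y^{n}$ via \cite[Corollaries~2.3 and~2.6]{JMMS}. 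You apply Corollary~\ref{corollary1} only to $G$ --- your Step B is the paper's orbit argument run for $G \curvearrowright X$ instead of $\Gamma \curvearrowright Y$, so it rests on the same JMMS ingredients and is sound --- and you handle the $\overline{F}(X)$-direction with the averaging operator $\Psi$. That operator is the novelty of your argument, and it works: your description of $\mathrm{LUCB}(\Gamma)$ is correct, since the sets $\{e\} \times V$, $V \in \mathscr{U}(G)$, form a neighbourhood basis and $(n,g)^{-1}(n',g') = \left(\tau_{g^{-1}}\!\left(n^{-1}n'\right)\!, g^{-1}g'\right)$, and the equivariance $\Psi(f \circ \lambda_{(m_{0},h)}) = \Psi(f) \circ \lambda_{h}$ uses precisely the invariance of $m$ under the automorphisms $\tau_{h}$, not merely under translations of $\overline{F}(X)$. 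In effect, your gluing step is a sharpened version of Proposition~\ref{proposition:extensions} in which moderateness of the normal subgroup --- which must fail here in general, since otherwise that proposition together with amenability of the discrete group $\overline{F}(X)$ would contradict Corollary~\ref{corollary:semidirect.monod} --- is compensated by conjugation-invariance of the mean on the discrete normal subgroup. Your route makes transparent why extensive amenability, rather than skew-amenability of the two factors, is the correct hypothesis; the paper's route buys the convenience of never computing $\mathrm{LUCB}(\Gamma)$, recycling instead the permutation-group machinery of Section~\ref{section:skew.amenability}.
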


\begin{proof} We will keep the notation of Lemma~\ref{lemma:functorial.continuity} and Remark~\ref{remark:semidirect.product}.
	
(1) Assume that the action of $G$ on $X$ is extensively amenable. Let $H \defeq \overline{F}(X) \rtimes_{\tau} G$ and $Y \defeq X \times \overline{F}(X)$. Thanks to~\cite[Theorem~1.3]{JMMS}, the action \begin{displaymath}
	\widetilde{\tau} \colon \, H \times \overline{F}(X) \, \longrightarrow \, \overline{F}(X) , \quad ((f,g),f') \, \longmapsto \, f \tau_{g}(f')
\end{displaymath} is extensively amenable. It now follows by~\cite[Corollary~2.6]{JMMS} that the action \begin{displaymath}
	(G \times H) \times Y \, \longrightarrow \, Y , \quad ((g',(f,g)),(x,f')) \, \longmapsto \, \left( g'x, f \tau_{g}(f') \right)
\end{displaymath} of the group $G \times H$ on the set $Y$ is extensively amenable as well. Upon composition with the homomorphism \begin{displaymath}
	H \, \longrightarrow \, G \times H, \quad (f,g) \, \longmapsto \, (g,(f,g)) ,
\end{displaymath} we obtain an extensively amenable action \begin{displaymath}
	\sigma \colon \, H \times Y \, \longrightarrow \, Y , \quad ((f,g),(x,f')) \, \longmapsto \, \left( gx, f \tau_{g}(f') \right) .
\end{displaymath} According to Remark~\ref{remark:semidirect.product}(2), $\widetilde{\tau}$ is a continuous action of the topological group $H = \overline{F}(X) \rtimes_{\tau} G$ on the discrete topological space $\overline{F}(X)$. Therefore, since $H \to G, \, (f,g) \mapsto g$ is a continuous homomorphism, $\sigma$ constitutes a continuous action of $H$ on the discrete space $Y$, which entails the continuity of the induced homomorphism \begin{displaymath}
	\phi \colon \, H \, \longrightarrow \, \mathrm{Sym}(Y) , \quad (f,g) \, \longmapsto \, \sigma_{(f,g)}
\end{displaymath} with respect to the corresponding topology of pointwise convergence on $\mathrm{Sym}(Y)$. Moreover, $\phi$ is a topological embedding: indeed, if $E \in \Pfin (X)$, then \begin{align*}
	\phi^{-1}\!\left(U_{\mathrm{Sym}(Y)}(E \times \{ e \})\right) \, &= \, \{ (f,g) \in H \mid \forall x \in E \colon \, \phi(f,g)(x,e) = (x,e) \} \\
	&= \, \{ (f,g) \in H \mid \forall x \in E \colon \, (gx,f\tau_{g}(e)) = (x,e) \} \\
	&= \, \{ e \} \times \{ g \in G \mid \forall x \in E \colon \, gx = x \} \\
	&= \, \{ e \} \times U_{G}(E) .
\end{align*} Consequently, in order to prove skew-amenability of $H$, we may appeal to Corollary~\ref{corollary1}. To this end, let $n \in \mathbb{N}$ and $y \in Y^{n}$, and put \begin{displaymath}
	Z \, \defeq \, \{ (\sigma_{h}(y_{1}), \ldots, \sigma_{h}(y_{n})) \mid h \in H \} .
\end{displaymath} Due to~\cite[Corollary~2.6]{JMMS}, extensive amenability of $\sigma$ implies that the induced action \begin{displaymath}
	H^{n} \times Y^{n} \, \longrightarrow \, Y^{n} , \quad ((h_{1},\ldots,h_{n}),(z_{1},\ldots,z_{n})) \, \longmapsto \, (\sigma_{h_{1}}(z_{1}), \ldots, \sigma_{h_{n}}(z_{n}))
\end{displaymath} is extensively amenable. Hence, by~\cite[Corollary~2.3]{JMMS}, there exists a mean on $\ell^{\infty}(Z)$ invariant under the action \begin{displaymath}
	H \times Z \, \longrightarrow \, Z , \quad (h,(z_{1},\ldots,z_{n})) \, \longmapsto \, (\sigma_{h}(z_{1}), \ldots, \sigma_{h}(z_{n})) .
\end{displaymath} By Corollary~\ref{corollary1}, this shows that the topological group $\phi(H) \cong H$ is skew-amenable.

(2) Suppose that the topological group $\overline{F}(X) \rtimes_{\tau} G$ is skew-amenable. Since the action \begin{displaymath}
	\widetilde{\tau} \colon \, \! \left( \overline{F}(X) \rtimes_{\tau} G\right) \! \times \overline{F}(X) \, \longrightarrow \, \overline{F}(X) , \quad ((f,g),f') \, \longmapsto \, f \tau_{g}(f')
\end{displaymath} is continuous with respect to the discrete topology on $\overline{F}(X)$ by Remark~\ref{remark:semidirect.product}(2), it now follows from Corollary~\ref{corollary:discrete.skew.amenability} that $\ell^{\infty}\!\left( \overline{F}(X) \right)$ admits a $\widetilde{\tau}$-invariant mean. Therefore, as $F$ is tight, the action of $G$ on $X$ is extensively amenable by~\cite[Theorem~1.3]{JMMS}. \end{proof}
 
\begin{cor}\label{corollary:extensive} Let $X$ be a set and let $G$ be a topological subgroup of $\mathrm{Sym}(X)$. Furthermore, let $H$ be any non-trivial amenable group and endow the group $H^{(X)}$ with the discrete topology. Then the following are equivalent. \begin{itemize}
	\item[$(1)$] The action of $G$ on $X$ is extensively amenable.
	\item[$(2)$] The topological group $H^{(X)} \! \rtimes G$ is skew-amenable.
\end{itemize} \end{cor}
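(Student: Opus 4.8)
The plan is to deduce the corollary by instantiating Theorem~\ref{theorem:extensive} with the functor from Example~\ref{example:functor}. Concretely, I would let $F \colon \mathbf{I} \to \mathbf{Grp}$ be the functor sending each finite set $Y$ to $F(Y) \defeq H^{(Y)}$, with each injection inducing the evident embedding. By Example~\ref{example:functor}, its extension satisfies $\overline{F}(X) = H^{(X)}$, and by Remark~\ref{remark:semidirect.product}(3) the associated functorial action $\tau$ is exactly the natural action $\tau(g,f) = g\boldsymbol{.}f$, where $(g\boldsymbol{.}f)(x) = f(g^{-1}x)$. Consequently the topological semi-direct product $\overline{F}(X) \rtimes_{\tau} G$ coincides with $H^{(X)} \rtimes G$, so that assertion~$(2)$ of the corollary is literally the skew-amenability of $\overline{F}(X) \rtimes G$.

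The only preliminary verifications are that $F$ takes values in $\mathbf{Amen}$ and that it is tight on $X$. For the former, I would note that for every finite set $Y$ the group $F(Y) = H^{(Y)} = H^{\lvert Y \rvert}$ is a finite direct power of the amenable group $H$, hence amenable, since $\mathbf{Amen}$ is closed under finite products; thus $F$ is in fact a functor $\mathbf{I} \to \mathbf{Amen}$. For the latter, I would invoke the hypothesis that $H$ is non-trivial: by the final sentence of Example~\ref{example:functor}, non-triviality of $H$ is equivalent to tightness of $F$ on every non-empty set, while tightness on the empty set is vacuous, so $F$ is tight on $X$ in all cases.

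With these observations in place, the two implications are immediate: $(1) \Rightarrow (2)$ is Theorem~\ref{theorem:extensive}(1), and $(2) \Rightarrow (1)$ is Theorem~\ref{theorem:extensive}(2), the latter using the tightness just established. There is thus no genuine obstacle here; the entire content of the corollary is carried by Theorem~\ref{theorem:extensive}, and the work reduces to recognizing the concrete semi-direct product $H^{(X)} \rtimes G$ as the functorial object $\overline{F}(X) \rtimes G$ and checking the (routine) amenability and tightness hypotheses on $F$.
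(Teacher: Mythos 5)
Your proposal is correct and is essentially identical to the paper's proof, which likewise obtains the corollary by applying Theorem~\ref{theorem:extensive} to the functor of Example~\ref{example:functor}; you have merely spelled out the routine verifications (that $Y \mapsto H^{(Y)}$ lands in $\mathbf{Amen}$, and that tightness on $X$ amounts to non-triviality of $H$) which the paper leaves implicit.
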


\begin{proof} This follows by applying Theorem~\ref{theorem:extensive} to Example~\ref{example:functor}. \end{proof}
 
\section{F\o lner-type characterization}\label{section:folner}

This section is devoted to a characterization of skew-amenability in the spirit of F\o lner's amenability criterion~\cite{folner}, continuing the topological approach pursued in~\cite{SchneiderThomFolner,pachl}. The following is a special instance of a more general result by Pachl~\cite{pachl}.

\begin{thm}[cf.~\cite{pachl}, Theorem~3.2]\label{theorem:pachl} A topological group $G$ is skew-amenable if and only if, for every $\epsilon >0$, every $H \in \mathrm{LUEB}(G)$ and every $E \in \Pfin (G)$, there is $\mu \in \Delta (G)$ such that \begin{displaymath}
	\forall g \in E \ \forall f \in H \colon \qquad \vert \mu (f) - \mu (f \circ \lambda_{g}) \vert \, \leq \, \epsilon .
\end{displaymath} \end{thm}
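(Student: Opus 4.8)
The plan is to prove both implications, noting up front that the genuine content lies in the forward direction. The backward implication only needs the stated condition for \emph{singletons} $H=\{f\}$, which are automatically UEB, whereas the forward implication must produce the estimate \emph{uniformly} over an arbitrary UEB set, and it is exactly this uniformity that forces one to work with Pachl's bornology $\mathrm{LUEB}(G)$ rather than with finite sets of functions. Throughout I would record the elementary observation that each $\lambda_{g}$ is an automorphism of the left-uniform space $(G,\mathscr{E}_{\Lsh}(G))$ (since $(gx)^{-1}(gy)=x^{-1}y$), so that $\{ f\circ\lambda_{g} \mid f\in K\}$ is UEB whenever $K$ is; together with the fact that $\mathrm{UEB}$ is a convex vector bornology, this keeps all relevant test sets inside $\mathrm{LUEB}(G)$.

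For $(\Longleftarrow)$ I would argue as follows. Every finite subset $\Phi\subseteq\mathrm{LUCB}(G)$ is uniformly equicontinuous and bounded, hence lies in $\mathrm{LUEB}(G)$; thus for each triple $(\Phi,E,\epsilon)$ the hypothesis yields some $\mu_{\Phi,E,\epsilon}\in\Delta(G)$ with $\sup_{g\in E}\sup_{f\in\Phi}\vert\mu(f)-\mu(f\circ\lambda_{g})\vert\le\epsilon$. Viewing these as means and directing the index set by inclusion and $\epsilon\downarrow 0$, I would extract a weak-$\ast$ cluster point $m\in\mathrm{M}(\mathrm{LUCB}(G))$, using weak-$\ast$ compactness \cite[2.1, Theorem~1.8(i), p.~68]{AnalysisOnSemigroups}. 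Evaluating the defining inequality along indices with $f\in\Phi$, $g\in E$ and $\epsilon$ small then forces $m(f)=m(f\circ\lambda_{g})$ for all $f,g$, so $m$ is a left-invariant mean and $G$ is skew-amenable.

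The forward direction $(\Longrightarrow)$ is a Day-type convexity argument and is the heart of the matter. Set $W\defeq\mathrm{LUCB}(G)^{\ast}$ and let $G$ act on $W$ by $(g\cdot\mu)(f)\defeq\mu(f\circ\lambda_{g})$, so that a mean is left-invariant exactly when $g\cdot m=m$ for all $g$. Fixing $E\in\Pfin(G)$, consider the linear map $\Theta\colon W\to W^{E},\ \mu\mapsto(\mu-g\cdot\mu)_{g\in E}$ and the convex set $C\defeq\Theta(\Delta(G))$. For each UEB set $K$ put $p_{K}(\nu)\defeq\sup_{f\in K}\vert\nu(f)\vert$, and topologize $W$ by the locally convex topology $\tau_{\mathrm{UEB}}$ generated by $\{p_{K}\mid K\in\mathrm{LUEB}(G)\}$; since a basic neighbourhood of $0$ in $W^{E}$ is $\{v\mid\max_{g\in E}p_{H}(v_{g})\le\epsilon\}$, the assertion to be proved is precisely that $0$ lies in the $\tau_{\mathrm{UEB}}$-product closure of $C$. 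Now skew-amenability provides a left-invariant mean $m$, and by weak-$\ast$ density of $\Delta(G)$ in $\mathrm{M}(\mathrm{LUCB}(G))$ \cite[2.1, Theorem~1.8(ii), p.~68]{AnalysisOnSemigroups} I would pick a net $\mu_{\alpha}\in\Delta(G)$ with $\mu_{\alpha}\to m$, whence $\Theta(\mu_{\alpha})\to\Theta(m)=0$ in the product of the weak-$\ast$ topologies $\sigma(W,\mathrm{LUCB}(G))$, using that each $g\cdot$ is weak-$\ast$ continuous as the adjoint of $f\mapsto f\circ\lambda_{g}$. Thus $0\in\overline{C}^{\,\sigma}$, and it remains to upgrade this to $\overline{C}^{\,\tau_{\mathrm{UEB}}}$.

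The decisive and hardest step is therefore to show that $\tau_{\mathrm{UEB}}$ and the weak-$\ast$ topology are \emph{both compatible with the dual pairing} $\langle\mathrm{LUCB}(G),W\rangle$, that is, $(W,\tau_{\mathrm{UEB}})'=\mathrm{LUCB}(G)=(W,\sigma(W,\mathrm{LUCB}(G)))'$: once this is in place, the Hahn--Banach/Mackey--Arens theorem guarantees that compatible locally convex topologies share the same closed convex sets, so $C$ has the same closure in $W^{E}$ under both (finite products preserve compatibility), yielding $0\in\overline{C}^{\,\tau_{\mathrm{UEB}}}$ and hence the F\o lner-type estimate. One inclusion of the duality identification is immediate, as singletons are UEB. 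The reverse inclusion is the main obstacle: a $p_{K}$-continuous functional on $W$ must be represented by an element of $\mathrm{LUCB}(G)$, which via the bipolar theorem reduces to the bornological lemma that the sup-norm-closed absolutely convex hull of a UEB set is again UEB, hence contained in $\mathrm{LUCB}(G)$. I would isolate this lemma first, proving it from the two stability facts that $\mathrm{UEB}$ is a convex vector bornology and that uniform equicontinuity is preserved under convex combinations and under pointwise limits; this single analytic point is what makes the passage from weak-$\ast$ to $\tau_{\mathrm{UEB}}$ legitimate and completes the proof.
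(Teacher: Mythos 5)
The paper itself does not prove this statement internally: its ``proof'' is a citation of the equivalence (i)$\Leftrightarrow$(iii) in Pachl's Theorem~3.2, specialized to the left uniformity. So your attempt has to be judged on its own merits. Your backward direction is correct (finite subsets of $\mathrm{LUCB}(G)$ are UEB, and the weak-$\ast$ cluster point argument is standard), and your overall strategy for the forward direction --- a Day-type convexity argument upgrading weak-$\ast$ approximate invariance to approximate invariance uniform on UEB sets, via an identification of dual pairs --- is the right one. However, the decisive step as you state it is false. You claim that $\bigl(W,\tau_{\mathrm{UEB}}\bigr)' = \mathrm{LUCB}(G)$ for $W = \mathrm{LUCB}(G)^{\ast}$. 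Take $G$ infinite \emph{discrete}: then $\mathrm{LUCB}(G)=\ell^{\infty}(G)$, every norm-bounded set is UEB, so $\tau_{\mathrm{UEB}}$ is exactly the dual-norm topology on $W=\ell^{\infty}(G)^{\ast}$, whose continuous dual is the bidual $\ell^{\infty}(G)^{\ast\ast}\supsetneq \ell^{\infty}(G)$, since $\ell^{\infty}(G)$ is not reflexive. Your proposed reduction of the duality claim to the (true) lemma that the norm-closed absolutely convex hull of a UEB set is UEB does not repair this: the bipolar theorem forces you to take the closure of that hull in the \emph{algebraic} dual of $W$ with respect to $\sigma(W^{\ast},W)$, not the sup-norm closure, and by Goldstine's theorem this closure of the unit ball of $\ell^{\infty}(G)$ is the entire unit ball of the bidual. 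So no argument can establish the compatibility of $\tau_{\mathrm{UEB}}$ with the pairing on all of $W$, and the Mackey--Arens step collapses as written.

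The gap is repairable, and the repair is instructive because it is exactly where Pachl's theory of molecular/uniform measures enters: run the whole argument not in $W$ but in the linear span $M$ of $\Delta(G)$ inside $W$ (finitely supported signed measures). On $M$ the duality claim \emph{is} true and elementary: if $\phi$ is linear on $M$ with $\vert \phi \vert \leq C\, p_{K}$ for a UEB set $K$, then $f(x) \mathrel{\mathop:}= \phi(\delta_{x})$ is bounded (by norm-boundedness of $K$) and left-uniformly continuous (by uniform equicontinuity of $K$, since $\vert f(x)-f(y)\vert \leq C \sup_{h \in K}\vert h(x)-h(y)\vert$), and $\phi(\mu)=\mu(f)$ for all $\mu \in M$ by linearity; conversely each $f \in \mathrm{LUCB}(G)$ is $p_{\{f\}}$-continuous. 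Your convex set $C = \Theta(\Delta(G))$ lies in $M^{E}$ and contains $0$ in its $\sigma\bigl(M,\mathrm{LUCB}(G)\bigr)^{E}$-closure by your invariant-mean-plus-density argument; Hahn--Banach separation applied \emph{inside} $M^{E}$, whose $\tau_{\mathrm{UEB}}$-continuous functionals are now exactly those given by $\mathrm{LUCB}(G)^{E}$, yields $0 \in \overline{C}^{\,\tau_{\mathrm{UEB}}}$, which is the desired conclusion. (In the discrete case this is precisely the classical passage from Day's weak-$\ast$ argument to norm-almost-invariant elements of $\ell^{1}(G)$ via Mazur, which works because one stays in $\ell^{1}$, never in $(\ell^{\infty})^{\ast}$.)
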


\begin{proof} This is precisely the equivalence of (i) and (iii) in~\cite[Theorem~3.2]{pachl}, evaluated for the left uniformity on $G$. \end{proof}

With Theorem~\ref{theorem:pachl} at hand, we are able to provide a characterization of skew-amenability of topological groups in terms of almost invariant finite subsets: Theorem~\ref{theorem:skew.folner} below. Before proceeding to Theorem~\ref{theorem:skew.folner}, let us note the following auxiliary fact.

\begin{lem}[cf.~\cite{SchneiderThomFolner}, Lemma~2.2]\label{lemma:density} Let $G$ be a non-discrete Hausdorff topological group. Then the set $\{ \delta_{F} \mid F \in \Pfin (G), \, F \ne \emptyset \}$ is dense in $\Delta(G) \subseteq \mathrm{LUEB}(G)^{\ast}$ with respect to the topology of uniform convergence on members of $\mathrm{LUEB}(G)$, i.e., \begin{displaymath}
	\forall \mu \in \Delta (G) \ \forall H \in \mathrm{LUEB}(G) \ \forall \epsilon > 0 \ \exists F \in \Pfin (G) \setminus \{ \emptyset \} \colon \quad \sup\nolimits_{f \in H} \vert \mu (f) - \delta_{F}(f) \vert \leq \epsilon .
\end{displaymath} \end{lem}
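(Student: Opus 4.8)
The plan is to exploit non-discreteness of $G$ to ``split'' the point masses of $\mu$ into clusters of nearby points, over which the functions in the prescribed UEB set vary arbitrarily little. Fix $\mu \in \Delta(G)$, $H \in \mathrm{LUEB}(G)$ and $\epsilon > 0$, and write $S \defeq \spt(\mu)$. Since $H$ is bounded, set $M \defeq \sup_{f \in H} \Vert f \Vert_{\infty} < \infty$; I may assume $M > 0$, the case $M = 0$ being trivial. Left-uniform equicontinuity of $H$ furnishes some $U \in \mathscr{U}(G)$ such that $\vert f(x) - f(y) \vert \leq \tfrac{\epsilon}{2}$ for all $f \in H$ and all $(x,y) \in U_{\Lsh}$, i.e.\ whenever $x^{-1}y \in U$.

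The first key point is that, as $G$ is Hausdorff and non-discrete, every neighborhood of the identity --- and hence every left-translate $xU = \{ y \in G \mid x^{-1}y \in U \}$ --- is infinite. (Indeed, a finite neighborhood $V$ of $e$ together with open sets separating $e$ from each of the finitely many points of $V \setminus \{ e \}$, available by the Hausdorff property, would intersect to the open singleton $\{ e \}$, making $G$ discrete.) This is exactly what will permit the splitting, and is the place where both hypotheses are used.

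Next I would approximate the weight vector $(\mu(x))_{x \in S}$ by a uniform one. Choose $N \in \mathbb{N}$ with $N > \tfrac{2M\vert S \vert}{\epsilon}$ and, by rounding each $N\mu(x)$ down and then distributing the deficit $N - \sum_{x} \lfloor N\mu(x)\rfloor$ one unit at a time, pick nonnegative integers $(n_x)_{x \in S}$ with $\sum_{x \in S} n_x = N$ and $\vert n_x - N\mu(x) \vert \leq 1$ for each $x \in S$, so that $\sum_{x \in S} \vert \tfrac{n_x}{N} - \mu(x) \vert \leq \tfrac{\vert S \vert}{N}$. Using infinitude of the sets $xU$, I would then select recursively pairwise distinct points $y^{x}_{1}, \ldots, y^{x}_{n_x} \in xU$ (for $x \in S$), all distinct across $S$, and put $F \defeq \{ y^{x}_{i} \mid x \in S, \, 1 \leq i \leq n_x \}$, a non-empty finite set with $\vert F \vert = N$.

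Finally, the estimate is assembled by a two-term triangle inequality. Since $x^{-1}y^{x}_{i} \in U$, the choice of $U$ gives $\vert f(y^{x}_{i}) - f(x) \vert \leq \tfrac{\epsilon}{2}$, whence $\bigl\vert \delta_F(f) - \sum_{x \in S} \tfrac{n_x}{N} f(x) \bigr\vert \leq \tfrac{\epsilon}{2}$ for all $f \in H$; and boundedness gives $\bigl\vert \sum_{x \in S} \tfrac{n_x}{N} f(x) - \mu(f) \bigr\vert \leq M \sum_{x \in S} \vert \tfrac{n_x}{N} - \mu(x) \vert \leq \tfrac{M \vert S \vert}{N} < \tfrac{\epsilon}{2}$. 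Combining yields $\sup_{f \in H} \vert \mu(f) - \delta_F(f) \vert \leq \epsilon$, as required. I expect the only genuinely delicate step to be the infinitude of the identity neighborhoods; the rounding argument and the triangle inequality are routine.
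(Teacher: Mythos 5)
Your argument is correct, but it takes a genuinely different route from the paper, which offers no self-contained proof at all: there the lemma is obtained as an immediate consequence of the more general \cite[Lemma~2.2]{SchneiderThomFolner}, a statement formulated for arbitrary uniform spaces (and hence applicable to either group uniformity), so the paper's ``proof'' is a one-line citation. What you have done is reconstruct, for the case at hand, the mass-splitting argument that underlies such a result, and every step checks out: (i) Hausdorffness plus non-discreteness forces every $V \in \mathscr{U}(G)$, hence every translate $xU$, to be infinite (a finite neighborhood of $e$ would make $\{ e \}$ open and $G$ discrete); (ii) the rounding of the weights of $\mu$ to integers $n_{x}$ with $\sum_{x \in S} n_{x} = N$ and $\sum_{x \in S} \vert \tfrac{n_{x}}{N} - \mu(x) \vert \leq \tfrac{\vert S \vert}{N}$ is standard and correctly executed; (iii) the selection of $N$ pairwise distinct points $y^{x}_{i} \in xU$ uses the correct orientation of the entourage, since $x^{-1}y^{x}_{i} \in U$ means exactly $(x,y^{x}_{i}) \in U_{\Lsh}$, which is what the left-uniform equicontinuity estimate requires; and (iv) the final two-term triangle inequality gives $\tfrac{\epsilon}{2} + \tfrac{M \vert S \vert}{N} \leq \epsilon$ by your choice $N > \tfrac{2M\vert S \vert}{\epsilon}$. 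The trade-off between the two approaches: the paper's citation is shorter and inherits the generality of the uniform-space formulation, whereas your proof is elementary and self-contained, and it makes visible precisely where the hypotheses on $G$ enter --- both Hausdorffness and non-discreteness are used only to guarantee infinitude of identity neighborhoods, and nowhere else.
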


\begin{proof} This is an immediate consequence of the more general~\cite[Lemma~2.2]{SchneiderThomFolner}. \end{proof}

We clarify some additional terminology and notation, essentially following~\cite{SchneiderThomFolner}. Consider a \emph{bipartite graph}, that is, a triple $\mathscr{B} = (E,F,R)$ consisting of two finite sets $E$ and $F$ and a subset $R \subseteq E \times F$. If $S \subseteq E$, then $N_{\mathscr{B}}(S) \defeq \{ y \in F \mid \exists x \in S \colon (x,y) \in R \}$. A \emph{matching} in $\mathscr{B}$ is an injective map $\phi \colon D \to F$ such that $D \subseteq E$ and $(x,\phi(x)) \in R$ for all $x \in D$. The \emph{matching number} of $\mathscr{B}$ is defined to be \begin{displaymath}
	\match (\mathscr{B}) \, \defeq \, \sup \{ \vert D \vert \mid \phi \colon D \to F \textnormal{ matching in } \mathscr{B} \} .
\end{displaymath} For the sake of convenience, let us recall a version of Hall's matching theorem~\cite{Hall35} formulated by Ore~\cite[Theorem~8]{Ore}.

\begin{thm}[\cite{Hall35}; \cite{Ore}, Theorem~8]\label{theorem:hall} If $\mathscr{B} = (E,F,R)$ is a bipartite graph, then \begin{displaymath}
	\match (\mathscr{B}) \, = \, |E| - \sup \{ |S| - |N_{\mathscr{B}}(S)| \mid S \subseteq E \} .
\end{displaymath} \end{thm}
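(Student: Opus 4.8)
The plan is to establish the two inequalities separately, writing $d \defeq \sup\{|S| - |N_{\mathscr{B}}(S)| \mid S \subseteq E\}$ and recording that $d \geq 0$, since $S = \emptyset$ is admissible and contributes $0$. For the inequality $\match(\mathscr{B}) \leq |E| - d$, I would fix an arbitrary matching $\phi \colon D \to F$ together with an arbitrary $S \subseteq E$. As $\phi$ restricts to an injection of $D \cap S$ into $N_{\mathscr{B}}(S)$, one has $|D \cap S| \leq |N_{\mathscr{B}}(S)|$; combining this with the trivial bound $|D \setminus S| \leq |E \setminus S| = |E| - |S|$ yields $|D| = |D \cap S| + |D \setminus S| \leq |E| - (|S| - |N_{\mathscr{B}}(S)|)$. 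Taking the supremum over $S$ and then over all matchings $\phi$ gives $\match(\mathscr{B}) \leq |E| - d$.

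For the reverse inequality I would reduce to the classical deficiency-zero form of Hall's theorem by a dummy-vertex construction. Adjoin to $F$ a set $T$ of $d$ new vertices disjoint from $F$, and join each vertex of $E$ to each vertex of $T$; that is, form the bipartite graph $\mathscr{B}' \defeq (E, F \sqcup T, R \cup (E \times T))$. The key verification is that $\mathscr{B}'$ satisfies the classical Hall condition $|N_{\mathscr{B}'}(S)| \geq |S|$ for every $S \subseteq E$. This is trivial for $S = \emptyset$, whereas for non-empty $S$ one has $N_{\mathscr{B}'}(S) = N_{\mathscr{B}}(S) \sqcup T$, so that $|N_{\mathscr{B}'}(S)| = |N_{\mathscr{B}}(S)| + d \geq |N_{\mathscr{B}}(S)| + (|S| - |N_{\mathscr{B}}(S)|) = |S|$ directly from the definition of $d$.

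By the classical form of Hall's theorem~\cite{Hall35} applied to $\mathscr{B}'$, there is an injection $\psi \colon E \to F \sqcup T$ with $(x, \psi(x)) \in R \cup (E \times T)$ for every $x \in E$. Since $\psi$ is injective and $|T| = d$, at most $d$ elements of $E$ are sent into $T$; deleting these from the domain produces a matching of $\mathscr{B}$ whose domain $\psi^{-1}(F)$ has cardinality at least $|E| - d$. Hence $\match(\mathscr{B}) \geq |E| - d$, and together with the first paragraph this yields the claimed identity.

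The dummy-vertex reduction and the verifications above are routine, so the genuine content lies entirely in the classical deficiency-zero case of Hall's theorem used in the previous paragraph; this is the step I expect to be the main obstacle, should a self-contained argument be preferred over a citation. I would dispatch it by the standard induction on $|E|$: if every non-empty proper subset $S \subsetneq E$ satisfies $|N_{\mathscr{B}}(S)| > |S|$, match one vertex arbitrarily and delete it together with its image, the strict inequality guaranteeing that the Hall condition survives in the smaller graph; otherwise choose a tight set $S_{0}$ with $|N_{\mathscr{B}}(S_{0})| = |S_{0}|$, match $S_{0}$ into $N_{\mathscr{B}}(S_{0})$ by induction, and verify that the induced bipartite graph on $(E \setminus S_{0}, F \setminus N_{\mathscr{B}}(S_{0}))$ again satisfies the Hall condition using $N_{\mathscr{B}}(S \cup S_{0}) = N_{\mathscr{B}}(S) \cup N_{\mathscr{B}}(S_{0})$ for $S \subseteq E \setminus S_{0}$. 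Splicing the two partial matchings completes the induction.
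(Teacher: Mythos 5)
Your proof is correct, but there is in fact nothing in the paper to compare it against: Theorem~\ref{theorem:hall} is stated there purely as an imported classical result, credited to Hall~\cite{Hall35} and to Ore~\cite[Theorem~8]{Ore}, and no proof is given. Your argument is the standard self-contained derivation of the deficiency formula, and all three components check out. Writing $d$ for the deficiency supremum as you do, the inequality $\match(\mathscr{B}) \leq |E| - d$ is exactly the routine count obtained by splitting the domain of a matching along an arbitrary $S \subseteq E$. The dummy-vertex reduction is sound: since $T$ is disjoint from $F$ and joined to all of $E$, one indeed has $N_{\mathscr{B}'}(S) = N_{\mathscr{B}}(S) \sqcup T$ for non-empty $S$, so Hall's condition for $\mathscr{B}'$ follows from the definition of $d$ as a supremum; injectivity of $\psi$ forces $\left\lvert \psi^{-1}(F) \right\rvert \geq |E| - |T| = |E| - d$, and the restriction of $\psi$ to $\psi^{-1}(F)$ really is a matching in $\mathscr{B}$, because a pair whose second coordinate lies in $F$ cannot belong to $E \times T$. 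Your induction for the deficiency-zero case is the classical Halmos--Vaughan argument; the only points worth spelling out in a full write-up are that the ``arbitrary'' vertex in the first case has a neighbour at all (this follows from Hall's condition applied to the singleton, or to $E$ itself when $|E| = 1$), and the computation $\left\lvert N_{\mathscr{B}}(S) \setminus N_{\mathscr{B}}(S_{0}) \right\rvert = \left\lvert N_{\mathscr{B}}(S \cup S_{0}) \right\rvert - \left\lvert N_{\mathscr{B}}(S_{0}) \right\rvert \geq \left\lvert S \cup S_{0} \right\rvert - \left\lvert S_{0} \right\rvert = |S|$ in the second case, which you correctly reduce to the identity $N_{\mathscr{B}}(S \cup S_{0}) = N_{\mathscr{B}}(S) \cup N_{\mathscr{B}}(S_{0})$. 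Relative to the paper, what your argument buys is self-containedness, at the cost of roughly a page; the paper's choice to quote the result as a black box is the conventional one, since the theorem is far older than anything else in the text.
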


In our considerations concerning skew-amenability of topological groups, the following instance of bipartite graphs (and their matching numbers) will be relevant.

\begin{definition} Let $G$ be a topological group. For $E,F \in \Pfin(G)$ and $U \in \mathscr{U}(G)$, we define \begin{displaymath}
	\match_{\Lsh}(E,F,U) \, \defeq \, \match \! \left( E,F, U_{\Lsh} \cap (E \times F) \right) .
\end{displaymath} \end{definition}

The following variation on~\cite[Theorem~4.5]{SchneiderThomFolner} provides a topological variant of F\o lner's theorem~\cite{folner} in the context of skew-amenability.

\begin{thm}\label{theorem:skew.folner} A topological group $G$ is skew-amenable if and only if, for every $\theta \in [0,1)$, every $U \in \mathscr{U}(G)$ and every $E \in \Pfin (G)$, there exists $F \in \Pfin (G)\setminus \{ \emptyset \}$ such that \begin{displaymath}
	\forall g \in E \colon \quad \match_{\Lsh}(F,gF,U) \, \geq \, \theta \vert F \vert .
\end{displaymath} \end{thm}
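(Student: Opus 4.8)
The plan is to use Pachl's criterion (Theorem~\ref{theorem:pachl}) as the common bridge: it characterises skew-amenability by the existence, for prescribed data, of an almost-invariant $\mu \in \Delta(G)$, and the task is to convert such a $\mu$ into an almost-invariant \emph{uniform} probability measure $\delta_F$ and then into a matching number via Hall's theorem (Theorem~\ref{theorem:hall}). Throughout I record the identity $\delta_F(f \circ \lambda_g) = \delta_{gF}(f)$, valid because $\lambda_g \colon F \to gF$ is a bijection, so that $\delta_F(f) - \delta_F(f \circ \lambda_g) = \delta_F(f) - \delta_{gF}(f)$.

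For the direction ($\Longleftarrow$), I would assume the matching condition and verify the criterion of Theorem~\ref{theorem:pachl}. Given $\epsilon > 0$, $H \in \mathrm{LUEB}(G)$ and $E \in \Pfin(G)$, set $M \defeq \sup_{f \in H}\Vert f \Vert_{\infty}$ and choose, by uniform equicontinuity, $U \in \mathscr{U}(G)$ with $\vert f(x) - f(y)\vert \le \tfrac{\epsilon}{2}$ whenever $f \in H$ and $x^{-1}y \in U$. Pick $\theta \in [0,1)$ with $2(1-\theta)M \le \tfrac{\epsilon}{2}$ and apply the hypothesis to $(\theta, U, E)$, obtaining $F \in \Pfin(G) \setminus \{\emptyset\}$ with $\match_{\Lsh}(F, gF, U) \ge \theta\vert F\vert$ for every $g \in E$. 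For fixed $g$, a matching $\phi \colon D \to gF$ realising this bound is an injection of a set $D \subseteq F$ with $\vert D\vert \ge \theta\vert F\vert$ satisfying $x^{-1}\phi(x) \in U$; splitting $\sum_{x \in F} f(x) - \sum_{y \in gF} f(y)$ along $D$, $F \setminus D$ and $gF \setminus \phi(D)$, estimating the matched pairs by $\tfrac{\epsilon}{2}$ and the two equinumerous unmatched remainders by $M$ each, yields $\vert \delta_F(f) - \delta_F(f \circ \lambda_g)\vert \le \tfrac{\epsilon}{2} + 2(1-\theta)M \le \epsilon$. Thus $\mu \defeq \delta_F$ witnesses the criterion.

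The direction ($\Longrightarrow$) is the substantial one, and the idea is to manufacture a UEB family whose almost-invariance \emph{forces} the matching bound. Assuming $G$ skew-amenable and given $(\theta, U, E)$, I would first fix a bounded, continuous, left-invariant pseudometric $d$ on $G$ together with $\delta > 0$ such that $\{ g \mid d(e,g) < \delta \} \subseteq U$ (available from the standard metrisation of the left uniformity, whose entourages $U_{\Lsh}$ are left-invariant). For $S \in \Pfin(G)$ put $f_S(x) \defeq \max\bigl(0, 1 - \tfrac{1}{\delta} d(x,S)\bigr)$. Then $\mathbf{1}_S \le f_S \le \mathbf{1}_{SU}$ (the upper bound because $d(x,S) < \delta$ forces $s^{-1}x \in U$, hence $x \in sU$, for some $s \in S$), each $f_S$ is $\tfrac{1}{\delta}$-Lipschitz for $d$, and left-invariance gives $f_S \circ \lambda_g = f_{g^{-1}S}$; hence $H \defeq \{ f_S \mid S \in \Pfin(G)\}$ is a UEB family stable under the operators $f \mapsto f \circ \lambda_g$. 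Applying Theorem~\ref{theorem:pachl} with $H$, $E$ and $\epsilon_0 \defeq \tfrac{1-\theta}{3}$ produces an almost-invariant $\mu \in \Delta(G)$, and Lemma~\ref{lemma:density} then furnishes $F \in \Pfin(G) \setminus \{\emptyset\}$ with $\sup_{f \in H}\vert \mu(f) - \delta_F(f)\vert \le \epsilon_0$; translation-stability of $H$ gives, via a three-term estimate, $\vert \delta_F(f_S) - \delta_F(f_S \circ \lambda_g)\vert \le 3\epsilon_0 = 1-\theta$ for all $S \in \Pfin(G)$ and $g \in E$.

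It then remains to read off the bound. For $S \subseteq F$ the sandwich inequalities give $\delta_F(f_S) \ge \tfrac{\vert S\vert}{\vert F\vert}$ and $\delta_F(f_S \circ \lambda_g) = \delta_{gF}(f_S) \le \tfrac{\vert gF \cap SU\vert}{\vert F\vert}$, whence $\vert S\vert - \vert gF \cap SU\vert \le (1-\theta)\vert F\vert$. Since the neighbourhood of $S$ in the bipartite graph defining $\match_{\Lsh}(F, gF, U)$ is exactly $gF \cap SU$, Theorem~\ref{theorem:hall} yields $\match_{\Lsh}(F, gF, U) = \vert F\vert - \max_{S \subseteq F}\bigl(\vert S\vert - \vert gF \cap SU\vert\bigr) \ge \theta\vert F\vert$ for every $g \in E$, as required. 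The main obstacle is precisely the construction in the previous paragraph: one must exhibit a single uniformly equicontinuous family that \emph{simultaneously} sandwiches all indicators $\mathbf{1}_S \le f_S \le \mathbf{1}_{SU}$ and is stable under left-translation, so that Hall's combinatorial identity can be fed by the analytic almost-invariance. Two minor points need separate care: Lemma~\ref{lemma:density} requires $G$ non-discrete (and Hausdorff, to which one may reduce by passing to $G/\overline{\{e\}}$, on which $\mathrm{LUCB}$ is unchanged), while the discrete case follows directly from F\o lner's classical theorem, since $e \in U$ gives $\match_{\Lsh}(F, gF, U) \ge \vert F \cap gF\vert$.
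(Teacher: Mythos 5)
Your proof is correct and follows essentially the same route as the paper: both directions go through Pachl's criterion (Theorem~\ref{theorem:pachl}), with the forward direction built on a left-translation-stable UEB family sandwiched between $\mathbf{1}_S$ and $\mathbf{1}_{SU}$, the density Lemma~\ref{lemma:density}, Hall's theorem, and the same reduction to the Hausdorff quotient and to F\o lner's theorem in the discrete case. The only cosmetic differences are that the paper manufactures the functions $f_S$ from a Urysohn function for the left uniformity (via $f_S(x) = \sup_{s \in S} f(s^{-1}x)$) rather than from a left-invariant pseudometric---the two constructions yield the same family---and that in the converse direction the paper extends the matching to a bijection $F \to gF$ instead of estimating the two unmatched remainders separately.
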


\begin{proof} The argument is a straightforward adaptation of the proof of~\cite[Theorem~4.5]{SchneiderThomFolner} and will be included only for the sake of completeness. Since each of the two properties of~$G$ (i.e., amenability and the F\o lner-type citerion) is equivalent to the same property of the Hausdorff quotient $G/\bigcap \mathscr{U}(G)$, we may and will assume that $G$ is Hausdorff.
	
($\Longrightarrow$) If $G$ is discrete, then the desired conclusion is due to F\o lner's work~\cite{folner}. Therefore, we may and will assume that $G$ is non-discrete. Let $\theta \in [0,1)$, $U \in \mathscr{U}(G)$ and $E \in \Pfin (G)$. By Urysohn's lemma for uniform space (see, e.g.,~\cite[pp.~182--183]{james}), there exists a left-uniformly continuous function $f \colon G \to [0,1]$ such that $f(e) = 1$ and $f(x) = 0$ for all $x \in G \setminus U$. For every $S \subseteq G$, let \begin{displaymath}
	f_{S} \colon \, G \, \longrightarrow \, [0,1] , \quad x \, \longmapsto \, \sup\nolimits_{s \in S} f\!\left(s^{-1}x\right) .
\end{displaymath} We claim that $H \defeq \{ f_{S} \mid S \subseteq G \}$ is an element of $\mathrm{LUEB}(G)$. Evidently, $H$ is $\Vert \cdot \Vert_{\infty}$-bounded. In order to verify left-uniform equicontinuity, let $\epsilon > 0$. Since $f$ is left-uniformly continuous, there exists $V \in \mathscr{U}(G)$ such that $\vert f(x) - f(y) \vert \leq \epsilon$ for all $(x,y) \in V_{\Lsh}$. In turn, if $(x,y) \in V_{\Lsh}$, then $(gx,gy) \in V_{\Lsh}$ for every $g \in G$, whence \begin{displaymath}
	\left\lvert f_{S}(x) - f_{S}(y) \right\rvert \, = \, \left\lvert \sup\nolimits_{s \in S} f\!\left(s^{-1}x\right) - \sup\nolimits_{s \in S} f\!\left(s^{-1}y\right) \right\rvert \, \leq \, \sup\nolimits_{s \in S} \left\lvert f\!\left(s^{-1}x\right) - f\!\left(s^{-1}y\right) \right\rvert \, \leq \, \epsilon
\end{displaymath} for all $S \subseteq G$. So, $H \in \mathrm{LUEB}(G)$ as claimed. Now, let $\epsilon_{0} \defeq 1-\theta$. According to Theorem~\ref{theorem:pachl}, there exists $\mu \in \Delta (G)$ such that \begin{displaymath}
	\forall g \in E \ \forall h \in H \colon \quad \vert \mu (h \circ \lambda_{g}) - \mu (h) \vert \, \leq \, \tfrac{\epsilon_{0}}{3} .
\end{displaymath} Furthermore, by Lemma~\ref{lemma:density}, there exists $F \in \Pfin (G)\setminus \{ \emptyset \}$ such that $\vert \mu (h) - \delta_{F}(h) \vert \leq \tfrac{\epsilon_{0}}{3}$ for every $h \in H$. Consequently, if $g \in E$ and $S \subseteq G$, then $f_{S} \circ \lambda_{g} = f_{g^{-1}S} \in H$ and thus \begin{displaymath}
	\vert \delta_{F}(f_{S} - f_{S} \circ \lambda_{g}) \vert \leq \vert \delta_{F}(f_{S}) - \mu (f_{S}) \vert + \vert \mu (f_{S}) - \mu (f_{S} \circ \lambda_{g}) \vert + \vert \mu (f_{S} \circ \lambda_{g}) - \delta_{F}(f_{S} \circ \lambda_{g}) \vert \leq \epsilon_{0} .
\end{displaymath} We are going to deduce that $\match_{\Lsh}(F,gF,U) \geq \theta \vert F \vert$ for every $g \in E$. To this end, let $g \in E$ and consider the bipartite graph $\mathscr{B} \defeq (F,gF,U_{\Lsh} \cap (F \times gF))$. Now, if $S \subseteq F$, then \begin{displaymath}
	\vert \delta_{F} (f_{S} \circ \lambda_{g}) - \delta_{F} (f_{S}) \vert \, \leq \, \epsilon_{0} \, = \, 1 - \theta
\end{displaymath} and therefore \begin{align*}
	\vert S \vert \, &\stackrel{S \subseteq f_{S}^{-1}(1)}{\leq} \, \vert F \vert \delta_{F}(f_{S}) \, \leq \, \vert F \vert(1-\theta ) + \vert F \vert \delta_{F} (f_{S} \circ \lambda_{g}) \, = \,  (1-\theta )\vert F \vert + \sum\nolimits_{x \in gF} f_{S}(x) \\
	& \stackrel{\spt (f_{S}) \subseteq SU}{=} \,  (1-\theta )\vert F \vert + \sum\nolimits_{x \in N_{\mathscr{B}}(S)} f_{S}(x) \, \leq \, (1-\theta )\vert F \vert + \vert N_{\mathscr{B}}(S) \vert ,
\end{align*} that is, $\vert S \vert - \vert N_{\mathscr{B}}(S) \vert \leq (1-\theta)\vert F \vert$. We conclude that \begin{displaymath}
	\match_{\Lsh}(F,gF,U) \, \stackrel{\ref{theorem:hall}}{=} \, |F| - \sup \{ |S| - |N_{\mathscr{B}}(S)| \mid S \subseteq F \} \, \geq \, \vert F \vert - (1-\theta) \vert F \vert \, = \, \theta \vert F \vert ,
\end{displaymath} as desired. 

($\Longleftarrow$) We will verify the amenability criterion established in Theorem~\ref{theorem:pachl}. To this end, let $\epsilon \in (0,1]$, $H \in \mathrm{LUEB}(G)$, and $E \in \Pfin (G)$. Consider \begin{displaymath}
	s \, \defeq \, \sup \bigl( \{ \Vert f \Vert_{\infty} \mid f \in H \} \cup \{ 1 \} \bigr) \, \in \, [1,\infty)
\end{displaymath} and let $\theta \defeq 1 - \tfrac{\epsilon}{4s}$. Since $H$ is left-uniformly equicontinuous, we find some $U \in \mathscr{U}(G)$ such that $\vert f(x) - f(y) \vert \leq \tfrac{\epsilon}{2}$ for all $(x,y) \in U_{\Lsh}$ and $f \in H$. Due to our hypothesis, there exists a non-empty finite subset $F \subseteq G$ such that \begin{displaymath}
	\forall g \in E \colon \quad \match_{\Lsh}(F,gF,U) \, \geq \, \theta \vert F \vert .
\end{displaymath} We now claim that $\vert \delta_{F}(f) - \delta_{F}(f \circ \lambda_{g}) \vert \leq \epsilon$ for all $g \in E$ and $f \in H$. In order to prove this, let $g \in E$. Since $\match_{\Lsh}(F,gF,U) \, \geq \, \theta \vert F \vert$, there exists an injective map $\phi \colon D \to gF$ such that $D \subseteq F$, $\vert D \vert \geq \theta \vert F \vert$ and $(x,\phi(x)) \in U_{\Lsh}$ for each $x \in D$. Let us fix any bijection $\psi \colon F \to gF$ with $\psi \vert_{D} = \phi$. Then, for every $f \in H$, we conclude that \begin{align*}
	\vert \delta_{F}(f) - \delta_{F}(f \circ \lambda_{g}) \vert \, & = \, \tfrac{1}{\vert F \vert} \left\lvert \sum\nolimits_{x \in F} f(x) - \sum\nolimits_{x \in F} f(gx) \right\rvert \\
	&= \, \tfrac{1}{\vert F \vert} \left\lvert \left( \sum\nolimits_{x \in D} f(x)-f(\psi (x)) \right) + \left( \sum\nolimits_{x \in F\setminus D} f(x) -f(\psi(x)) \right) \right\rvert \\
	& \leq \, \tfrac{1}{\vert F \vert} \sum\nolimits_{x \in D} \vert f(x)-f(\phi (x)) \vert + \tfrac{1}{\vert F \vert} \sum\nolimits_{x \in F\setminus D} \vert f(x) -f(\psi(x)) \vert \\
	& \leq \, \tfrac{\epsilon \vert D \vert}{2\vert F \vert} + 2s \tfrac{\vert F \vert - \vert D \vert}{\vert F \vert} \, \leq \, \tfrac{\epsilon}{2} + 2s(1-\theta) \, = \, \epsilon . \qedhere
\end{align*} \end{proof}

\section{Persistence properties}\label{section:persistence}
 
We continue by studying persistence properties of skew-amenability. Our first objective is to remark that skew-amenability of topological subgroups is not preserved under taking closures. We illustrate this with the following two obvious examples.

\begin{exmpl} (1) Let $X$ be any infinite set. Then the topological group $\mathrm{Sym}(X)$ is not skew-amenable, due to Corollary~\ref{corollary:discrete.skew.amenability} and the well-known fact that $\ell^{\infty}(X)$ does not admit a $\mathrm{Sym}(X)$-invariant mean. Yet, its dense subgroup \begin{displaymath}
	\{ g \in \mathrm{Sym}(X) \mid \vert \{ x \in X \mid g(x) \ne x \} \vert < \infty \}
\end{displaymath} is locally finite, hence amenable with respect to the discrete topology and therefore skew-amenable with respect to the subspace topology inherited from $\mathrm{Sym}(X)$.

(2) Consider the Hilbert space $\mathscr{H} \defeq \ell^{2}(\mathbb{N})$. The unitary group $U(\mathscr{H})$, equipped with the strong operator topology, is not skew-amenable, as noted by Pestov~\cite[an example after Proposition~3]{Pestov2020}. However, the \emph{Fredholm unitary group} \begin{displaymath}
	U_{C}(\mathscr{H}) \, \defeq \, \{ T \in U(\mathscr{H}) \mid T - \mathrm{Id}_{\mathscr{H}} \text{ compact operator} \}
\end{displaymath} is even extremely amenable with respect to the uniform operator topology, by the work of Gromov and Milman~\cite{GromovMilman}. Since the uniform operator topology is SIN, the group $U_{C}(\mathscr{H})$ is also skew-amenable for this topology, thus skew-amenable for the (coarser) strong operator topology, despite the fact that $U_{C}(\mathscr{H})$ is dense in $U(\mathscr{H})$ with regards to the latter topology. \end{exmpl}

Considering the examples above, the following basic observations appear worth noting.

\begin{lem}\label{lemma:basics} Let $G$ and $H$ be topological groups. Suppose that $G$ is skew-amenable. \begin{itemize}
	\item[$(1)$] If $H$ is a dense subgroup of $G$, then $H$ is skew-amenable.
	\item[$(2)$] If there exists a surjective continuous homomorphism from $G$ onto $H$, then $H$ is skew-amenable.
\end{itemize} \end{lem}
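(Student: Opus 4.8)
The plan is to transport a left-invariant mean from $\mathrm{LUCB}(G)$ to $\mathrm{LUCB}(H)$ along a unital positive linear operator induced by the relevant homomorphism, in each case matching left-translations on $H$ with left-translations on $G$ so that invariance is preserved. Throughout I fix a left-invariant mean $\mu$ on $\mathrm{LUCB}(G)$, which exists since $G$ is skew-amenable.

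For~(1), I would consider the restriction map $R \colon \mathrm{LUCB}(G) \to \mathrm{LUCB}(H)$, $f \mapsto f\vert_{H}$, associated with the inclusion $H \hookrightarrow G$. This is a unital algebra homomorphism, and Remark~\ref{remark:induced.uniformities}(1) shows it is onto $\mathrm{LUCB}(H)$. Since $H$ is dense in $G$ and every function in $\mathrm{LUCB}(G)$ is continuous, $R$ is also injective and isometric, hence an isomorphism of unital real Banach algebras; in particular $R^{-1}$ is a unital, order-preserving linear map, so $\nu \defeq \mu \circ R^{-1}$ is a mean on $\mathrm{LUCB}(H)$. For left-invariance I would fix $h \in H$ and $f \in \mathrm{LUCB}(H)$, write $f = R(\tilde f)$ with $\tilde f \in \mathrm{LUCB}(G)$, and observe that $(f \circ \lambda_{h})(x) = \tilde f(hx) = (\tilde f \circ \lambda_{h})(x)$ for all $x \in H$, so that $R^{-1}(f \circ \lambda_{h}) = \tilde f \circ \lambda_{h}$. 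Since $h \in H \subseteq G$, left-invariance of $\mu$ then yields $\nu(f \circ \lambda_{h}) = \mu(\tilde f \circ \lambda_{h}) = \mu(\tilde f) = \nu(f)$.

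For~(2), let $q \colon G \to H$ be a surjective continuous homomorphism, and I would work with the pullback $q^{\ast} \colon \mathrm{LUCB}(H) \to \ell^{\infty}(G)$, $f \mapsto f \circ q$. Continuity of $q$ gives $q^{-1}(U) \in \mathscr{U}(G)$ for each $U \in \mathscr{U}(H)$, and since $q$ is a homomorphism, $(a,b) \in (q^{-1}(U))_{\Lsh}$ forces $(q(a),q(b)) \in U_{\Lsh}$; hence left-uniform continuity of $f$ passes to $f \circ q$, so $q^{\ast}$ lands in $\mathrm{LUCB}(G)$ and is a unital positive linear map, making $\nu \defeq \mu \circ q^{\ast}$ a mean on $\mathrm{LUCB}(H)$. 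For left-invariance I would fix $h \in H$ and $f \in \mathrm{LUCB}(H)$ and use surjectivity to choose $g \in G$ with $q(g) = h$; then $f(h\,q(a)) = f(q(ga))$ for every $a \in G$ gives $q^{\ast}(f \circ \lambda_{h}) = q^{\ast}(f) \circ \lambda_{g}$, whence $\nu(f \circ \lambda_{h}) = \mu(q^{\ast}(f) \circ \lambda_{g}) = \mu(q^{\ast}(f)) = \nu(f)$.

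Neither part is genuinely hard; the steps deserving attention are, in~(1), that density together with continuity upgrades the surjective restriction map to a Banach-algebra \emph{isomorphism}, so that $R^{-1}$ exists and can push $\mu$ forward, and, in~(2), the use of surjectivity of $q$ to realize each left-translation $\lambda_{h}$ on $H$ as the $q^{\ast}$-image of a left-translation $\lambda_{g}$ on $G$ --- precisely the point that lets invariance survive the pullback.
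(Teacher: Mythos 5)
Your proposal is correct and takes essentially the same route as the paper's proof: in~(1) you upgrade the restriction map $\mathrm{LUCB}(G) \to \mathrm{LUCB}(H)$ to an isomorphism of unital real Banach algebras using Remark~\ref{remark:induced.uniformities}(1) together with density, then push the mean through its inverse using the equivariance $R(f)\circ\lambda_{h} = R(f\circ\lambda_{h})$ for $h \in H$; in~(2) you pull back along the quotient homomorphism and use surjectivity to realize each $\lambda_{h}$ as $\lambda_{g}$ with $q(g)=h$, exactly as the paper does. The only difference is cosmetic: you spell out the verifications (injectivity and isometricity of the restriction, and that the pullback lands in $\mathrm{LUCB}(G)$) which the paper leaves as straightforward.
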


\begin{proof} (1): It follows by Remark~\ref{remark:induced.uniformities}(1) and $H$ being dense in $G$ that the map \begin{displaymath}
	\Psi \colon \, \mathrm{LUCB}(G) \, \longrightarrow \, \mathrm{LUCB}(H), \quad f \, \longmapsto \, f\vert_{H}
\end{displaymath} is an isomorphism of unital real Banach algebras. Moreover, \begin{displaymath}
	\Psi (f) \circ \lambda_{h} \, = \, f\vert_{H} \circ \lambda_{h} \, = \, (f \circ \lambda_{h})\vert_{H} \, = \, \Psi (f \circ \lambda_{h})
\end{displaymath} for all $f \in \mathrm{LUCB}(G)$ and $h \in H$. Therefore, if $\mu \colon \mathrm{LUCB}(G) \to \mathbb{R}$ is a left-invariant mean, then $\mu \circ \Psi^{-1} \colon \mathrm{LUCB}(H) \to \mathbb{R}$ is a left-invariant mean, too.

(2): Let $\phi \colon G \to H$ be a surjective continuous homomorphism. It is straightforward that \begin{displaymath}
	\Phi \colon \, \mathrm{LUCB}(H) \, \longrightarrow \, \mathrm{LUCB}(G), \quad f \, \longmapsto \, f \circ \phi
\end{displaymath} is a well-defined, positive, unital, linear operator. Furthermore, \begin{displaymath}
	\Phi (f) \circ \lambda_{g} \, = \, f \circ \phi \circ \lambda_{g} \, = \, f \circ \lambda_{\phi(g)} \circ \phi \, = \, \Phi \!\left(f \circ \lambda_{\phi(g)}\right)
\end{displaymath} for all $f \in \mathrm{LUCB}(H)$ and $g \in G$. Therefore, since $\phi$ is surjective, if $\mu \colon \mathrm{LUCB}(G) \to \mathbb{R}$ is a left-invariant mean, then the mean $\mu \circ \Phi \colon \mathrm{LUCB}(H) \to \mathbb{R}$ is left-invariant as well. \end{proof}

\begin{lem} Let $N$ be an open, normal subgroup of a topological group $G$. Suppose that the action of $G$ on $N$ by conjugation is uniformly equicontinuous. If $G$ is skew-amenable, then $N$ is skew-amenable. \end{lem}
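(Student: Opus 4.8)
The plan is to mimic the classical argument showing that amenability of a discrete group passes to its subgroups, transporting a left-invariant mean from $G$ to $N$ through a coset-transversal ``projection'' onto $N$, and to use the conjugation hypothesis precisely to guarantee that the resulting averaging operator takes values in $\mathrm{LUCB}(G)$. Since $N$ is open, the quotient $G/N$ is discrete; I would fix a transversal $R \subseteq G$ for the right cosets $N \backslash G$ with $e \in R$, and let $p \colon G \to N$ be the associated projection, i.e.\ $p(g) \defeq g\,r(g)^{-1}$, where $r(g) \in R$ is the unique representative of $Ng$. For $f \in \mathrm{LUCB}(N)$ put $\tilde{f} \defeq f \circ p \in \ell^{\infty}(G)$. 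First I would record the purely algebraic properties of the operator $T \colon f \mapsto \tilde{f}$: it is linear, positive, and unital (as $p(G) \subseteq N$ yields $\widetilde{\mathbf{1}} = \mathbf{1}$), and it is $N$-equivariant for left translations, $T(f \circ \lambda_{n}) = T(f) \circ \lambda_{n}$ for $n \in N$; this rests only on the identity $r(ng) = r(g)$, hence $p(ng) = n\,p(g)$.

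The crux---and the only place the hypotheses beyond skew-amenability of $G$ enter---is to show that $\tilde{f}$ is left-uniformly continuous whenever $f$ is. Here I would argue as follows. Given $\epsilon > 0$, pick $W \in \mathscr{U}(N)$ witnessing left-uniform continuity of $f$, and then invoke uniform equicontinuity of the conjugation action, in the form $\forall W \in \mathscr{U}(N)\ \exists V \in \mathscr{U}(N)\ \forall g \in G \colon gVg^{-1} \subseteq W$ (the characterization recorded before Remark~\ref{remark:vries}, valid since conjugation acts by automorphisms), to obtain $V \in \mathscr{U}(G)$ with $V \subseteq N$ and $gVg^{-1} \subseteq W$ for all $g \in G$. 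Now suppose $g^{-1}g' \in V$. Since $V \subseteq N$ and $N$ is normal, $g$ and $g'$ lie in the same right coset, so $r(g) = r(g') =: r$; writing $g = mr$ and $g' = m'r$ with $m,m' \in N$, one computes $m^{-1}m' = r\,(g^{-1}g')\,r^{-1} \in rVr^{-1} \subseteq W$, whence $\vert \tilde{f}(g) - \tilde{f}(g') \vert = \vert f(m) - f(m') \vert \leq \epsilon$. Thus $\tilde{f} \in \mathrm{LUCB}(G)$, and $T \colon \mathrm{LUCB}(N) \to \mathrm{LUCB}(G)$ is a well-defined positive unital operator.

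The main obstacle is exactly this uniform-continuity step: on the coset $Nr$ the function $\tilde{f}$ is a right-translate of $f$, so the neighborhood needed to control its oscillation is $r^{-1}Wr$, and without a neighborhood $V$ working simultaneously for all cosets $r$ the extension $\tilde{f}$ need not be left-uniformly continuous. Uniform equicontinuity of conjugation is precisely what furnishes such a uniform $V$; normality is what keeps $g$ and $g'$ in a single coset once $g^{-1}g' \in V \subseteq N$; and openness of $N$ is what makes $G/N$ discrete and lets us take $V \subseteq N$.

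Finally, taking any left-invariant mean $\mu$ on $\mathrm{LUCB}(G)$ (which exists as $G$ is skew-amenable), I would set $\nu \defeq \mu \circ T$. Positivity and unitality of $T$ make $\nu$ a mean on $\mathrm{LUCB}(N)$, and the equivariance $T(f \circ \lambda_{n}) = T(f) \circ \lambda_{n}$ together with left-invariance of $\mu$ gives $\nu(f \circ \lambda_{n}) = \mu(T(f) \circ \lambda_{n}) = \mu(T(f)) = \nu(f)$ for all $n \in N$ and $f \in \mathrm{LUCB}(N)$. Hence $\nu$ is a left-invariant mean on $\mathrm{LUCB}(N)$, so $N$ is skew-amenable.
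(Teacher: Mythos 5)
Your proof is correct and follows essentially the same route as the paper's: the paper likewise fixes a section $\tau \colon G/N \to G$ of the quotient map and defines $(Tf)(x) \defeq f\!\left(x\tau(Nx)^{-1}\right)$, which is exactly your $f \circ p$ with $p(g) = g\,r(g)^{-1}$. Its verification is the same as yours, point for point: $N$-equivariance of $T$ from $Ngx = Nx$ for $g \in N$, left-uniform continuity of $Tf$ from uniform equicontinuity of the conjugation action (applied to a neighborhood $V \cap N$, using openness of $N$ and normality to keep $x,y$ in one coset), and finally composing $T$ with a left-invariant mean on $\mathrm{LUCB}(G)$.
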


\begin{proof} Consider the projection $\pi_{N} \colon G \to G/N, \, x \mapsto Nx$. By the axiom of choice, there exists a map $\tau \colon G/N \to G$ such that ${\pi_{N}} \circ {\tau} = {\id_{G/N}}$. For each $f \in \mathbb{R}^{N}$, we define \begin{displaymath}
	Tf \colon \, G \, \longrightarrow \, \mathbb{R}, \quad x \, \longmapsto \, f\!\left( x\tau(Nx)^{-1} \right) .
\end{displaymath} Note that, if $f \in \mathbb{R}^{N}$ and $g \in N$, then \begin{align*}
	T(f \circ \lambda_{g})(x) \, & = \, (f \circ \lambda_{g})\!\left( x\tau(Nx)^{-1} \right) \, = \, f\!\left( gx\tau(Nx)^{-1} \right) \\
	& = \, f\!\left( gx\tau(Ngx)^{-1} \right) \, = \, (Tf)(gx) \, = \, ((Tf) \circ \lambda_{g})(x)
\end{align*} for every $x \in G$. That is, \begin{equation}\tag{1}\label{normal.equivariance}
	\forall f \in \mathbb{R}^{N} \ \forall g \in N \colon \quad T(f \circ \lambda_{g}) \, = \, (Tf) \circ \lambda_{g} .
\end{equation} We now prove that \begin{equation}\tag{2}\label{normal.continuity}
	\forall f \in \mathrm{LUCB}(N) \colon \quad Tf \in \mathrm{LUCB}(G) .
\end{equation} To this end, let $f \in \mathrm{LUCB}(N)$. Evidently, $\Vert Tf \Vert_{\infty} = \Vert f \Vert_{\infty} < \infty$ and therefore $Tf \in \ell^{\infty}(G)$. To show that $Tf \colon G \to \mathbb{R}$ is left-uniformly continuous, consider any $\epsilon > 0$. As $f \in \mathrm{LUCB}(N)$, there exists some $U \in \mathscr{U}(G)$ such that \begin{displaymath}
	\forall x,y \in N \colon \quad x^{-1}y \in U \ \Longrightarrow \ \vert f(x) - f(y) \vert \leq \epsilon .
\end{displaymath} Since the action of $G$ on $N$ by conjugation is uniformly equicontinuous, there exists $V \in \mathscr{U}(G)$ such that \begin{displaymath}
	\forall g \in G \colon \quad g(V \cap N)g^{-1} \subseteq U .
\end{displaymath} Thanks to $N$ being open in $G$, the set $V \cap N$ belongs to $\mathscr{U}(G)$, too. Now, if $x,y \in G$ and $x^{-1}y \in V \cap N$, then in particular $Nx=xN=yN=Ny$, so that furthermore \begin{displaymath}
	\left( x\tau(Nx)^{-1}\right)^{-1}\!\left( y \tau(Ny)^{-1} \right) \, = \, \tau( Nx) x^{-1}y \tau( Ny )^{-1} \, = \, \tau( Nx) x^{-1}y \tau(Nx) \, \in \, U
\end{displaymath} and thus \begin{displaymath}
	\vert (Tf)(x) - (Tf)(y) \vert \, = \, \left\lvert f\!\left( x\tau(Nx)^{-1} \right) - f\!\left( y\tau(Ny)^{-1} \right) \right\rvert \, \leq \, \epsilon .
\end{displaymath} This shows that $Tf \in \mathrm{LUCB}(G)$ and hence proves~\eqref{normal.continuity}. Due to~\eqref{normal.continuity}, the map \begin{displaymath}
	\Phi \colon \, \mathrm{LUCB}(N) \, \longrightarrow \, \mathrm{LUCB}(G) , \quad f \, \longmapsto \, Tf
\end{displaymath} is well defined. It is straightforward to verify that $\Phi$ is linear, positive, and unital. Therefore, if $\mu \colon \mathrm{LUCB}(G) \to \mathbb{R}$ is a left-invariant mean, then the composite $\mu \circ \Phi \colon \mathrm{LUCB}(N) \to \mathbb{R}$ constitutes a mean, which is left-invariant due to~\eqref{normal.equivariance} and left-invariance of $\mu$. Consequently, if $G$ is skew-amenable, then so is $N$. \end{proof}

\begin{lem}\label{lemma:direct.limits} The direct limit of any direct system of skew-amenable topological groups is skew-amenable. \end{lem}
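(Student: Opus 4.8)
The plan is to produce a left-invariant mean on $\mathrm{LUCB}(G)$ for the limit group $G$ as a weak-$\ast$ cluster point of means pulled back from the individual stages. Let $\bigl((G_{i})_{i \in I},(\phi_{ij})_{i \leq j}\bigr)$ be a direct system of skew-amenable topological groups indexed by an upward directed set $I$, and let $G = \varinjlim_{i \in I} G_{i}$ denote its direct limit in the category of topological groups, equipped with the canonical continuous homomorphisms $\phi_{i} \colon G_{i} \to G$ satisfying $\phi_{j} \circ \phi_{ij} = \phi_{i}$ whenever $i \leq j$. I would first record the two structural features of this colimit on which the whole argument rests: the underlying group of $G$ is the algebraic direct limit, so that $G = \bigcup_{i \in I} \phi_{i}(G_{i})$; and the family $(\phi_{i}(G_{i}))_{i \in I}$ of subgroups of $G$ is upward directed, since $i \leq j$ yields $\phi_{i}(G_{i}) = \phi_{j}(\phi_{ij}(G_{i})) \subseteq \phi_{j}(G_{j})$. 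In particular, each $g \in G$ lies in $\phi_{i}(G_{i})$ for all sufficiently large $i$.

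Next, for each $i$ I would pull means back along $\phi_{i}$, exactly as in the proof of Lemma~\ref{lemma:basics}(2): since $\phi_{i}$ is a continuous homomorphism, pre-composition preserves left-uniform continuity and boundedness, so
\[
	\Phi_{i} \colon \, \mathrm{LUCB}(G) \, \longrightarrow \, \mathrm{LUCB}(G_{i}), \qquad f \, \longmapsto \, f \circ \phi_{i}
\]
is a well-defined, positive, unital linear operator, and a direct computation gives $\Phi_{i}(f \circ \lambda_{\phi_{i}(a)}) = \Phi_{i}(f) \circ \lambda_{a}$ for every $a \in G_{i}$. Fixing a left-invariant mean $\nu_{i}$ on $\mathrm{LUCB}(G_{i})$, which exists because $G_{i}$ is skew-amenable, I would set $\mu_{i} \defeq \nu_{i} \circ \Phi_{i} \in \mathrm{M}(\mathrm{LUCB}(G))$. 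For any $g \in \phi_{i}(G_{i})$, writing $g = \phi_{i}(a)$ and combining the equivariance of $\Phi_{i}$ with left-invariance of $\nu_{i}$ yields $\mu_{i}(f \circ \lambda_{g}) = \nu_{i}(\Phi_{i}(f) \circ \lambda_{a}) = \nu_{i}(\Phi_{i}(f)) = \mu_{i}(f)$; that is, $\mu_{i}$ is invariant under all left-translations by elements of $\phi_{i}(G_{i})$.

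Finally, I would invoke weak-$\ast$ compactness of $\mathrm{M}(\mathrm{LUCB}(G))$ to select a weak-$\ast$ cluster point $\mu$ of the net $(\mu_{i})_{i \in I}$ and verify that it is genuinely left-invariant. Fix $f \in \mathrm{LUCB}(G)$ and $g \in G$, and choose $i_{0}$ with $g \in \phi_{i_{0}}(G_{i_{0}})$; then $g \in \phi_{i}(G_{i})$, and hence $\mu_{i}(f \circ \lambda_{g}) = \mu_{i}(f)$, for all $i \geq i_{0}$. Since $\{ i \in I \mid i \geq i_{0} \}$ is cofinal, $\mu$ remains a cluster point of the subnet $(\mu_{i})_{i \geq i_{0}}$, along which the weak-$\ast$ continuous functional $\nu \mapsto \nu(f \circ \lambda_{g}) - \nu(f)$ vanishes identically; being a cluster point of a net contained in the (closed) zero set of this functional, $\mu$ lies in that zero set, so $\mu(f \circ \lambda_{g}) = \mu(f)$. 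As $f$ and $g$ were arbitrary, $\mu$ is a left-invariant mean on $\mathrm{LUCB}(G)$, whence $G$ is skew-amenable. The only genuinely delicate point is the first paragraph: one must ensure that the direct-limit topology makes each $\phi_{i}$ continuous and that the underlying group is the algebraic colimit, guaranteeing $G = \bigcup_{i} \phi_{i}(G_{i})$ with directed images. Granting these standard facts about filtered colimits in the category of topological groups, the remainder is the routine ``invariance in the limit'' compactness argument.
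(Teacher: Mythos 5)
Your proposal is correct and follows essentially the same route as the paper: the paper obtains each mean $\mu_{i}$ by applying Proposition~\ref{proposition:skew.amenability} to the left-uniformly equicontinuous action $(g,x) \mapsto \phi_{i}(g)x$ of $G_{i}$ on $(G,\mathscr{E}_{\Lsh}(G))$, which amounts precisely to your pullback $\nu_{i} \circ \Phi_{i}$, and then extracts a weak-$\ast$ accumulation point of $(\mu_{i})_{i \in I}$ and verifies left-invariance via cofinality just as you do. The only cosmetic difference is that you inline the pullback construction and use exact invariance of the tail together with closedness of the zero set, whereas the paper cites the proposition and runs an $\epsilon$-approximation at the accumulation point.
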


\begin{proof} Consider a direct limit $G = \varinjlim\nolimits_{i \in I} G_{i}$ of skew-amenable topological groups $G_{i}$ $(i \in I)$. For each $i \in I$, the associated continuous homomorphism $\phi_{i} \colon G_{i} \to G$ induces an action \begin{displaymath}
	\alpha_{i} \colon \, G_{i} \times G \, \longrightarrow \, G , \quad (g,x) \, \longmapsto \, \phi_{i}(g) x ,
\end{displaymath} uniformly equicontinuous with respect to the left uniformity of $G$, and thus Proposition~\ref{proposition:skew.amenability} asserts the existence of an $\alpha_{i}$-invariant mean $\mu_{i} \in \mathrm{M}(\mathrm{LUCB}(G))$. As $\mathrm{M}(\mathrm{LUCB}(G))$ is compact with regard to the weak-$\ast$ topology (see, e.g., \cite[2.1, Theorem~1.8(i), p.~68]{AnalysisOnSemigroups}), the net $(\mu_{i})_{i \in I}$ admits a weak-$\ast$ accumulation point $\mu \in \mathrm{M}(\mathrm{LUCB}(G))$. A straightforward argument now shows that $\mu$ is left-invariant: indeed, for any $g \in G$, $f \in \mathrm{LUCB}(G)$ and $\epsilon > 0$, we find $i_{0} \in I$ with $g \in \phi_{i_{0}}(G_{i_{0}})$ and then $i_{1} \in I$ such that $i_{0} \leq i_{1}$ and \begin{displaymath}
	\max \! \left\{ \vert \mu(f) - \mu_{i}(f) \vert, \, \vert \mu(f \circ \lambda_{g}) - \mu_{i}(f \circ \lambda_{g}) \vert \right\} \! \, \leq \, \tfrac{\epsilon}{2} ,
\end{displaymath} which implies that \begin{align*}
	\vert \mu(f \circ \lambda_{g}) - \mu(f) \vert \, &\leq \, \vert \mu(f \circ \lambda_{g}) - \mu_{i}(f \circ \lambda_{g}) \vert + \vert \mu_{i}(f \circ \lambda_{g}) - \mu_{i}(f) \vert + \vert \mu_{i}(f) - \mu(f) \vert \, \leq \, \epsilon . 
\end{align*} Hence, $G$ is skew-amenable. \end{proof}

Our next objective concerns group extensions. Whereas the class of amenable topological groups is closed under extensions~\cite[Theorem~4.8]{rickert}, skew-amenability is not even preserved under semi-direct products in general (see Corollary~\ref{corollary:monod} and Corollary~\ref{corollary:semidirect.monod}). The following definition isolates a sufficient condition for extensions to preserve skew-amenability.

\begin{definition}\label{definition:moderate} A subgroup $N$ of a topological group $G$ is called \emph{moderate} (in $G$) if \begin{displaymath}
	\forall U \in \mathscr{U}(G) \colon \quad \left( \bigcap\nolimits_{g \in N} g^{-1}Ug \right) \! N \in \mathscr{U}(G) .
\end{displaymath} \end{definition}

\begin{remark}\label{remark:moderate} Let $G$ be a topological group and let $N \leq G$. \begin{itemize}
	\item[$(1)$] It is straightforward to verify that $N$ is moderate in $G$ if and only if \begin{displaymath}
					\forall U \in \mathscr{U}(G) \colon \quad N \! \left( \bigcap\nolimits_{g \in N} g^{-1}Ug \right) \in \mathscr{U}(G) .
				\end{displaymath}
	\item[$(2)$] If $N$ is precompact, then a standard argument (see, for instance, \cite[Lemma~3.7.7]{AT}) shows that \begin{displaymath}
					\forall U \in \mathscr{U}(G) \colon \quad \bigcap\nolimits_{g \in N} g^{-1}Ug \, \in \, \mathscr{U}(G) ,
				\end{displaymath} wherefore, in particular, $N$ is moderate in $G$.
	\item[$(3)$] If $N$ is contained in the center of $G$, then $N$ is moderate in $G$.
\end{itemize} \end{remark}

Here is a sufficient criterion for moderateness in the context of semi-direct products.

\begin{lem}\label{lemma:moderate} Let $G$ and $H$ be topological groups and let $\alpha \colon G \times H \to H$ be a bounded action of $G$ by automorphisms on $H$. Then $N \defeq H \times \{ e \}$ is moderate in $H \rtimes_{\alpha} G$. \end{lem}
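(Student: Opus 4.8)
The plan is to check the defining condition of moderateness only on a neighborhood basis of the identity in the semi-direct product $K \defeq H \rtimes_{\alpha} G$, and then, for each basic neighborhood, to pin down a single explicit element that survives all of the conjugations. Since $\mathscr{U}(K)$ is a filter and the products $V \times W$ with $V \in \mathscr{U}(H)$ and $W \in \mathscr{U}(G)$ form a neighborhood basis at the identity $(e,e)$ of $K$, and since $\bigl( \bigcap_{n \in N} n^{-1} U' n \bigr) N \subseteq \bigl( \bigcap_{n \in N} n^{-1} U n \bigr) N$ whenever $U' \subseteq U$, it suffices to prove that $\bigl( \bigcap_{n \in N} n^{-1}(V \times W) n \bigr) N \in \mathscr{U}(K)$ for every such basic $V \times W$.

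The key step is the conjugation formula. For $n = (h,e) \in N$ we have $n^{-1} = (h^{-1},e)$, because $\alpha_{e} = \id$, and a direct calculation in $K$ gives $n^{-1}(v,w)n = (h^{-1} v \alpha_{w}(h), w)$ for every $(v,w) \in V \times W$. Reading this off, the element $(e,w)$ lies in $n^{-1}(V \times W)n$ for \emph{all} $n = (h,e)$ simultaneously precisely when $w \in W$ and $h \alpha_{w}(h)^{-1} \in V$ for every $h \in H$.

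Here the hypothesis enters, and this is the heart of the argument. By boundedness of $\alpha$ with respect to the right uniformity on $H$, applied to the entourage $V_{\Rsh} = \{ (a,b) \in H \times H \mid ab^{-1} \in V \}$, there exists $W_{0} \in \mathscr{U}(G)$ such that $(h,\alpha_{w}(h)) \in V_{\Rsh}$, i.e.\ $h\alpha_{w}(h)^{-1} \in V$, for all $h \in H$ and all $w \in W_{0}$. Consequently $(e,w) \in \bigcap_{n \in N} n^{-1}(V \times W)n$ for every $w \in W \cap W_{0}$. Finally, right-multiplying $(e,w)$ by $(k,e) \in N$ yields $(\alpha_{w}(k),w)$, and since $\alpha_{w}$ is an automorphism of $H$ the set $\{(e,w)\} N$ equals the whole fibre $H \times \{ w \}$. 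Therefore $\bigl( \bigcap_{n \in N} n^{-1}(V \times W)n \bigr) N \supseteq H \times (W \cap W_{0})$, which is a neighborhood of $(e,e)$ in $K$, and moderateness follows. I expect the only genuine obstacle to be identifying the correct surviving element and recognising that boundedness is exactly the property making $(e,w)$ lie in every conjugate uniformly in $n$; once the conjugation formula $n^{-1}(v,w)n = (h^{-1}v\alpha_{w}(h),w)$ is in hand, the remaining steps are routine.
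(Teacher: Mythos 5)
Your proof is correct and follows essentially the same route as the paper's: both use boundedness of $\alpha$ (for the right uniformity, via the entourage $V_{\Rsh}$) to produce a neighborhood $W_{0} \in \mathscr{U}(G)$ on which $h\alpha_{w}(h)^{-1} \in V$ uniformly in $h$, identify $(e,w)$ as the element lying in every conjugate $n^{-1}(V \times W)n$ with $n \in N$, and then multiply by $N$ to recover the full fibre $H \times \{w\}$, so that $H \times (W \cap W_{0})$ witnesses moderateness. The paper packages this as the explicit decomposition $(h,u) = (e,u)\cdot(\alpha_{u^{-1}}(h),e)$ rather than your reduction to basic neighborhoods, but the computation and the role of boundedness are identical.
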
 

\begin{proof} Let $W \in \mathscr{U}(H \rtimes_{\alpha} G)$. Then there exist $U \in \mathscr{U}(G)$ and $V \in \mathscr{U}(H)$ with $V \times U \subseteq W$. Since $\alpha$ is bounded, there exists $U_{0} \in \mathscr{U}(G)$ such that $U_{0} \subseteq U$ and \begin{equation}\tag{$\ast$}\label{bounded}
	\forall u \in U_{0} \ \forall x \in H \colon \quad x \alpha_{u}(x)^{-1} \in V .
\end{equation} It now follows that \begin{equation}\tag{$\ast\ast$}\label{product}
	H \times U_{0} \, \subseteq \, \left( \bigcap\nolimits_{z \in N} z^{-1}Wz \right) \! N .
\end{equation} Indeed, if $(h,u) \in H \times U_{0}$, then $(\alpha_{u^{-1}}(h),e) \in N$, and \begin{align*}
	&(x,e)(h,u)(\alpha_{u^{-1}}(h),e)^{-1}(x,e)^{-1} \! \, = \, (x,e)\!\left(h\alpha_{u}\!\left(\alpha_{u^{-1}}(h)^{-1}\right)\!, u \right)\!(x,e)^{-1} \, = \, (x,e)(e,u)(x,e)^{-1} \\
	& \qquad \qquad \qquad  = \, (x,u)\!\left(x^{-1},e\right) \, = \, \left(x\alpha_{u}\!\left( x^{-1} \right)\!,u \right) \, = \, \left(x\alpha_{u}(x)^{-1},u\right) \, \stackrel{\eqref{bounded}}{\in} \, V \times U_{0} \, \subseteq \, W
\end{align*} for all $x \in H$, that is, \begin{displaymath}
	(h,u)(\alpha_{u^{-1}}(h),e)^{-1} \! \, \in \, \! \left( \bigcap\nolimits_{z \in N} z^{-1}Wz \right) ,
\end{displaymath} thus $(h,u) \in \left( \bigcap\nolimits_{z \in N} z^{-1}Wz \right) \! N$ as desired. Since $H \times U_{0} \in \mathscr{U}(H \rtimes_{\alpha} G)$, assertion~\eqref{product} readily entails that \begin{displaymath}
	\left( \bigcap\nolimits_{z \in N} z^{-1}Wz \right) \! N \in \mathscr{U}(H \rtimes_{\alpha} G) . \qedhere
\end{displaymath}\end{proof}

\begin{cor}\label{corollary:moderate} Let $G$ and $H$ be topological groups. Then $G \times \{ e \}$ is moderate in $G \times H$. \end{cor}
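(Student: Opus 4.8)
The plan is to exhibit the direct product $G \times H$ as a semi-direct product with trivial action and then to apply Lemma~\ref{lemma:moderate}. First I would consider the trivial action $\alpha \colon H \times G \to G$ of $H$ on $G$ by automorphisms, defined by $\alpha_{h} \defeq \id_{G}$ for all $h \in H$. I would check that this action is bounded: given any entourage $E$ of $G$, the reflexivity axiom for uniformities gives $(x,x) \in E$ for every $x \in G$, hence $(x,\alpha_{h}(x)) = (x,x) \in E$ for all $h \in H$ and $x \in G$, so that $U \defeq H \in \mathscr{U}(H)$ witnesses boundedness (and this works for either the left or the right uniformity on $G$).

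Next I would observe that, by the description of the semi-direct product in Remark~\ref{remark:semidirect.product}(1), the multiplication on $G \rtimes_{\alpha} H$ is given by $(g,h)(g',h') = (g\alpha_{h}(g'),hh') = (gg',hh')$, which coincides with the componentwise multiplication of $G \times H$. As both groups carry the product topology, this identifies $G \rtimes_{\alpha} H$ with $G \times H$ as topological groups, the normal factor corresponding precisely to $G \times \{ e \}$.

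Finally I would invoke Lemma~\ref{lemma:moderate} with the roles of the two groups interchanged, i.e.\ with $G$ in the position of the group acted upon and $H$ in the position of the acting group. This yields that $G \times \{ e \}$ is moderate in $G \rtimes_{\alpha} H = G \times H$, as desired. I do not expect any genuine obstacle here: the argument is a direct specialization of the preceding lemma, and the single substantive point is the boundedness of the trivial action, which is immediate from the diagonal being contained in every entourage.
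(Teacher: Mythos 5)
Your proposal is correct and takes essentially the same route as the paper, which states this corollary without proof as an immediate consequence of Lemma~\ref{lemma:moderate}: one realizes $G \times H$ as the semi-direct product $G \rtimes_{\alpha} H$ for the trivial action of $H$ on $G$, which is bounded precisely because every entourage contains the diagonal. Your spelled-out verification of boundedness and of the identification of the semi-direct product with the direct product is exactly the intended argument.
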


Our interest in moderate subgroups of topological groups is rooted in the following fact.

\begin{prop}\label{proposition:extensions} Let $N$ be a moderate, normal subgroup of a topological group $G$. If both $N$ and $G/N$ are skew-amenable, then $G$ is skew-amenable. \end{prop}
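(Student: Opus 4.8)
The plan is to build a left-invariant mean on $\mathrm{LUCB}(G)$ by iterating the two given means: first average a function over the cosets of $N$ with the mean on $N$, producing a right-$N$-invariant function that descends to $G/N$, and then apply the mean on $G/N$. So I would fix a left-invariant mean $m_N$ on $\mathrm{LUCB}(N)$ and a left-invariant mean $m_{G/N}$ on $\mathrm{LUCB}(G/N)$.

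For $f \in \mathrm{LUCB}(G)$ and $x \in G$, consider $f_{x} \colon N \to \mathbb{R}, \, n \mapsto f(xn)$. Since $(xn)^{-1}(xm) = n^{-1}m$, left-uniform continuity of $f$ together with Remark~\ref{remark:induced.uniformities}(1) shows that $f_{x} \in \mathrm{LUCB}(N)$, so I may define $\bar{f} \colon G \to \mathbb{R}, \, x \mapsto m_{N}(f_{x})$. Because $f_{xn_{0}} = f_{x} \circ \lambda_{n_{0}}$ for $n_{0} \in N$, left-invariance of $m_{N}$ gives $\bar{f}(xn_{0}) = \bar{f}(x)$; that is, $\bar{f}$ is right-$N$-invariant, and hence—once I know $\bar{f} \in \mathrm{LUCB}(G)$—it descends through $\pi_{N}$ to a unique $\check{f} \in \mathrm{LUCB}(G/N)$ by Remark~\ref{remark:induced.uniformities}(2).

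The crux is to verify $\bar{f} \in \mathrm{LUCB}(G)$, and this is exactly where moderateness enters. Given $\epsilon > 0$, I would pick $U \in \mathscr{U}(G)$ witnessing left-uniform continuity of $f$ and set $W \defeq \left( \bigcap_{g \in N} g^{-1}Ug \right) N$, which lies in $\mathscr{U}(G)$ by Definition~\ref{definition:moderate}. If $x^{-1}y \in W$, write $x^{-1}y = wn_{0}$ with $w \in \bigcap_{g \in N} g^{-1}Ug$ and $n_{0} \in N$; using right-$N$-invariance of $\bar{f}$ to replace $y$ by $yn_{0}^{-1}$, one reduces to the case $x^{-1}y = w$. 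Then for every $n \in N$ one has $(xn)^{-1}(yn) = n^{-1}wn \in U$ (taking $g = n^{-1}$ in the intersection), so $\vert f(xn) - f(yn) \vert \leq \epsilon$, whence $\vert \bar{f}(x) - \bar{f}(y) \vert \leq \sup_{n \in N} \vert f(xn) - f(yn) \vert \leq \epsilon$ because $m_{N}$ is a mean. I expect this step to be the main obstacle: without moderateness the set $\bigcap_{g \in N} g^{-1}Ug$ need not be a neighborhood of the identity, and the oscillation of $f$ along a coset could not be controlled. Moderateness is precisely what lets the $N$-factor be absorbed by the right-$N$-invariance of $\bar{f}$.

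Finally I would define $m \colon \mathrm{LUCB}(G) \to \mathbb{R}, \, f \mapsto m_{G/N}(\check{f})$. Linearity, positivity, and $m(\mathbf{1}) = 1$ follow from the corresponding properties of $m_{N}$ and $m_{G/N}$ together with the evident linearity and positivity of $f \mapsto \bar{f}$. For left-invariance, a direct computation gives $\overline{f \circ \lambda_{g}} = \bar{f} \circ \lambda_{g}$, and since $\pi_{N} \circ \lambda_{g} = \lambda_{gN} \circ \pi_{N}$ this descends to $\check{(f \circ \lambda_{g})} = \check{f} \circ \lambda_{gN}$; applying left-invariance of $m_{G/N}$ then yields $m(f \circ \lambda_{g}) = m(f)$ for all $g \in G$. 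Hence $m$ is a left-invariant mean on $\mathrm{LUCB}(G)$, so $G$ is skew-amenable.
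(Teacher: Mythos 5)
Your proposal is correct and follows essentially the same route as the paper's own proof: your $\bar{f}$ is exactly the paper's operator $T_{\mu}f$, you invoke moderateness at precisely the same point and with the same reduction (absorbing the $N$-factor of $x^{-1}y$ via right-$N$-invariance of $\bar{f}$ before conjugating $w$ into $U$), and you then descend to $G/N$ via Remark~\ref{remark:induced.uniformities}(2) and compose with the left-invariant mean on $\mathrm{LUCB}(G/N)$. No gaps; this matches the paper step for step.
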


\begin{proof} Let $\mu \colon \mathrm{LUCB}(N) \to \mathbb{R}$ be any left-invariant mean. According to Remark~\ref{remark:induced.uniformities}(1), for each $f \in \mathrm{LUCB}(G)$, a well-defined function $T_{\mu}f \colon G \to \mathbb{R}$ is given by \begin{displaymath}
	(T_{\mu}f)(g) \, \defeq \, \mu ((f \circ \lambda_{g})\vert_{N}) \qquad (g \in G) .
\end{displaymath} First of all, let us note that, for each $f \in \mathrm{LUCB}(G)$, left-invariance of $\mu$ implies that\begin{align*}
	(T_{\mu}f)(gh) \, &= \, \mu ((f \circ \lambda_{gh})\vert_{N}) \, = \, \mu ((f \circ \lambda_{g} \circ \lambda_{h})\vert_{N}) \\
	& = \, \mu ((f \circ \lambda_{g})\vert_{N} \circ \lambda_{h}) \, = \, \mu ((f \circ \lambda_{g})\vert_{N}) \, = \, (T_{\mu}f)(g) \tag{1}\label{invariance}
\end{align*} whenever $g \in G$ and $h \in N$. Next, we will show that \begin{equation}\tag{2}\label{lucb}
	\forall f \in \mathrm{LUCB}(G) \colon \quad T_{\mu}f \in \mathrm{LUCB}(G) .
\end{equation} To this end, let $f \in \mathrm{LUCB}(G)$. Of course, $T_{\mu}f$ is bounded. To prove left-uniform continuity, consider any $\epsilon > 0$. Since $f$ is left-uniformly continuous, there exists $U \in \mathscr{U}(G)$ such that $\Vert f - (f \circ \rho_{g}) \Vert_{\infty} \leq \epsilon$ for all $g \in U$. Thanks to $N$ being moderate, $V \defeq \left( \bigcap\nolimits_{x \in N} x^{-1}Ux\right) \! N$ belongs to $\mathscr{U}(G)$. Now, if $g,h \in G$ and $g^{-1}h \in V$, then we find $u \in \bigcap\nolimits_{x \in N} x^{-1}Ux$ and $z \in N$ with $g^{-1}h = uz^{-1}$, which implies that \begin{displaymath}
	(gx)^{-1}hzx \, = \, x^{-1}g^{-1}hzx \, = \, x^{-1}ux \, \in \, U ,
\end{displaymath} for all $x \in N$, and thus \begin{displaymath}
	\Vert (f \circ \lambda_{g})\vert_{N} - (f \circ \lambda_{hz})\vert_{N} \Vert_{\infty} \, = \, \sup\nolimits_{x \in N} \vert f(gx) - f(hzx) \vert \, \leq \, \epsilon ,
\end{displaymath} which then entails that \begin{displaymath}
	\vert (T_{\mu}f)(g) - (T_{\mu}f)(h) \vert \, = \, \vert (T_{\mu}f)(g) - (T_{\mu}f)(hz) \vert \, = \, \vert \mu ((f \circ \lambda_{g})\vert_{N} - (f \circ \lambda_{hz})\vert_{N}) \vert \, \leq \, \epsilon .
\end{displaymath} This proves~\eqref{lucb}. Thanks to~\eqref{invariance}, for each $f \in \mathrm{LUCB}(G)$, \begin{displaymath}
	S_{\mu}f \colon \, G/N \, \longrightarrow \, \mathbb{R} , \quad gN \, \longmapsto \, (T_{\mu}f)(g)
\end{displaymath} is a well-defined function. Using~\eqref{lucb} and Remark~\ref{remark:induced.uniformities}(2), we furthermore conclude that \begin{displaymath}
	S_{\mu} \colon \, \mathrm{LUCB}(G) \, \longrightarrow \, \mathrm{LUCB}(G/N), \qquad f \, \longmapsto \, S_{\mu}f
\end{displaymath} is a well-defined, positive, unital, linear operator. Also, if $g \in G$ and $f \in \mathrm{LUCB}(G)$, then \begin{align*}
	S_{\mu}(f \circ \lambda_{g})(hN) \, & = \, T_{\mu}(f \circ \lambda_{g})(hN) \, = \, \mu ((f \circ {\lambda_{g}} \circ {\lambda_{h}})\vert_{N}) \, = \, \mu ((f \circ {\lambda_{gh}})\vert_{N}) \\
	& = \, (T_{\mu}f)(gh) \, = \, (S_{\mu}f)(ghN) \, = \, \left({(S_{\mu}f)} \circ {\lambda_{\pi_{N}(g)}}\right)\!(hN)
\end{align*} for all $h \in G$, that is, \begin{equation}\tag{3}\label{equivariance}
	S_{\mu}(f \circ {\lambda_{g}}) \, = \, {(S_{\mu}f)} \circ {\lambda_{\pi_{N}(g)}} .
\end{equation} Finally, picking any left-invariant mean $\nu \colon \mathrm{LUCB}(G/N) \to \mathbb{R}$, we observe that the resulting mean $\xi \defeq \nu \circ {S_{\mu}} \colon \mathrm{LUCB}(G) \to \mathbb{R}$ is also left-invariant: indeed, \begin{displaymath}
	\xi (f \circ \lambda_{g}) \, = \, \nu (S_{\mu}(f \circ \lambda_{g})) \, \stackrel{\eqref{equivariance}}{=} \, \nu \!\left((S_{\mu}f) \circ \lambda_{\pi_{N}(g)}\right) \! \, = \, \nu (S_{\mu}f) \, = \, \xi (f) 
\end{displaymath} for all $f \in \mathrm{LUCB}(G)$ and $g \in G$. Thus, $G$ is skew-amenable. \end{proof}

Taking Remark~\ref{remark:moderate}(3) into account, we see that Proposition~\ref{proposition:extensions} entails the following result by Pestov~\cite[Proposition~16]{Pestov2020}: any central extension of skew-amenable topological groups is skew-amenable. We record some more consequences of Proposition~\ref{proposition:extensions}.

\begin{cor}\label{corollary:semidirect.products}  Let $G$ and $H$ be topological groups and let $\alpha \colon G \times H \to H$ be a bounded action of $G$ by automorphisms on $H$. If both $G$ and $H$ are skew-amenable, then so is $H \rtimes_{\alpha} G$. \end{cor}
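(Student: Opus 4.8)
The plan is to deduce Corollary~\ref{corollary:semidirect.products} directly from Proposition~\ref{proposition:extensions} by identifying a suitable moderate normal subgroup. The natural candidate is $N \defeq H \times \{ e \}$ inside the semi-direct product $H \rtimes_{\alpha} G$. First I would observe that $N$ is a normal subgroup of $H \rtimes_{\alpha} G$: this is a standard feature of semi-direct products, since $(h',g')(h,e)(h',g')^{-1}$ lands back in $H \times \{ e \}$. Next, I would invoke Lemma~\ref{lemma:moderate}, which asserts precisely that under the boundedness hypothesis on $\alpha$, the subgroup $N = H \times \{ e \}$ is moderate in $H \rtimes_{\alpha} G$. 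This is the key structural input and is where the boundedness assumption on the action is used.

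Having secured moderateness and normality of $N$, the remaining task is to verify the two skew-amenability hypotheses of Proposition~\ref{proposition:extensions}, namely that both $N$ and the quotient $(H \rtimes_{\alpha} G)/N$ are skew-amenable. For $N$, I would note that the canonical map $H \to N = H \times \{ e \}$, $h \mapsto (h,e)$, is a topological group isomorphism onto $N$ equipped with the subspace topology inherited from $H \rtimes_{\alpha} G$; since skew-amenability is an isomorphism invariant and $H$ is skew-amenable by hypothesis, so is $N$. For the quotient, I would use that the projection $H \rtimes_{\alpha} G \to G$, $(h,g) \mapsto g$, is a surjective continuous homomorphism with kernel exactly $N$, hence induces a topological group isomorphism $(H \rtimes_{\alpha} G)/N \cong G$; as $G$ is skew-amenable by hypothesis, so is the quotient.

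With both $N$ and $(H \rtimes_{\alpha} G)/N$ skew-amenable and $N$ moderate and normal, Proposition~\ref{proposition:extensions} applies verbatim and yields that $H \rtimes_{\alpha} G$ is skew-amenable, completing the argument. The proof is essentially a routine assembly of the preceding lemmas, so there is no genuine obstacle here; the only point requiring minor care is the verification that the subspace and quotient topologies identify $N$ with $H$ and $(H \rtimes_{\alpha} G)/N$ with $G$ as \emph{topological} groups, which follows immediately from the product topology on the semi-direct product. The real content of the corollary has already been absorbed into Lemma~\ref{lemma:moderate} (the moderateness of $N$, powered by boundedness of $\alpha$) and into Proposition~\ref{proposition:extensions} (the permanence of skew-amenability under moderate normal extensions).
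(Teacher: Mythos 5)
Your proposal is correct and is essentially identical to the paper's own proof: the paper likewise takes $N \defeq H \times \{ e \}$, invokes Lemma~\ref{lemma:moderate} for moderateness, identifies $N \cong H$ and $(H \rtimes_{\alpha} G)/N \cong G$ as topological groups, and concludes via Proposition~\ref{proposition:extensions}. The extra care you take in verifying the topological identifications is implicit in the paper's one-line argument but adds no new content.
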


\begin{proof} According to Lemma~\ref{lemma:moderate}, the normal subgroup $N \defeq H \times \{ e \}$ is moderate in $H \rtimes_{\alpha} G$. Since the topological groups $N \cong H$ and $(H \rtimes_{\alpha} G)/N \cong G$ are skew-amenable, $H \rtimes_{\alpha} G$ is skew-amenable by Proposition~\ref{proposition:extensions}. \end{proof}

\begin{cor}\label{corollary:precompact.extension} Let $N$ be a precompact normal subgroup of a topological group $G$. If $G/N$ is skew-amenable, then so is $G$. \end{cor}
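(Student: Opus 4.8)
The plan is to deduce the statement from Proposition~\ref{proposition:extensions}, for which I must verify that the normal subgroup $N$ is both \emph{moderate} in $G$ and skew-amenable, the skew-amenability of $G/N$ being granted by hypothesis. Moderateness is immediate: by Remark~\ref{remark:moderate}(2), every precompact subgroup of a topological group is moderate in it. Thus the entire difficulty is concentrated in the auxiliary claim that a precompact topological group is skew-amenable.

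To prove that $N$ is skew-amenable, I would first reduce to the Hausdorff case. Since every left-uniformly continuous bounded function on $N$ is constant on the cosets of $\overline{\{e\}}$, the left-translation module $\mathrm{LUCB}(N)$ is canonically identified with $\mathrm{LUCB}(N/\overline{\{e\}})$, so that $N$ is skew-amenable if and only if the precompact Hausdorff group $N/\overline{\{e\}}$ is. Assuming $N$ Hausdorff, I would then invoke Weil's classical theorem that a precompact Hausdorff group embeds as a dense subgroup of a compact group $K$ (its completion with respect to the two-sided uniformity). This group $K$ is skew-amenable: being compact, $\mathrm{LUCB}(K) = C(K)$, and normalized Haar measure furnishes a bi-invariant mean, in particular a left-invariant one. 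Since $N$ sits densely inside $K$, Lemma~\ref{lemma:basics}(1) then yields that $N$ itself is skew-amenable.

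The main obstacle is precisely this verification that precompact groups are skew-amenable, and it is worth stressing why one cannot simply transcribe the Kakutani argument of Remark~\ref{remark:precompact.amenable}. There, amenability followed from the weak-$\ast$ continuity of the affine left-translation action on $\mathrm{M}(\mathrm{RUCB}(G))$, which rests on the norm-continuity of $g \mapsto f \circ \lambda_{g}$ for $f \in \mathrm{RUCB}(G)$. For $f \in \mathrm{LUCB}(G)$ this norm-continuity fails, because the relevant translates differ by conjugates $y^{-1}gy$ that are not uniformly small in the absence of a SIN-type hypothesis; hence the corresponding action on $\mathrm{M}(\mathrm{LUCB}(G))$ need not be jointly continuous, and Kakutani's theorem no longer applies. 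This asymmetry is exactly what forces the detour through the compact completion (equivalently, through the fact that precompact Hausdorff groups, being subgroups of compact groups, are SIN, so that amenability and skew-amenability coincide for them). With $N$ moderate, normal, and skew-amenable, and $G/N$ skew-amenable by assumption, Proposition~\ref{proposition:extensions} then delivers the skew-amenability of $G$.
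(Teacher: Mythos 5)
Your proof is correct, and it follows the paper's skeleton at the top level --- moderateness of $N$ via Remark~\ref{remark:moderate}(2), skew-amenability of $N$, then Proposition~\ref{proposition:extensions} --- but your argument for the key sub-claim (precompact groups are skew-amenable) is genuinely different from the paper's. The paper stays internal: precompactness of $N$ yields, via the standard conjugation argument cited in Remark~\ref{remark:moderate}(2), that $\bigcap_{g \in N} g^{-1}Ug \in \mathscr{U}(G)$ for every $U \in \mathscr{U}(G)$; restricting to $N$ this shows $N$ is itself a SIN group, so $\mathrm{RUCB}(N) = \mathrm{LUCB}(N)$ and skew-amenability of $N$ reduces to its amenability, which is exactly Remark~\ref{remark:precompact.amenable} (the Kakutani fixed-point argument). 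You instead pass to the Hausdorff quotient $N/\overline{\{e\}}$ (a legitimate reduction, since $\mathrm{LUCB}(N)$ identifies equivariantly with $\mathrm{LUCB}(N/\overline{\{e\}})$ by Remark~\ref{remark:induced.uniformities}(2) and continuity of $\mathrm{LUCB}$-functions), invoke Weil's completion theorem to embed it densely into a compact group $K$, note that $\mathrm{LUCB}(K) = C(K)$ carries the Haar mean, and pull skew-amenability back along Lemma~\ref{lemma:basics}(1). Both routes are sound; yours imports heavier external machinery (Weil completions, existence of Haar measure) and needs the Hausdorff reduction, whereas the paper's needs neither, reusing only results already established in the text. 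Your diagnosis of why Kakutani cannot be applied directly to $\mathrm{M}(\mathrm{LUCB}(G))$ is accurate, and indeed your own parenthetical observation --- that precompact groups are SIN, so amenability and skew-amenability coincide for them --- \emph{is} the paper's argument; pursuing it directly would have made the completion detour (and the Hausdorff reduction it forces) unnecessary.
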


\begin{proof} Being a precompact topological group, $N$ is amenable by Remark~\ref{remark:precompact.amenable}. Moreover, as noted in Remark~\ref{remark:moderate}(2), precompactness of $N$ entails that \begin{equation}\label{statement}\tag{$\ast$}
	\forall U \in \mathscr{U}(G) \colon \quad \bigcap\nolimits_{g \in N} g^{-1}Ug \, \in \, \mathscr{U}(G) .
\end{equation} It follows from~\eqref{statement} that $N$ has small invariant neighborhoods. Therefore, $N$ is skew-amenable. Furthermore, \eqref{statement} implies that $N$ is moderate in $G$. Since $G/N$ is skew-amenable, too, we may apply Proposition~\ref{proposition:extensions} to conclude that $G$ is skew-amenable. \end{proof}

Of course, Corollary~\ref{corollary:semidirect.products} particularly implies the following.

\begin{cor}\label{corollary:direct.products} If two topological groups $G$ and $H$ are skew-amenable, then so is $G \times H$. \end{cor}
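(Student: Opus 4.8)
The plan is to deduce this immediately from Corollary~\ref{corollary:semidirect.products} by specializing to the trivial action. First I would consider the trivial action $\alpha \colon G \times H \to H$ given by $\alpha_{g} \defeq \id_{H}$ for every $g \in G$, which is manifestly an action of $G$ by automorphisms on $H$. With respect to this action, the multiplication on the semi-direct product $H \rtimes_{\alpha} G$ reduces to $(h,g) \cdot (h',g') = (h\alpha_{g}(h'),gg') = (hh',gg')$, so that $H \rtimes_{\alpha} G$ is precisely the direct product $H \times H$ carrying the product topology, which is in turn isomorphic as a topological group to $G \times H$ via the evident coordinate swap.

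The only point requiring verification is that the trivial action $\alpha$ is bounded in the sense recalled before Remark~\ref{remark:vries}. This is immediate: for any entourage $E$ of the uniformity on $H$, the diagonal $\{ (x,x) \mid x \in H \}$ is contained in $E$, so taking $U \defeq G \in \mathscr{U}(G)$ yields $(x,\alpha_{g}(x)) = (x,x) \in E$ for all $g \in U$ and $x \in H$. Hence $\alpha$ is bounded (and, as noted in the excerpt, boundedness with respect to the right and left uniformities of $H$ coincide, so there is no ambiguity here).

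Therefore, since both $G$ and $H$ are assumed skew-amenable, Corollary~\ref{corollary:semidirect.products} applies and shows that $H \rtimes_{\alpha} G = H \times G \cong G \times H$ is skew-amenable. I do not anticipate any genuine obstacle: the statement is a direct specialization of the semi-direct product result to the trivial action, the sole routine checks being the identification of $H \rtimes_{\alpha} G$ with $G \times H$ and the (trivial) boundedness of the identity action.
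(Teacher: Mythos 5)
Your proposal is correct and takes essentially the same route as the paper, which derives this corollary directly from Corollary~\ref{corollary:semidirect.products} by regarding $G \times H$ as the semi-direct product with respect to the trivial (hence trivially bounded) action. The only blemish is a harmless typo where you write that $H \rtimes_{\alpha} G$ is ``the direct product $H \times H$'' instead of $H \times G$; the remainder of your argument already treats the identification correctly.
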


We finish this section by collecting some additional permanence properties.

\begin{prop}\label{proposition:inverse.limits} The inverse limit of any inverse system of skew-amenable topological groups whose limit projections have dense images is skew-amenable. \end{prop}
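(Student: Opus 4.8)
The plan is to imitate the proof of the amenable analogue mentioned right after Lemma~\ref{lemma:directed.union.amenable}, replacing $\mathrm{RUCB}$ by $\mathrm{LUCB}$ and the right uniformity by the left one, and to feed the outcome into Lemma~\ref{lemma:directed.union.amenable} applied to the left-translation action of $G$ on $X \defeq G$. Write $G = \varprojlim_{i \in I} G_{i}$ with limit projections $p_{i} \colon G \to G_{i}$ (continuous homomorphisms with dense image) and transition homomorphisms $p_{ji} \colon G_{j} \to G_{i}$ satisfying $p_{i} = p_{ji} \circ p_{j}$ for $i \leq j$. For each $i$, put
\[
	A_{i} \, \defeq \, \{ f \circ p_{i} \mid f \in \mathrm{LUCB}(G_{i}) \} \, \subseteq \, \mathrm{LUCB}(G) ,
\]
which is legitimate since a continuous homomorphism is left-uniformly continuous, so that pulling back along $p_{i}$ maps $\mathrm{LUCB}(G_{i})$ into $\mathrm{LUCB}(G)$. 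Each $A_{i}$ is a unital, left-invariant linear subspace (using $p_{i} \circ \lambda_{g} = \lambda_{p_{i}(g)} \circ p_{i}$), and $A_{i} \subseteq A_{j}$ for $i \leq j$, because $f \circ p_{i} = (f \circ p_{ji}) \circ p_{j}$ with $f \circ p_{ji} \in \mathrm{LUCB}(G_{j})$. As $I$ is directed, $\mathscr{A} \defeq \{ A_{i} \mid i \in I \}$ is upward directed with $\mathbf{1} \in \bigcap \mathscr{A}$. Since $p_{i}$ has dense image, the pullback $f \mapsto f \circ p_{i}$ is an isometric isomorphism $\mathrm{LUCB}(G_{i}) \cong A_{i}$; transporting through it a left-invariant mean $\mu_{i}$ on $\mathrm{LUCB}(G_{i})$ (which exists by skew-amenability of $G_{i}$) yields a mean on $A_{i}$ that is invariant under the left-translation action, as one checks directly from left-invariance of $\mu_{i}$ and the identity $(f \circ p_{i}) \circ \lambda_{g} = (f \circ \lambda_{p_{i}(g)}) \circ p_{i}$.

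With this in place, Lemma~\ref{lemma:directed.union.amenable} immediately produces a left-invariant mean on the norm-closure of $\bigcup_{i} A_{i}$ in $\ell^{\infty}(G)$. The remaining, and crucial, point is the density claim $\overline{\bigcup_{i} A_{i}} = \mathrm{LUCB}(G)$, after which $\mathrm{LUCB}(G)$ admits a left-invariant mean and $G$ is skew-amenable by definition.

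To prove density I would first record that the left uniformity of $G$ is the initial uniformity induced by the projections, i.e. $\mathscr{E}_{\Lsh}(G) = \bigcup_{i} \mathscr{D}_{i}$, where $\mathscr{D}_{i} \defeq (p_{i} \times p_{i})^{-1}(\mathscr{E}_{\Lsh}(G_{i}))$ form an increasing chain of uniformities (increasing because each $p_{ji}$ is left-uniformly continuous, so that any entourage of $\mathscr{E}_{\Lsh}(G)$ controlling a given function already lives at a single stage $\mathscr{D}_{i}$). One checks that $A_{i}$ is exactly the set of bounded $\mathscr{D}_{i}$-uniformly continuous functions: such a function is constant on the fibres of $p_{i}$, descends to a uniformly continuous function on the dense subgroup $p_{i}(G) \leq G_{i}$, and extends to an element of $\mathrm{LUCB}(G_{i})$ via the extension theorem invoked in Remark~\ref{remark:induced.uniformities}(1). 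Now let $h \in \mathrm{LUCB}(G)$ and $\epsilon > 0$, and choose an entourage $E \in \mathscr{E}_{\Lsh}(G)$ with $\vert h(x) - h(y) \vert \leq \epsilon$ whenever $(x,y) \in E$; then $E \in \mathscr{D}_{i}$ for some $i$. By the standard metrization lemma for uniformities, $\mathscr{D}_{i}$ contains a uniformly continuous pseudometric $\rho$ with $\{ \rho < 1 \} \subseteq E$. Setting $M \defeq \sup h - \inf h$, the inf-convolution
\[
	g(x) \, \defeq \, \inf\nolimits_{y \in G} \bigl( h(y) + M \rho(x,y) \bigr)
\]
is $M$-Lipschitz for $\rho$, hence bounded and $\mathscr{D}_{i}$-uniformly continuous, so $g \in A_{i}$; and the routine two-case estimate (distinguishing $\rho(x,y) < 1$ from $\rho(x,y) \geq 1$) gives $g(x) \leq h(x)$ together with $g(x) \geq h(x) - \epsilon$, whence $\Vert h - g \Vert_{\infty} \leq \epsilon$. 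Thus $\bigcup_{i} A_{i}$ is norm-dense in $\mathrm{LUCB}(G)$, completing the argument.

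I expect this density step to be the main obstacle. The tempting shortcut---replacing $h$ by its infimum over each fibre of a single $p_{i}$---does yield a fibre-constant function within $\epsilon$ of $h$, but that function carries only the one modulus of continuity coming from the chosen $\epsilon$ and is generally \emph{not} uniformly continuous on $G_{i}$ (finer moduli of $h$ would require passing to larger indices), so it need not lie in any $A_{i}$. The inf-convolution against a pseudometric drawn from $\mathscr{D}_{i}$ is precisely what repairs this defect, manufacturing a genuinely $\mathscr{D}_{i}$-Lipschitz approximant. Everything else---the directedness of $\mathscr{A}$, the isometric identification $\mathrm{LUCB}(G_{i}) \cong A_{i}$, and the transport of invariant means---is straightforward and uses no Hausdorffness or reducedness of the inverse limit, only that the topology of $G$ is initial with respect to the $p_{i}$ and that these have dense image.
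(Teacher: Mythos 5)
Your proof is correct and follows essentially the same route as the paper's: pull back $\mathrm{LUCB}(G_{i})$ along the limit projections to obtain an upward directed family of unital, left-invariant subspaces of $\mathrm{LUCB}(G)$, each admitting an invariant mean transported from $G_{i}$, and then apply Lemma~\ref{lemma:directed.union.amenable}. The only difference is that the paper asserts the density $\overline{\bigcup_{i} A_{i}} = \mathrm{LUCB}(G)$ without justification, whereas you supply a complete and correct proof of it (identification of $\mathscr{E}_{\Lsh}(G)$ as the directed union of the pullback uniformities, the extension theorem from Remark~\ref{remark:induced.uniformities}(1), and the metrization-lemma/inf-convolution approximation), which is precisely the step the paper leaves implicit.
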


\begin{proof} Consider an inverse limit \begin{displaymath}
	G \, = \, \varprojlim\nolimits_{i \in I} G_{i}
\end{displaymath} of skew-amenable topological groups $G_{i}$ $(i \in I)$. Suppose that, for each $i \in I$, the image of the corresponding limit projection $\pi_{i} \colon G \to G_{i}$ is dense in $G_{i}$. Then, for every $i \in I$, the map \begin{displaymath}
	\Phi_{i} \colon \, \mathrm{LUCB}(G_{i}) \, \longrightarrow \, \mathrm{LUCB}(G), \quad f \, \longmapsto \, f \circ \pi_{i}
\end{displaymath} is an embedding of unital real Banach algebras satisfying \begin{displaymath}
	\forall g \in G \ \forall f \in \mathrm{LUCB}(G_{i}) \colon \quad \Phi_{i}\!\left(f \circ \lambda_{\pi_{i}(g)}\right) \, = \, \Phi_{i}(f) \circ \lambda_{g} ,
\end{displaymath} whence skew-amenability of $G_{i}$ implies the existence of a left-invariant mean on the subspace $A_{i} \defeq \Phi_{i}(\mathrm{LUCB}(G_{i})) \subseteq \mathrm{LUCB}(G)$. Since $\mathscr{A} \defeq \{ A_{i} \mid i \in I \}$ is upward directed, Lemma~\ref{lemma:directed.union.amenable} thus asserts that the norm-closure $\overline{\bigcup \mathscr{A}} = \mathrm{LUCB}(G)$ admits a left-invariant mean. \end{proof}

\begin{cor}\label{corollary:infinite.direct.products} If $(G_{i})_{i \in I}$ is a family of skew-amenable topological groups, then the direct product $\prod_{i \in I} G_{i}$ is skew-amenable. \end{cor}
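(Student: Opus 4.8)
The plan is to realise the full direct product as an inverse limit of its finite subproducts and then to invoke Proposition~\ref{proposition:inverse.limits}.

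First I would record that \emph{finite} direct products of skew-amenable topological groups are skew-amenable: this follows by an immediate induction on the number of factors from Corollary~\ref{corollary:direct.products}, the empty product being the trivial group, which is clearly skew-amenable. Hence, for every $J \in \Pfin (I)$, the topological group $\prod_{j \in J} G_{j}$ is skew-amenable.

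Next I would organise these finite subproducts into an inverse system. Order $\Pfin (I)$ by inclusion; for $J \subseteq J'$ in $\Pfin (I)$, let the bonding map $\prod_{j \in J'} G_{j} \to \prod_{j \in J} G_{j}$ be the coordinate restriction $(x_{j})_{j \in J'} \mapsto (x_{j})_{j \in J}$. A compatible family for this system is precisely a point of $\prod_{i \in I} G_{i}$, so the canonical bijection identifies $\prod_{i \in I} G_{i}$ with the inverse limit $\varprojlim\nolimits_{J \in \Pfin (I)} \prod_{j \in J} G_{j}$. This identification is moreover a homeomorphism: a subbasic open set of the product topology on $\prod_{i \in I} G_{i}$ is cut out by a single coordinate, hence is the preimage under some limit projection $\pi_{J}$ of an open subset of $\prod_{j \in J} G_{j}$, so that the product topology coincides with the inverse-limit topology.

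Finally, each limit projection $\pi_{J} \colon \prod_{i \in I} G_{i} \to \prod_{j \in J} G_{j}$ is surjective—any $(x_{j})_{j \in J}$ extends to an element of $\prod_{i \in I} G_{i}$ by filling in the remaining coordinates with the neutral element—and therefore has dense image. Since every $\prod_{j \in J} G_{j}$ is skew-amenable by the first step, Proposition~\ref{proposition:inverse.limits} applies and yields skew-amenability of $\prod_{i \in I} G_{i}$. There is no genuine obstacle here; the only point requiring a moment's care is the identification of the product topology with the inverse-limit topology, which is routine.
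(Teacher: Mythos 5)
Your proof is correct and follows exactly the paper's intended route: the paper's own (one-line) proof cites Corollary~\ref{corollary:direct.products} and Proposition~\ref{proposition:inverse.limits}, and your argument simply spells out the details---finite subproducts are skew-amenable by induction, the full product is the inverse limit of these with surjective limit projections, and Proposition~\ref{proposition:inverse.limits} finishes the job. No discrepancies to report.
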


\begin{proof} This is an immediate consequence of Corollary~\ref{corollary:direct.products} and Proposition~\ref{proposition:inverse.limits}. \end{proof}

\begin{cor}\label{corollary:infinite.direct.sums} If $(G_{i})_{i \in I}$ is a family of skew-amenable topological groups, then $\bigoplus_{i \in I} G_{i}$ is skew-amenable with respect to the subspace topology inherited from $\prod_{i \in I} G_{i}$. \end{cor}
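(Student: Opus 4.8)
The plan is to obtain $\bigoplus_{i \in I} G_{i}$ as a dense subgroup of an already-established skew-amenable group and then invoke Lemma~\ref{lemma:basics}(1). The natural ambient group is the full direct product $\prod_{i \in I} G_{i}$, which is skew-amenable by Corollary~\ref{corollary:infinite.direct.products}. Equipping $\bigoplus_{i \in I} G_{i}$ with the subspace topology inherited from $\prod_{i \in I} G_{i}$ turns it into a topological subgroup, so the only substantive thing to check is density.

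For the density step, I would argue directly from the definition of the product topology. Fix $x = (x_{i})_{i \in I} \in \prod_{i \in I} G_{i}$ and a basic open neighborhood $W$ of $x$. Such a $W$ is of the form $\prod_{i \in I} U_{i}$, where each $U_{i}$ is open in $G_{i}$, $x_{i} \in U_{i}$, and $U_{i} = G_{i}$ for all $i$ outside some finite set $F \subseteq I$. Defining $y = (y_{i})_{i \in I}$ by $y_{i} \defeq x_{i}$ for $i \in F$ and $y_{i} \defeq e$ for $i \in I \setminus F$, one has $y_{i} \in U_{i}$ for every $i$ (using $e \in G_{i} = U_{i}$ when $i \notin F$), so $y \in W$; and since the support of $y$ is contained in the finite set $F$, we have $y \in \bigoplus_{i \in I} G_{i}$. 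Hence every basic open subset of $\prod_{i \in I} G_{i}$ meets $\bigoplus_{i \in I} G_{i}$, establishing density.

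With density in hand, the conclusion is immediate: $\prod_{i \in I} G_{i}$ is skew-amenable and $\bigoplus_{i \in I} G_{i}$ is a dense subgroup of it, so Lemma~\ref{lemma:basics}(1) yields that $\bigoplus_{i \in I} G_{i}$ is skew-amenable. I do not anticipate a genuine obstacle here; the entire argument rests on the routine density verification, and the real work has already been carried out in Corollary~\ref{corollary:infinite.direct.products} (itself resting on Proposition~\ref{proposition:inverse.limits}) and in Lemma~\ref{lemma:basics}(1).
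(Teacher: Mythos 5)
Your proof is correct and follows exactly the paper's own route: the paper likewise deduces the corollary from Corollary~\ref{corollary:infinite.direct.products} together with Lemma~\ref{lemma:basics}(1), leaving the (routine) density of $\bigoplus_{i \in I} G_{i}$ in $\prod_{i \in I} G_{i}$ implicit, which you have simply spelled out.
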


\begin{proof} This is follows by Corollary~\ref{corollary:infinite.direct.products} and Lemma~\ref{lemma:basics}(1). \end{proof}

\section{Proximal simulation}\label{section:proximal.simulation}

The subject of this section is to establish a feasible sufficient condition for skew-amenability, Proposition~\ref{proposition:weak.proximal.simulation.implies.amenability}, and to deduce some variations and consequences in the context of transformation groups. The key concept for Proposition~\ref{proposition:weak.proximal.simulation.implies.amenability} will be the notion of (\emph{weak}) \emph{proximal simulation}.

\begin{definition}\label{definition:proximal.simulation} Let $G$ be a topological group and consider a subgroup $H \leq G$. We say that \begin{itemize}
	\item[$(1)$\,] $H$ \emph{proximally simulates} $G$ if, for every $E \in \Pfin (G)$, there is $(h_{g})_{g \in E} \in H^{E}$ such that \begin{displaymath}
						\qquad \forall U \in \mathscr{U}(G) \ \forall S \in \Pfin (G) \ \exists t \in G \ \forall g \in E \ \forall s \in S \colon \quad gst \in h_{g}stU .
					\end{displaymath}
	\item[$(2)$\,] $H$ \emph{weakly proximally simulates} $G$ if, for every $E \in \Pfin (G)$, every $F \in \Pfin (\mathrm{LUCB}(G))$ and every $\epsilon > 0$, there exists $(h_{g})_{g \in E} \in H^{E}$ such that \begin{displaymath}
		\qquad \forall S \in \Pfin(H) \ \exists t \in G \ \forall g \in E \ \forall s \in S \ \forall f \in F \colon \quad \vert f(gst) - f(h_{g}st) \vert < \epsilon .
	\end{displaymath}
\end{itemize}\end{definition}

In other words, a topological group $G$ is proximally simulated by a subgroup $H \leq G$ if and only if every finite subset $E \subseteq G$ admits a family $h_{g} \in H$~$(g \in E)$ such that, for every finite subset $S \subseteq G$ and every $U \in \mathscr{U}(G)$, there exists some right-translate $St$ in $G$ such that, restricted to~$St$, each of the left-translations $(\lambda_{g})_{g \in E}$ is $U_{\Lsh}$-close to the corresponding one of $\left(\lambda_{h_{g}}\right)\!_{g \in E}$. Similarly, a topological group $G$ is weakly proximally simulated by a subgroup $H \leq G$ if and only if, for any two finite subsets $E \subseteq G$ and $F \subseteq \mathrm{LUCB}(G)$ and every $\epsilon > 0$, there exists a family of elements $h_{g} \in H$ $(g \in E)$ such that every finite subset $S \subseteq H$ admits some right-translate $St$ in $G$ such that, restricted to $St$, each of the left-translations $(\lambda_{g})_{g \in E}$ is $(F,\epsilon)$-close to the corresponding one of $\left(\lambda_{h_{g}}\right)\!_{g \in E}$.

Evidently, proximal simulation implies weak proximal simulation. Furthermore, let us note that a discrete group $G$ is proximally simulated by a subgroup $H \leq G$ if and only if $H = G$. The statement of Proposition~\ref{proposition:weak.proximal.simulation.implies.amenability} will only require the assumption of weak proximal simulation. However, Lemma~\ref{lemma:approximation.implies.proximal.simulation}---our main source for instances of this phenomenon---provides the stronger conclusion of proximal simulation.

In order to add a dynamical reformulation of Definition~\ref{definition:proximal.simulation}, let us briefly recall some related terminology. To this end, let $G$ be a group acting by isomorphisms on a uniform space $X$. Given an entourage $U$ of $X$, a pair $(x,y) \in X \times X$ will be called \emph{$U$-proximal} with respect to the action of $G$ on $X$ if there exists $g \in G$ such that $(gx,gy) \in U$. As is customary, a pair $(x,y) \in X \times X$ is called \emph{proximal} with respect to the action of $G$ on $X$ if, for every entourage of $X$, the pair $(x,y)$ is $U$-proximal with respect to the action of $G$ on $X$. The following remark connects the dynamical concept of proximality with Definition~\ref{definition:proximal.simulation}.

\begin{remark} Let $G$ be a topological group. For any set $X$, the direct power $G^{X}$ forms a topological group with respect to the associated product topology, and the action of $G$ by right-translations on $G^{X}$, i.e., \begin{displaymath}
	G \times G^{X} \! \, \longrightarrow \, G^{X}, \quad (g,(h_{x})_{x \in X}) \, \longmapsto \, \left(h_{x}g^{-1}\right)_{x \in X} ,
\end{displaymath} constitutes a (bounded) action of $G$ by isomorphisms on the uniform space $G^{X}$ carrying the left uniformity $\mathscr{E}_{\Lsh}\!\left( G^{X}\right)$. Now, define $\alpha \colon G \times G \to G, \, (x,y) \mapsto xy$ and let $H \leq G$. Then \begin{itemize}
	\item[$(1)$\,] $H$ proximally simulates $G$ if and only if, for every $E \in \Pfin (G)$, there exists $\pi \in H^{E}$ such that the pair $(\alpha\vert_{E \times G},\alpha \circ (\pi \times \id))$ is proximal with respect to the action of $G$ by right-translations on the uniform space $\left( G^{E \times G}, \mathscr{E}_{\Lsh}\!\left( G^{E \times G} \right) \right)$.
	\item[$(2)$\,] $H$ weakly proximally simulates $G$ if and only if, for every $E \in \Pfin (G)$, every $\epsilon > 0$ and every $F \in \Pfin (\mathrm{LUCB}(G))$, there exists $\pi \in H^{E}$ such that, for every $S \in \Pfin(H)$, the pair $(\alpha\vert_{E \times G} , \alpha \circ (\pi \times \id ))$ is $V(S,F,\epsilon)$-proximal with respect to the action of $G$ by right-translations on $G^{E \times G}$, where \begin{displaymath}
						\qquad \quad V(S,F,\epsilon) \, \defeq \, \! \left. \left\{ (\phi,\psi) \in G^{E \times G} \, \right\vert \forall z \in E \times S \, \forall f \in F \colon \, \vert f(\phi(z)) - f(\psi(z)) \vert < \epsilon \right\} .
					\end{displaymath}
\end{itemize} \end{remark}

The relevant fact about (weak) proximal simulation that will be used to establish skew-amenability for concrete examples of topological groups in Sections~\ref{section:monod} and~\ref{section:thompson} is the following: every topological group weakly proximally simulated by some discretely amenable subgroup is necessarily skew-amenable. This is precisely the statement of Corollary~\ref{corollary:weak.proximal.simulation.implies.amenability}. To provide a comprehensive picture, we will deduce Corollary~\ref{corollary:weak.proximal.simulation.implies.amenability} from the more general Proposition~\ref{proposition:weak.proximal.simulation.implies.amenability}, the statement of which additionally involves the following piece of terminology.

\begin{definition}\label{definition:balanced.homomorphism} Let $G$ and $H$ be topological groups. A homomorphism $\psi \colon H \to G$ will be called \emph{balanced} if \begin{displaymath}
	\forall U \in \mathscr{U}(G) \colon \quad \bigcap\nolimits_{g \in G} \psi^{-1} \!\left( g^{-1}Ug \right) \! \, \in \, \mathscr{U}(H) .
\end{displaymath} \end{definition}

Of course, every balanced homomorphism between topological groups is necessarily continuous. Moreover, we note that a homomorphism $\psi \colon H \to G$ between topological groups $G$ and $H$ is balanced, for instance, if \begin{itemize}
	\item[---\,] the topology of $H$ is discrete, or
	\item[---\,] $\psi$ is continuous and $G$ is a SIN group, or
	\item[---\,] $H$ is a SIN group and $\psi$ is continuous and surjective.
\end{itemize} Evidently, the identity map from a topological group $G$ into itself is balanced if and only if $G$ is a SIN group.

\begin{lem}\label{lemma:balanced.homomorphism} Let $G$ and $H$ be topological groups, and let $\psi \colon H \to G$ be a balanced homomorphism. For every $f \in \mathrm{LUCB}(G)$, \begin{displaymath}
	\{ f \circ \rho_{g} \circ \psi \mid g \in G \} \, \in \, \mathrm{RUEB}(H) \, .
\end{displaymath} \end{lem}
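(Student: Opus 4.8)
We have topological groups $G$ and $H$, a balanced homomorphism $\psi\colon H\to G$, and a fixed $f\in\mathrm{LUCB}(G)$. We must show that the set $\{f\circ\rho_g\circ\psi\mid g\in G\}$ is a UEB subset of $\mathrm{UCB}(H,\mathscr{E}_{\Rsh}(H))$, i.e., an element of $\mathrm{RUEB}(H)$. By the definition of $\mathrm{UEB}$, this requires two things: that the family is $\Vert\cdot\Vert_\infty$-bounded, and that it is right-uniformly equicontinuous on $H$. The boundedness is immediate, since $\Vert f\circ\rho_g\circ\psi\Vert_\infty\leq\Vert f\Vert_\infty$ for every $g\in G$ — right-translation and composition with $\psi$ only permute/restrict values of $f$.

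**The plan for equicontinuity.** The substance is the right-uniform equicontinuity of the whole family, uniformly in $g$. Using the characterization recalled in the preliminaries, I must produce, for each $\epsilon>0$, an entourage of $\mathscr{E}_{\Rsh}(H)$ — equivalently some $W\in\mathscr{U}(H)$ — such that $\vert (f\circ\rho_g\circ\psi)(x)-(f\circ\rho_g\circ\psi)(y)\vert\leq\epsilon$ whenever $xy^{-1}\in W$, simultaneously for all $g\in G$. First I would invoke left-uniform continuity of $f$ to extract $U\in\mathscr{U}(G)$ with $\vert f(a)-f(b)\vert\leq\epsilon$ whenever $a^{-1}b\in U$. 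Then, by hypothesis, $\psi$ is balanced, so $V\defeq\bigcap_{g\in G}\psi^{-1}(g^{-1}Ug)\in\mathscr{U}(H)$. The claim is that $W\defeq V$ works.

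**Carrying out the estimate.** Suppose $x,y\in H$ with $xy^{-1}\in V$; I want to bound $\vert f(\psi(x)g)-f(\psi(y)g)\vert$ for arbitrary $g\in G$. The left-uniform continuity of $f$ measures closeness of the arguments $\psi(x)g$ and $\psi(y)g$ through $(\psi(x)g)^{-1}(\psi(y)g)=g^{-1}\psi(x)^{-1}\psi(y)g=g^{-1}\psi(x^{-1}y)g$. Here the right-translation by $g$ has turned into a conjugation by $g$ inside the group, which is exactly the feature the balanced condition is designed to absorb — this is the conceptual heart of the argument and the one place where naive continuity of $\psi$ would fail. The one subtlety to watch is the sidedness: the balanced condition controls $\psi(x^{-1}y)$ (with $x^{-1}y\in V$), whereas my entourage on $H$ is the \emph{right} uniformity, giving $xy^{-1}\in V$. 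I expect this to be handled by noting $V$ is symmetric under the necessary manipulation, or more robustly by choosing $W$ so that $xy^{-1}\in W$ forces $\psi(y)^{-1}\psi(x)$ (or its inverse) into $U$ after conjugation; concretely, since $(\psi(y)g)^{-1}(\psi(x)g)=g^{-1}\psi(y^{-1}x)g$ and $y^{-1}x=(x^{-1}y)^{-1}$, taking $V$ symmetric (intersect with $V^{-1}$, still a neighborhood of the identity) lets me assume $y^{-1}x\in V\subseteq\psi^{-1}(g^{-1}Ug)$, hence $\psi(y^{-1}x)\in g^{-1}Ug$, i.e.\ $g\psi(y^{-1}x)g^{-1}\in U$.

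**The remaining reconciliation.** The last step is to match the form $g\psi(y^{-1}x)g^{-1}\in U$ against the quantity $(\psi(y)g)^{-1}(\psi(x)g)=g^{-1}\psi(y^{-1}x)g$ that left-uniform continuity wants in $U$; these differ by replacing $g$ with $g^{-1}$, which is harmless since in the definition of the balanced homomorphism the intersection runs over \emph{all} $g\in G$, so $g^{-1}$ ranges over $G$ as well and $V\subseteq\psi^{-1}(g^{-1}Ug)$ holds for every $g$. Thus $\psi(y^{-1}x)\in gUg^{-1}$ gives $g^{-1}\psi(y^{-1}x)g\in U$ directly by applying the membership at $g^{-1}$ in place of $g$. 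Then left-uniform continuity of $f$ yields $\vert f(\psi(x)g)-f(\psi(y)g)\vert\leq\epsilon$, uniformly in $g\in G$, which is exactly right-uniform equicontinuity of the family. Combined with the boundedness observed at the outset, this establishes $\{f\circ\rho_g\circ\psi\mid g\in G\}\in\mathrm{RUEB}(H)$. The main obstacle is purely bookkeeping: tracking the inversion and the two-sided nature of the uniformities so that the conjugation produced by right-translation lands inside the set controlled by the balanced condition; no deep idea beyond the definition of \emph{balanced} is needed.
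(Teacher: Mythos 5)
There is a genuine gap, and it sits exactly at the point you flagged as ``the one subtlety to watch.'' Your hypothesis, coming from the right uniformity on $H$, is $xy^{-1}\in W$ with $W=V\cap V^{-1}$ symmetric. Symmetry of $W$ converts this into $yx^{-1}=(xy^{-1})^{-1}\in W$, but it does \emph{not} give $y^{-1}x\in V$ (equivalently $x^{-1}y\in V$), which is what your estimate needs: indeed $y^{-1}x=y^{-1}(xy^{-1})y$ is a conjugate of $xy^{-1}$ by the \emph{variable} element $y\in H$, not its inverse. Absorbing that conjugation uniformly in $y$ would require $\bigcap_{y\in H}y^{-1}Wy\in\mathscr{U}(H)$, i.e.\ that $H$ has small invariant neighborhoods --- an assumption the lemma does not make (and in the SIN case, e.g.\ $H$ discrete, the whole lemma is easy). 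So what your argument actually proves is that the family is \emph{left}-uniformly equicontinuous on $H$ (hypothesis $x^{-1}y\in V$), whereas membership in $\mathrm{RUEB}(H)$ requires right-uniform equicontinuity; the passage between the two is precisely what fails for general $H$, and general $H$ is what Proposition~\ref{proposition:weak.proximal.simulation.implies.amenability} needs.

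The repair is not to conjugate by $g$ but by an element depending on $x$ as well: factor $\psi(y)=\psi(yx^{-1})\psi(x)$ and write
\begin{displaymath}
(\psi(x)g)^{-1}\psi(y)g \,=\, (\psi(x)g)^{-1}\,\psi\!\left(yx^{-1}\right)(\psi(x)g) .
\end{displaymath}
Now the quantity that left-uniform continuity of $f$ must place in $U$ appears as a conjugate of $\psi(yx^{-1})$ by $\psi(x)g\in G$, and $yx^{-1}\in V$ is exactly the right-uniformity hypothesis you have. Since the balanced condition quantifies over \emph{all} conjugators in $G$ --- including ones depending on $x$ --- taking the member of the intersection indexed by $(\psi(x)g)^{-1}$ gives $(\psi(x)g)^{-1}\psi(yx^{-1})(\psi(x)g)\in U$, and the estimate $\lvert f(\psi(x)g)-f(\psi(y)g)\rvert\leq\epsilon$ follows uniformly in $g$. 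This is the paper's proof; your use of the balanced condition with the reindexing $g\mapsto g^{-1}$ is fine, but without this refactoring the sidedness mismatch cannot be bridged.
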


\begin{proof} Let $f \in \mathrm{LUCB}(G)$. Evidently, $F \defeq \{ f \circ \rho_{g} \circ \psi \mid g \in G \}$ is norm-bounded. In order to prove that $F$ is right-uniformly equicontinuous, let $\epsilon > 0$. Since $f \in \mathrm{LUCB}(G)$, there exists some $U \in \mathscr{U}(G)$ such that \begin{displaymath}
	\forall x,y \in G \colon \qquad x^{-1}y \in U \ \Longrightarrow \ \vert f(x) - f(y) \vert \, \leq \, \epsilon \, .
\end{displaymath} As $\psi$ is balanced, we conclude that \begin{displaymath}
	V \, \defeq \, \bigcap\nolimits_{g \in G} \psi^{-1} \!\left( g^{-1}Ug \right) \! \, \in \, \mathscr{U}(H) \, .
\end{displaymath} Now, if $x,y \in H$ and $yx^{-1} \in V$, then for every $g \in G$ it follows that \begin{displaymath}
	(\psi (x)g)^{-1}\psi(y)g \, = \, (\psi(x)g)^{-1}\psi\!\left(yx^{-1}\right)\! (\psi(x)g) \, \in \, U 
\end{displaymath} and thus \begin{displaymath}
	\vert (f \circ \rho_{g} \circ \psi)(x) - (f \circ \rho_{g} \circ \psi)(y) \vert \, = \, \vert f(\psi(x)g) - f(\psi(y)g) \vert \, \leq \, \epsilon .
\end{displaymath} This shows that $F \in \mathrm{RUEB}(H)$, as desired. \end{proof}

Everything is prepared now to state and prove the above-mentioned Proposition~\ref{proposition:weak.proximal.simulation.implies.amenability}.

\begin{prop}\label{proposition:weak.proximal.simulation.implies.amenability} Let $\psi \colon H \to G$ be a balanced homomorphism between topological groups $G$ and $H$. If $H$ is amenable and $\psi (H)$ weakly proximally simulates $G$, then $G$ is skew-amenable. \end{prop}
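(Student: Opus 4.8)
The plan is to build a left-invariant mean on $\mathrm{LUCB}(G)$ by transporting an invariant mean from the amenable group $H$ through the balanced homomorphism $\psi$, using weak proximal simulation to correct for the discrepancy between left-translating by $g \in E$ and left-translating by the simulating element $h_g \in \psi(H)$. By Theorem~\ref{theorem:pachl}, it suffices to show that for every $\epsilon > 0$, every $H_0 \in \mathrm{LUEB}(G)$, and every $E \in \Pfin(G)$, there exists $\mu \in \Delta(G)$ with $\sup_{g \in E}\sup_{f \in H_0}\lvert \mu(f) - \mu(f \circ \lambda_g)\rvert \leq \epsilon$. Since $\mathrm{LUEB}(G)$ is a bornology of bounded equicontinuous families, I would first reduce to finite $F \subseteq H_0$ (approximating uniformly), so the task becomes producing $\mu \in \Delta(G)$ that is almost $\lambda_g$-invariant on a finite $F \subseteq \mathrm{LUCB}(G)$ for all $g \in E$.

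First I would apply weak proximal simulation to the data $E$, $F$, $\epsilon$ (suitably rescaled), choosing $(h_g)_{g \in E} \in \psi(H)^E$ with the approximation property: for every $S \in \Pfin(H)$ there is $t \in G$ making $\lvert f(gst) - f(h_g st)\rvert$ small for all $g \in E$, $s \in S$, $f \in F$. Each $h_g$ lifts to some $\tilde h_g \in H$ with $\psi(\tilde h_g) = h_g$. Next, since $H$ is amenable, $\mathrm{RUCB}(H)$ carries a left-invariant mean, equivalently (by Remark~\ref{remark:basic}) an object I can use to approximate invariance by finitely supported probability measures on $H$: by the appropriate Reiter/Følner-type approximation, for the finite family $\{\tilde h_g \mid g \in E\}$ and the finite family of functions on $H$ obtained by pulling back $F$ via $f \mapsto f \circ \rho_g \circ \psi$, I obtain $\nu \in \Delta(H)$ that is almost invariant under left-translation by each $\tilde h_g$. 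Here is where Lemma~\ref{lemma:balanced.homomorphism} is essential: it guarantees that $\{ f \circ \rho_g \circ \psi \mid g \in G\} \in \mathrm{RUEB}(H)$, so the amenability of $H$ (via its Følner-type characterization on $\mathrm{RUEB}$) actually produces a $\nu$ whose almost-invariance is uniform over the right-translate parameter.

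Now I would assemble the mean on $G$. Let $S \defeq \spt(\nu) \in \Pfin(H)$ and feed this $S$ into the weak proximal simulation conclusion to obtain the corresponding $t \in G$. Define $\mu \in \Delta(G)$ by pushing $\nu$ forward along $s \mapsto st$, i.e. $\mu \defeq \sum_{s \in S}\nu(s)\,\delta_{st}$. For $g \in E$ and $f \in F$ I would estimate $\lvert \mu(f) - \mu(f \circ \lambda_g)\rvert$ through a chain of three comparisons: (i) $\mu(f \circ \lambda_g) = \sum_s \nu(s) f(gst)$ is close to $\sum_s \nu(s) f(h_g st)$ by weak proximal simulation; (ii) $\sum_s \nu(s) f(h_g st) = \nu\bigl((f \circ \rho_t) \circ \lambda_{h_g}\bigr)$, rewritten via $h_g = \psi(\tilde h_g)$ as a value of the pulled-back function $f\circ\rho_t\circ\psi$ left-translated by $\tilde h_g$, which is close to $\nu(f \circ \rho_t \circ \psi) = \mu(f)$ by the almost $\tilde h_g$-invariance of $\nu$; (iii) these combine to bound the total discrepancy by $\epsilon$. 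Passing to a weak-$\ast$ accumulation point of such $\mu$ over the directed set of data $(\epsilon, H_0, E)$, or invoking Theorem~\ref{theorem:pachl} directly, yields the left-invariant mean and hence skew-amenability.

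The main obstacle I anticipate is the bookkeeping in step (ii): matching the pullback functions $f \circ \rho_t \circ \psi$ (whose right-translation parameter $t$ depends on $S = \spt(\nu)$, which in turn depends on $\nu$) against a choice of $\nu$ made \emph{before} $t$ is determined. This circularity must be broken by choosing $\nu$ with almost-invariance that is uniform over \emph{all} right-translates simultaneously — precisely what Lemma~\ref{lemma:balanced.homomorphism} secures by placing the whole family $\{f \circ \rho_g \circ \psi \mid g \in G\}$ into a single $\mathrm{RUEB}(H)$ set, so that the Følner/Reiter approximation for $H$ can be applied once to that equicontinuous family rather than separately for each $t$. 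Verifying that the quantifier order in Definition~\ref{definition:proximal.simulation}(2) (where $t$ is chosen after $S$, but $S$ is finite) meshes with this uniform choice of $\nu$ is the delicate point.
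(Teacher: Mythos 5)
Your proposal is correct and follows essentially the same route as the paper's proof: apply weak proximal simulation to $(E,F,\tfrac{\epsilon}{2})$, use Lemma~\ref{lemma:balanced.homomorphism} to place the whole family $\{ f \circ \rho_{t} \circ \psi \mid f \in F,\, t \in G \}$ into a single $\mathrm{RUEB}(H)$ set, invoke the F\o lner-type characterization of amenability of $H$ (\cite[Theorem~3.2]{SchneiderThomFolner}) to obtain $\nu \in \Delta(H)$ almost invariant under the $\lambda_{\tilde h_{g}}$ \emph{uniformly in the right-translate parameter $t$}, then set $S \defeq \spt(\nu)$, extract $t$ from the simulation hypothesis, push forward to $\sum_{s \in S} \nu(s)\,\delta_{\psi(s)t} \in \Delta(G)$, and conclude by the triangle inequality --- including the quantifier resolution ($\nu$ chosen before $t$) that you correctly single out as the crux. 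One caveat: your opening reduction of the $\mathrm{LUEB}$ criterion of Theorem~\ref{theorem:pachl} to finite families ``by approximating uniformly'' is not valid, since an $\mathrm{LUEB}$ set need not be totally bounded in $\Vert \cdot \Vert_{\infty}$ (for discrete $G$ the entire unit ball of $\ell^{\infty}(G)$ is $\mathrm{LUEB}$), so you should discard that step and rely solely on your alternative ending --- a weak-$\ast$ accumulation point over the directed set of finite data $(\epsilon, E, F)$ --- which is exactly how the paper argues, via weak-$\ast$ compactness of $\mathrm{M}(\mathrm{LUCB}(G))$, without invoking Theorem~\ref{theorem:pachl} at all.
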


\begin{proof} Since $\mathrm{M}(\mathrm{LUCB}(G))$ is compact with respect to the weak-$\ast$ topology (see, e.g., \cite[2.1, Theorem~1.8(i), p.~68]{AnalysisOnSemigroups}), it suffices to prove that \begin{align*}
	\forall \epsilon > 0 \ \forall E \in \Pfin (G) \ \forall F \in \Pfin &(\mathrm{LUCB}(G)) \ \exists \nu \in \Delta (G) \colon \\
	& \sup\nolimits_{g \in E} \sup\nolimits_{f \in F} \vert \nu(f) - \nu (f \circ \lambda_{g}) \vert \, \leq \, \epsilon \, .
\end{align*} To this end, consider any $\epsilon > 0$, $E \in \Pfin (G)$ and $F \in \Pfin (\mathrm{LUCB}(G))$. Since $\psi (H)$ weakly proximally simulates $G$, there exists a family $(h_{g})_{g \in E} \in H^{E}$ such that \begin{equation}\tag{$\ast$}\label{weak.proximal.simulation}
	\forall S \in \Pfin (H) \ \exists t \in G \ \forall g \in E \ \forall s \in S \ \forall f \in F \colon \quad \vert f(g\psi(s)t) - f( \psi(h_{g})\psi(s)t) \vert \leq \tfrac{\epsilon}{2} \, .
\end{equation} Also, by Lemma~\ref{lemma:balanced.homomorphism} and $\psi$ being balanced, \begin{displaymath}
	\{ f \circ \rho_{t} \circ \psi \mid f \in F, \, t \in G \} \, \in \, \mathrm{RUEB}(H) \, .
\end{displaymath} Due to~\cite[Theorem~3.2]{SchneiderThomFolner}, amenability of $H$ implies the existence of $\mu \in \Delta (H)$ such that \begin{displaymath}
	\forall g \in E \ \forall f \in F \ \forall t \in G \colon \quad \left\lvert \mu (f \circ \rho_{t} \circ \psi) - \mu \!\left(f \circ \rho_{t} \circ \psi \circ \lambda_{h_{g}}\right) \right\rvert \, \leq \, \tfrac{\epsilon}{2} \, .
\end{displaymath} Consider $S \defeq \spt (\mu)$. Thanks to~\eqref{weak.proximal.simulation}, there exists $t \in G$ such that \begin{displaymath}
	\forall g \in E \ \forall s \in S \ \forall f \in F \colon \quad \vert f(g\psi(s)t) - f( \psi(h_{g})\psi(s)t) \vert \leq \tfrac{\epsilon}{2} \, .
\end{displaymath} Let $\nu \defeq \sum_{s \in S} \mu (s) \delta_{\psi(s)t} \in \Delta (G)$. By the above, if $g \in E$, then \begin{align*}
	\left\lvert \nu\! \left(f \circ \lambda_{g}\right) - \nu \!\left(f \circ \lambda_{\psi (h_{g})}\right) \right\rvert \, & = \, \left\lvert \sum\nolimits_{s \in S} \mu(s) f(g\psi(s)t) - \sum\nolimits_{s \in S} \mu(s) f(\psi(h_{g})\psi(s)t) \right\rvert \\
	& \leq \, \sum\nolimits_{s \in S} \mu (s) \! \left\lvert f(g\psi(s)t) - f\!\left(\psi(h_{g})\psi(s)t\right) \right\rvert \, \leq \, \tfrac{\epsilon}{2}
\end{align*} for every $f \in F$. Therefore, we conclude that \begin{align*}
	\vert \nu (f) - \nu (f \circ \lambda_{g}) \vert \, &\leq \, \left\lvert \nu (f) - \nu \!\left(f \circ \lambda_{\psi (h_{g})}\right) \right\rvert + \tfrac{\epsilon}{2} \\
	& = \, \left\lvert \mu (f \circ \rho_{t} \circ \psi) - \mu \! \left(f \circ \rho_{t} \circ \psi \circ \lambda_{h_{g}}\right) \right\rvert + \tfrac{\epsilon}{2} \, \leq \, \tfrac{\epsilon}{2} + \tfrac{\epsilon}{2} \, = \, \epsilon
\end{align*} for all $g \in E$ and $f \in F$, as desired. \end{proof}

\begin{cor}\label{corollary:weak.proximal.simulation.implies.amenability} If a topological group $G$ is weakly proximally simulated by some discretely amenable subgroup, then $G$ is skew-amenable. \end{cor}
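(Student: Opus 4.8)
The plan is to deduce the corollary as a direct specialisation of Proposition~\ref{proposition:weak.proximal.simulation.implies.amenability}. Suppose $H_{0} \leq G$ is a discretely amenable subgroup that weakly proximally simulates $G$. I would regard $H_{0}$ as a topological group $H$ by endowing it with the discrete topology, and take $\psi \colon H \to G$ to be the inclusion homomorphism. It then suffices to check that the triple $(\psi, H, G)$ satisfies the three hypotheses of the proposition, after which skew-amenability of $G$ follows immediately.

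There are three points to verify. First, $H$ is amenable as a topological group: since $H$ carries the discrete topology and $H_{0}$ is amenable as a discrete group, this is immediate (for discrete $H$ one has $\mathrm{RUCB}(H) = \ell^{\infty}(H)$, so Rickert's criterion reduces to ordinary amenability). Second, $\psi$ is balanced, as already recorded in the list of examples preceding Lemma~\ref{lemma:balanced.homomorphism}: for any $U \in \mathscr{U}(G)$ the set $\bigcap_{g \in G} \psi^{-1}\!\left(g^{-1}Ug\right)$ contains the identity of $H$ (because $e_{G} \in g^{-1}Ug$ for every $g \in G$), and hence lies in $\mathscr{U}(H)$ since $H$ is discrete. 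Third, $\psi(H) = H_{0}$ weakly proximally simulates $G$, which is precisely the hypothesis.

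The only subtlety worth flagging—and the reason there is no genuine obstacle here—is that weak proximal simulation is a purely group-theoretic, topology-free property of the pair $(G, H_{0})$: inspecting Definition~\ref{definition:proximal.simulation}(2), the simulating subgroup enters only through its finite subsets $S \in \Pfin(H_{0})$ and through the chosen elements $h_{g} \in H_{0}$, with all the analytic content carried by $G$ and by $F \in \Pfin(\mathrm{LUCB}(G))$. Consequently the assumption that $H_{0}$ weakly proximally simulates $G$ is unaffected by passing to the discrete topology on $H_{0}$, so it transfers verbatim to the hypothesis on $\psi(H)$ demanded by the proposition. With all three conditions in place, Proposition~\ref{proposition:weak.proximal.simulation.implies.amenability} yields that $G$ is skew-amenable, completing the argument.
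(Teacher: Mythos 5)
Your proposal is correct and coincides with the paper's (implicit) derivation: the corollary is obtained from Proposition~\ref{proposition:weak.proximal.simulation.implies.amenability} by equipping the simulating subgroup with the discrete topology and taking $\psi$ to be the inclusion, which is balanced precisely because homomorphisms with discrete domain are balanced (as noted in the list preceding Lemma~\ref{lemma:balanced.homomorphism}). Your observation that weak proximal simulation is insensitive to the topology on the simulating subgroup is the right point to flag, and it indeed makes the specialisation immediate.
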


Before proceeding to further consequences of Proposition~\ref{proposition:weak.proximal.simulation.implies.amenability}, for potential future application we record an analogue of this result concerning (usual) amenability of topological groups. To this end, we will make use of the following standard construction. If $G$ is a topological group, $\mu \in \mathrm{RUCB}(G)^{\ast}$ and $f \in \mathrm{RUCB}(G)$, then the function \begin{displaymath}
	R_{\mu}f \colon \, G \, \longrightarrow \, \mathbb{R}, \quad g \, \longmapsto \, \mu\!\left(f \circ \lambda_{g}\right)
\end{displaymath} belongs to $\mathrm{RUCB}(G)$ (see, e.g.,~\cite[Lemma~3.7(2)]{SchneiderThomRandomWalks} for the proof of a stronger statement). With this additional notation, we formulate the desired counterpart of Proposition~\ref{proposition:weak.proximal.simulation.implies.amenability}.

\begin{prop}\label{proposition:amenable.counterpart} Let $H$ be a subgroup of a topological group $G$ such that \begin{equation}\tag{$\ddagger$}\label{new}
	\begin{split}
	&\forall E \in \Pfin (G) \ \forall F \in \Pfin (\mathrm{RUCB}(G)) \ \forall \epsilon > 0 \ \exists (h_{g})_{g \in E} \in H^{E} \ \forall S \in \Pfin(H) \\
	&\quad \exists \mu \in \mathrm{M}(\mathrm{RUCB}(G)) \ \forall g \in E \ \forall s \in S \ \forall f \in F \colon \quad \vert (R_{\mu}f)(gs) - (R_{\mu}f)(h_{g}s) \vert < \epsilon .
	\end{split}
\end{equation} If the topological subgroup $H$ is amenable, then the topological group $G$ is amenable. \end{prop}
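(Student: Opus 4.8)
The plan is to mirror the proof of Proposition~\ref{proposition:weak.proximal.simulation.implies.amenability}, replacing the $\mathrm{LUCB}$-machinery by $\mathrm{RUCB}$-machinery and invoking Rickert's theorem in place of the definition of skew-amenability. Since $\mathrm{M}(\mathrm{RUCB}(G))$ is weak-$\ast$ compact and, by Rickert~\cite[Theorem~4.2]{rickert}, $G$ is amenable precisely when $\mathrm{RUCB}(G)$ carries a left-invariant mean, it suffices to establish the approximate left-invariance statement: for every $\epsilon>0$, every $E\in\Pfin(G)$ and every $F\in\Pfin(\mathrm{RUCB}(G))$ there is some $\nu\in\mathrm{M}(\mathrm{RUCB}(G))$ with $\sup_{g\in E}\sup_{f\in F}\lvert\nu(f)-\nu(f\circ\lambda_{g})\rvert\le\epsilon$. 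So I would fix $\epsilon,E,F$ and apply hypothesis~\eqref{new} with tolerance $\tfrac{\epsilon}{2}$ to extract a family $(h_{g})_{g\in E}\in H^{E}$ with the associated selection property.

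The crucial point --- and the one I expect to be the main obstacle --- is the order of quantifiers in~\eqref{new}: the mean $\mu\in\mathrm{M}(\mathrm{RUCB}(G))$ is produced only \emph{after} the finite set $S\in\Pfin(H)$ is fixed, whereas the natural candidate for $\nu$ averages a mean coming from amenability of $H$ against the functions $R_{\mu}f$, so one would like to choose that averaging mean with $S=\spt$ already adapted to $\mu$. To break this circularity I would prove a uniform equicontinuity lemma: for each $f\in\mathrm{RUCB}(G)$ the \emph{entire} family $\{R_{\mu}f\mid\mu\in\mathrm{M}(\mathrm{RUCB}(G))\}$ lies in $\mathrm{RUEB}(G)$. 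This follows because $\lVert R_{\mu}f\rVert_{\infty}\le\lVert f\rVert_{\infty}$ and, for $(x,y)\in U_{\Rsh}$, one has $(xz)(yz)^{-1}=xy^{-1}\in U$ for all $z\in G$, so that $\lvert R_{\mu}f(x)-R_{\mu}f(y)\rvert\le\lVert f\circ\lambda_{x}-f\circ\lambda_{y}\rVert_{\infty}\le\epsilon$ uniformly in $\mu$ as soon as $U$ is a modulus of right-uniform continuity for $f$. Restricting to $H$ via Remark~\ref{remark:induced.uniformities}(1), the set $\mathcal{K}\defeq\{(R_{\mu}f)\vert_{H}\mid f\in F,\,\mu\in\mathrm{M}(\mathrm{RUCB}(G))\}$ is a single element of $\mathrm{RUEB}(H)$ that does \emph{not} depend on the eventual choice of $\mu$ or $S$.

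With this in hand the circularity dissolves. Amenability of $H$ and its F\o lner-type characterisation~\cite[Theorem~3.2]{SchneiderThomFolner}, applied to the RUEB set $\mathcal{K}$ and the finite set $\{h_{g}\mid g\in E\}\subseteq H$ with tolerance $\tfrac{\epsilon}{2}$, yield $\omega\in\Delta(H)$ with $\lvert\omega(k)-\omega(k\circ\lambda_{h_{g}})\rvert\le\tfrac{\epsilon}{2}$ for all $g\in E$ and $k\in\mathcal{K}$. Now I would set $S\defeq\spt(\omega)$ and feed it into~\eqref{new} to obtain $\mu\in\mathrm{M}(\mathrm{RUCB}(G))$ with $\lvert(R_{\mu}f)(gs)-(R_{\mu}f)(h_{g}s)\rvert<\tfrac{\epsilon}{2}$ for all $g\in E$, $s\in S$, $f\in F$, and define $\nu\defeq\sum_{s\in S}\omega(s)\,s\mu\in\mathrm{M}(\mathrm{RUCB}(G))$, where $(s\mu)(f)\defeq\mu(f\circ\lambda_{s})$; here $R_{\mu}f\in\mathrm{RUCB}(G)$ by~\cite[Lemma~3.7(2)]{SchneiderThomRandomWalks}. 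By construction $\nu(f)=\omega\bigl((R_{\mu}f)\vert_{H}\bigr)$, while $\nu(f\circ\lambda_{g})=\sum_{s\in S}\omega(s)(R_{\mu}f)(gs)$.

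The final estimate is then a two-step triangle inequality, valid for every $g\in E$ and $f\in F$. First, the selection property of $\mu$ gives $\bigl\lvert\nu(f\circ\lambda_{g})-\sum_{s\in S}\omega(s)(R_{\mu}f)(h_{g}s)\bigr\rvert\le\tfrac{\epsilon}{2}$, since $\omega$ is a probability vector. Second, $\sum_{s\in S}\omega(s)(R_{\mu}f)(h_{g}s)=\omega\bigl((R_{\mu}f)\vert_{H}\circ\lambda_{h_{g}}\bigr)$, which lies within $\tfrac{\epsilon}{2}$ of $\omega\bigl((R_{\mu}f)\vert_{H}\bigr)=\nu(f)$ by almost-invariance of $\omega$ on $\mathcal{K}$. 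Adding the two bounds yields $\lvert\nu(f)-\nu(f\circ\lambda_{g})\rvert\le\epsilon$, exactly the approximate invariance sought; a weak-$\ast$ accumulation point of such $\nu$ is then a left-invariant mean on $\mathrm{RUCB}(G)$, so $G$ is amenable.
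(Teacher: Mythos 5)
Your proposal is correct and takes essentially the same approach as the paper: the paper resolves the same quantifier circularity by observing that $\{(R_{\mu}f)\vert_{H} \mid f \in F,\, \mu \in \mathrm{M}(\mathrm{RUCB}(G))\}$ is a single element of $\mathrm{RUEB}(H)$ independent of $\mu$ (citing~\cite[Lemma~3.7(2)]{SchneiderThomRandomWalks} where you verify the equicontinuity by hand via $(xz)(yz)^{-1}=xy^{-1}$), then obtains $\xi \in \Delta(H)$ from~\cite[Theorem~3.2]{SchneiderThomFolner}, sets $S \defeq \spt(\xi)$, feeds $S$ into~\eqref{new}, and forms the same averaged mean $\nu(f) = \sum_{s \in S}\xi(s)\,\mu(f \circ \lambda_{s})$ with the same two-step triangle inequality.
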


\begin{proof} The proof is a straightforward modification of the proof of Proposition~\ref{proposition:weak.proximal.simulation.implies.amenability}, which we record only for the sake of convenience. Again, thanks to weak-$\ast$ compactness of $\mathrm{M}(\mathrm{RUCB}(G))$ (see, e.g., \cite[2.1, Theorem~1.8(i), p.~68]{AnalysisOnSemigroups}), it suffices to prove that \begin{align*}
	\forall \epsilon > 0 \ \forall E \in \Pfin (G) \ \forall F \in \Pfin &(\mathrm{RUCB}(G)) \ \exists \nu \in \mathrm{M}(\mathrm{RUCB}(G)) \colon \\
		& \sup\nolimits_{g \in E} \sup\nolimits_{f \in F} \vert \nu(f) - \nu (f \circ \lambda_{g}) \vert \, \leq \, \epsilon \, .
\end{align*} For this purpose, let $\epsilon > 0$, $E \in \Pfin (G)$ and $F \in \Pfin (\mathrm{RUCB}(G))$. By our hypothesis~\eqref{new}, there exists a family $(h_{g})_{g \in E} \in H^{E}$ such that \begin{equation}\tag{1}\label{new1} \begin{split}
	\forall S \in \Pfin (H) \ \exists \mu \in \mathrm{M}(\mathrm{RUCB}(G)&) \ \forall g \in E \ \forall s \in S \ \forall f \in F \colon \\
	& \vert (R_{\mu}f)(gs) - (R_{\mu}f)(h_{g}s) \vert \leq \tfrac{\epsilon}{2} \, .
\end{split} \end{equation} By~\cite[Lemma~3.7(2)]{SchneiderThomRandomWalks}, our Remark~\ref{remark:induced.uniformities}(1) and $\mathrm{RUEB}(H)$ being closed under finite unions, \begin{displaymath}
	\{ (R_{\mu}f)\vert_{H} \mid f \in F, \, \mu \in \mathrm{M}(\mathrm{RUCB}(G)) \} \, \in \, \mathrm{RUEB}(H) \, .
\end{displaymath} Hence, due to~\cite[Theorem~3.2]{SchneiderThomFolner} and amenability of $H$, there exists $\xi \in \Delta (H)$ such that \begin{equation}\tag{2}\label{new2}
	\forall g \in E \ \forall f \in F \ \forall \mu \in \mathrm{M}(\mathrm{RUCB}(G)) \colon \quad \left\lvert \xi ((R_{\mu}f)\vert_{H}) - \xi \!\left((R_{\mu}f)\vert_{H} \circ \lambda_{h_{g}}\right) \right\rvert \, \leq \, \tfrac{\epsilon}{2} \, .
\end{equation} Since $S \defeq \spt (\xi)$ is finite, \eqref{new1} asserts the existence of some $\mu \in \mathrm{M}(\mathrm{RUCB}(G))$ such that \begin{equation}\tag{3}\label{new3}
	\forall g \in E \ \forall s \in S \ \forall f \in F \colon \quad \vert (R_{\mu}f)(gs) - (R_{\mu}f)(h_{g}s) \vert \leq \tfrac{\epsilon}{2} \, .
\end{equation} Now, it is straightforward to verify that \begin{displaymath}
	\nu \colon \, \mathrm{RUCB}(G) \, \longrightarrow \, \mathbb{R} , \quad f \, \longmapsto \,  \sum\nolimits_{s \in S} \xi (s) \mu (f \circ \lambda_{s})
\end{displaymath} constitutes a mean on $\mathrm{RUCB}(G)$ satisfying \begin{equation}\tag{4}\label{new4}
	\forall h \in H \ \forall f \in \mathrm{RUCB}(G) \colon \quad \nu(f \circ {\lambda_{h}}) \, = \, \xi ({(R_{\mu}f)\vert_{H}} \circ {\lambda_{h}}) .
\end{equation} By our choice of $\mu$, if $g \in E$, then \begin{align*}
	\left\lvert \nu\! \left(f \circ \lambda_{g}\right) - \nu \!\left(f \circ \lambda_{h_{g}}\right) \right\rvert \, &= \, \left\lvert \sum\nolimits_{s \in S} \xi(s) \mu (f \circ \lambda_{g} \circ \lambda_{s}) - \sum\nolimits_{s \in S} \xi(s) \mu \!\left(f \circ \lambda_{h_{g}} \circ \lambda_{s}\right) \right\rvert \\
		& \hspace{-35mm} \leq \, \sum\nolimits_{s \in S} \xi (s) \! \left\lvert \mu (f \circ \lambda_{gs}) - \mu \!\left(f \circ \lambda_{h_{g}s}\right) \right\rvert \, = \, \sum\nolimits_{s \in S} \xi (s) \! \left\lvert (R_{\mu}f)(gs) - (R_{\mu}f)(h_{g}s) \right\rvert \, \stackrel{\eqref{new3}}{\leq} \, \tfrac{\epsilon}{2}
\end{align*} for every $f \in F$. Consequently, we arrive at \begin{align*}
	\vert \nu (f) - \nu (f \circ \lambda_{g}) \vert \, &\leq \, \left\lvert \nu (f) - \nu \!\left(f \circ \lambda_{h_{g}}\right) \right\rvert + \tfrac{\epsilon}{2} \\
		& \stackrel{\eqref{new4}}{=} \, \left\lvert \xi ( {(R_{\mu}f)\vert_{H}}) - \xi \! \left({(R_{\mu}f)\vert_{H}} \circ \lambda_{h_{g}}\right) \right\rvert + \tfrac{\epsilon}{2} \, \stackrel{\eqref{new2}}{\leq} \, \tfrac{\epsilon}{2} + \tfrac{\epsilon}{2} \, = \, \epsilon
\end{align*} for all $g \in E$ and $f \in F$, as desired. \end{proof}

\begin{remark} Let $G$ be a topological group. \begin{itemize}
	\item[$(1)$] The condition~\eqref{new} in Proposition~\ref{proposition:amenable.counterpart} is trivially satisfied for $H=G$.
	\item[$(2)$] If $G$ is amenable, then~\eqref{new} holds for every subgroup $H \leq G$.
	\item[$(3)$] If~\eqref{new} is true for $H = \{ e \} \leq G$, then $G$ is amenable (not only by Proposition~\ref{proposition:amenable.counterpart}, but by the very statement of~\eqref{new} and weak-$\ast$ compactness of $\mathrm{M}(\mathrm{RUCB}(G))$ already).
\end{itemize} \end{remark}

We return to the study of skew-amenability. Evidently, the usefulness of Proposition~\ref{proposition:weak.proximal.simulation.implies.amenability} and Corollary~\ref{corollary:weak.proximal.simulation.implies.amenability} in concrete applications will depend on the availability of methods for proving weak proximal simulation. In Lemma~\ref{lemma:approximation.implies.proximal.simulation} we provide a sufficient criterion for proximal simulation in the context of topological transformation groups. As a preparation, let us agree on some additional terminology.

\begin{definition}\label{definition:approximation} Let $X$ be a metric space and let $G \leq \mathrm{Iso}(X)$. For any $A \subseteq X$, we let \begin{displaymath}
	B_{X}(A,\epsilon) \, \defeq \, \{ x \in X \mid \exists a \in A \colon \, d_{X}(x,a) < \epsilon \} \, .
\end{displaymath} A subset $A \subseteq X$ will be called \emph{$G$-attractive} if, for every $\epsilon >0$ and every finite subset $F \subseteq X$, there exists some element $g \in G$ such that $g(F) \subseteq B_{X}(A,\epsilon)$. Let $\mathscr{A}$ be a filter on the set $X$. Then $\mathscr{A}$ will be called \begin{enumerate}
	\item[---\,] \emph{$G$-attractive} if every member of $\mathscr{A}$ is $G$-attractive,
	\item[---\,] \emph{approximately $G$-invariant} if $B_{X}(g(A),\epsilon) \in \mathscr{A}$ for all $A \in \mathscr{A}$ and $\epsilon > 0$.
\end{enumerate} Furthermore, a subgroup $H \leq G$ will be said to \emph{$\mathscr{A}$-approximate} $G$ if \begin{displaymath}
	\forall g \in G \ \exists h \in H \ \forall \epsilon > 0 \ \exists A \in \mathscr{A} \colon \quad \sup\nolimits_{x \in A} d_{X}(gx,hx) \, < \, \epsilon \, .
\end{displaymath} \end{definition}

Let us unravel some of the above in the context of topological permutation groups.

\begin{remark}\label{remark:permutation.groups} Let $X$ be a set, and endow $X$ with the (discrete) metric $d$ defined by \begin{displaymath}
	d(x,y) \, \defeq \, \begin{cases}
		\, 0 & \text{if } x=y , \\
		\, 1 & \text{otherwise}
	 \end{cases}
\end{displaymath} for all $x,y \in X$. Consider any subgroup $G \leq \mathrm{Sym}(X) = \mathrm{Iso}(X,d)$. Then a subset $A \subseteq X$ is $G$-attractive if, for every finite subset $F \subseteq X$, there exists some $g \in G$ such that $g(F) \subseteq A$. Let $\mathscr{A}$ be a filter on $X$. Then $\mathscr{A}$ is approximately $G$-invariant if and only if $\mathscr{A}$ is \emph{$G$-invariant}, in the sense that $g(A) \in \mathscr{A}$ for all $A \in \mathscr{A}$. Finally, a subgroup $H \leq G$ $\mathscr{A}$-approximates $G$ if and only if, for every $g \in G$, there exist $h \in H$ and $A \in \mathscr{A}$ such that $g\vert_{A} = h\vert_{A}$. \end{remark}

The following lemma describes the connection between Definition~\ref{definition:proximal.simulation} and Definition~\ref{definition:approximation}.

\begin{lem}\label{lemma:approximation.implies.proximal.simulation} Let $X$ be a metric space, let $G$ be a topological subgroup of $\mathrm{Iso}(X)$, and let $\mathscr{A}$ be a $G$-attractive and approximately $G$-invariant filter on $X$. If $G$ is $\mathscr{A}$-approximated by a subgroup $H \leq G$, then $G$ is proximally simulated by $H$. \end{lem}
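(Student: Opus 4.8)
The plan is to take, for each $g \in E$, the element $h_{g} \in H$ supplied by the $\mathscr{A}$-approximation hypothesis, and to show that the resulting family $(h_{g})_{g \in E} \in H^{E}$ witnesses proximal simulation. First I would translate the target inclusion into metric language. Since $G \leq \mathrm{Iso}(X)$ and the sets $U_{G}(F,\epsilon)$ form a neighbourhood basis of the identity, it suffices to verify the defining condition for $U = U_{G}(F,\epsilon)$ (any $U \in \mathscr{U}(G)$ contains such a basic set). A one-line isometry computation, writing $gst \in h_{g}st\,U_{G}(F,\epsilon)$ as $(h_{g}st)^{-1}(gst) \in U_{G}(F,\epsilon)$ and applying the isometry $h_{g}st$, shows this is equivalent to $d_{X}\!\bigl(g(s(t(x))),h_{g}(s(t(x)))\bigr) < \epsilon$ for all $x \in F$. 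Thus the whole statement reduces to: given $F \in \Pfin(X)$, $\epsilon > 0$ and $S \in \Pfin(G)$, produce $t \in G$ such that every point $s(t(x))$ (for $s \in S$, $x \in F$) lies in a region on which $g$ and $h_{g}$ agree to within $\epsilon$, simultaneously for all $g \in E$.

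Second, I would record an elementary slack estimate: if $g,h_{g}$ are isometries with $\sup_{y \in A} d_{X}(gy,h_{g}y) < \epsilon_{0}$, then $d_{X}(gp,h_{g}p) < \epsilon_{0} + 2r$ for every $p \in B_{X}(A,r)$. Choosing $\epsilon_{0} = \tfrac{\epsilon}{2}$ and $r = \tfrac{\epsilon}{8}$, and using $\mathscr{A}$-approximation to pick, for each $g \in E$, a member $A_{g} \in \mathscr{A}$ with $\sup_{y \in A_{g}} d_{X}(gy,h_{g}y) < \epsilon_{0}$, the intersection $A_{0} \defeq \bigcap_{g \in E} A_{g} \in \mathscr{A}$ (finite intersection in a filter) then guarantees $d_{X}(gp,h_{g}p) < \epsilon$ for all $g \in E$ and all $p \in B_{X}(A_{0},r)$. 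It therefore remains only to find $t \in G$ with $s(t(x)) \in B_{X}(A_{0},r)$ for all $s \in S$ and $x \in F$.

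Third, and decisively, I would absorb the finite set $S$ into a single member of the filter by approximate $G$-invariance, and then invoke $G$-attractiveness to supply the translation. Splitting $r = \delta_{1} + \delta_{2}$, for each $s \in S$ the set $B_{X}(s^{-1}(A_{0}),\delta_{1})$ belongs to $\mathscr{A}$ by approximate $G$-invariance applied with $s^{-1} \in G$, whence $A^{\ast} \defeq \bigcap_{s \in S} B_{X}(s^{-1}(A_{0}),\delta_{1}) \in \mathscr{A}$. Every member of $\mathscr{A}$ is $G$-attractive, so there is $t \in G$ with $t(F) \subseteq B_{X}(A^{\ast},\delta_{2})$. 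A short estimate finishes the argument: for $x \in F$ choose $w \in A^{\ast}$ with $d_{X}(t(x),w) < \delta_{2}$; for each $s \in S$, membership $w \in B_{X}(s^{-1}(A_{0}),\delta_{1})$ gives $a_{s} \in s^{-1}(A_{0})$ with $d_{X}(w,a_{s}) < \delta_{1}$, so $d_{X}(t(x),a_{s}) < \delta_{1} + \delta_{2} = r$, and applying the isometry $s$ yields $d_{X}(s(t(x)),s(a_{s})) < r$ with $s(a_{s}) \in A_{0}$; hence $s(t(x)) \in B_{X}(A_{0},r)$, exactly as required. (The degenerate case $S = \emptyset$ is covered by $X \in \mathscr{A}$.)

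I expect the only genuinely delicate point to be the correct allocation of the error budget across the three independent sources of slackness: the defect $\epsilon_{0}$ in the approximation $g \approx h_{g}$, the fattening $\delta_{1}$ from approximate $G$-invariance (which is what converts the unattainable requirement $s(t(x)) \in A_{0}$ into the attainable $t(x) \in B_{X}(s^{-1}(A_{0}),\delta_{1})$), and the fattening $\delta_{2}$ inherent in $G$-attractiveness (which yields only $t(F) \subseteq B_{X}(A^{\ast},\delta_{2})$ rather than exact containment). The conceptual heart is the observation that approximate $G$-invariance is precisely the hypothesis allowing the finite set $S$ to be swallowed into a single filter member \emph{before} attractiveness produces $t$; once this is seen, the remaining estimates are routine applications of the triangle inequality and the fact that the elements of $S$ are isometries.
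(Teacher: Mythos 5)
Your proposal is correct and follows essentially the same route as the paper's proof: fix the family $(h_{g})_{g \in E}$ from the $\mathscr{A}$-approximation hypothesis (crucially, before $U$ and $S$ are quantified), reduce to basic neighbourhoods $U_{G}(F,\epsilon)$, use approximate $G$-invariance to intersect the pulled-back fattened sets $B_{X}(s^{-1}(A_{g}),\cdot)$ into a single filter member, apply $G$-attractiveness of that member to produce $t$, and finish with the triangle inequality using that the group elements are isometries. The only differences are bookkeeping: the paper forms $\bigcap_{s \in S}\bigcap_{g \in E} B_{X}\!\left(s^{-1}(A_{g}),\tfrac{\epsilon}{6}\right)$ in one step with an $\tfrac{\epsilon}{3}+\tfrac{\epsilon}{3}+\tfrac{\epsilon}{3}$ budget, whereas you intersect over $E$ first and use a $\tfrac{\epsilon}{2}+2\cdot\tfrac{\epsilon}{8}$ budget, which is immaterial.
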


\begin{proof} Suppose that $G$ is $\mathscr{A}$-approximated by some $H \leq G$. To show that $H$ proximally simulates $G$, consider a finite subset $E \subseteq G$. Since $G$ is $\mathscr{A}$-approximated by $H$, there exists a family $(h_{g})_{g \in E} \in H^{E}$ such that \begin{equation}\tag{$\ast$}\label{hypothesis}
	\forall \epsilon > 0 \ \exists (A_{g})_{g \in E} \in \mathscr{A}^{E} \ \forall g \in E \colon \quad \sup\nolimits_{x \in A_{g}} d_{X}(gx,h_{g}x) \, < \, \epsilon \, .
\end{equation} We now claim that \begin{equation}\tag{$\ast\ast$}\label{claim}
	\forall U \in \mathscr{U}(G) \ \forall S \in \Pfin (G) \ \exists t \in G \ \forall g \in E \ \forall s \in S \colon \quad gst \in h_{g}stU \, .
\end{equation} In order to prove~\eqref{claim}, let $U \in \mathscr{U}(G)$ and $S \in \Pfin(G)$. By definition of the topology of pointwise convergence, there exist some finite subset $F \subseteq X$ and some $\epsilon > 0$ such that $U_{G}(F,\epsilon) \subseteq U$. Using assertion~\eqref{hypothesis}, we find $(A_{g})_{g \in E} \in \mathscr{A}^{E}$ such that $\sup\nolimits_{x \in A_{g}} d_{X}(gx,h_{g}x) < \tfrac{\epsilon}{3}$ for each $g \in E$. Furthermore, thanks to $\mathscr{A}$ being approximately $G$-invariant, \begin{displaymath}
	A \, \defeq \, \bigcap\nolimits_{s \in S} \bigcap\nolimits_{g \in E} B_{X}\!\left(s^{-1}(A_{g}),\tfrac{\epsilon}{6}\right) \! \, \in \, \mathscr{A} \, .
\end{displaymath} In particular, $A$ is $G$-attractive, and so there exists some $t \in G$ such that $t(F) \subseteq B_{X}\!\left(A,\tfrac{\epsilon}{6}\right)$. Hence, if $g \in E$, $s \in S$ and $x \in F$, then $tx \in B_{X}\!\left( s^{-1}(A_{g}),\tfrac{\epsilon}{3} \right)$, i.e., $stx \in B_{X}\!\left(A_{g},\tfrac{\epsilon}{3}\right)$, so that we find $y \in A_{g}$ with $d_{X}(stx,y) < \tfrac{\epsilon}{3}$, wherefore \begin{displaymath}
	d_{X}\!\left(gstx,h_{g}st x)\right) \, \leq \, d_{X}(gstx, gy) + d_{X}(gy, h_{g}y) + d_{X}(h_{g}y, h_{g}stx) \, < \, \epsilon \, .
\end{displaymath} So, for all $g \in E$ and $s \in S$, we have $\left( h_{g}st \right)^{-1}\!gst \in U_{G}(F,\epsilon) \subseteq U$, and thus $gst \in h_{g}stU$. This proves~\eqref{claim}, as desired. \end{proof}

We arrive at the following skew-amenability criterion for topological isometry groups.

\begin{prop}\label{proposition:approximation.implies.skew.amenability} Let $X$ be a metric space, let $G$ be a topological subgroup of $\mathrm{Iso}(X)$, and let $\mathscr{A}$ be a $G$-attractive and approximately $G$-invariant filter on $X$. Let $H$ be an amenable topological group and let $\psi \colon H \to G$ be a balanced homomorphism. If $G$ is $\mathscr{A}$-approximated by $\psi(H)$, then $G$ is skew-amenable. \end{prop}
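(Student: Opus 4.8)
The plan is to chain together the two principal results established above, applied to the subgroup $\psi(H) \leq G$. First I would observe that, since $\psi$ is a group homomorphism, its image $\psi(H)$ is a genuine subgroup of $G$, and that the hypothesis ``$G$ is $\mathscr{A}$-approximated by $\psi(H)$'' is exactly the hypothesis of Lemma~\ref{lemma:approximation.implies.proximal.simulation} with the approximating subgroup taken to be $\psi(H)$. Since by assumption $\mathscr{A}$ is a $G$-attractive and approximately $G$-invariant filter on the metric space $X$, and $G \leq \mathrm{Iso}(X)$, Lemma~\ref{lemma:approximation.implies.proximal.simulation} applies verbatim and yields that $G$ is proximally simulated by $\psi(H)$.

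Next I would invoke the implication, already noted immediately after Definition~\ref{definition:proximal.simulation}, that proximal simulation entails weak proximal simulation. Consequently $\psi(H)$ weakly proximally simulates $G$.

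Finally, the three remaining hypotheses of Proposition~\ref{proposition:weak.proximal.simulation.implies.amenability} are now all in force: the map $\psi \colon H \to G$ is a balanced homomorphism by assumption, the topological group $H$ is amenable by assumption, and $\psi(H)$ weakly proximally simulates $G$ by the previous two steps. Applying Proposition~\ref{proposition:weak.proximal.simulation.implies.amenability} directly gives the skew-amenability of $G$, which completes the argument.

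As for the main obstacle: there is essentially none at this stage, since all the real work has already been carried out upstream---the metric-and-filter argument producing, for given $U$ and $S$, a right-translate $t$ with $t(F) \subseteq B_{X}\!\left(A,\tfrac{\epsilon}{6}\right)$ sits inside Lemma~\ref{lemma:approximation.implies.proximal.simulation}, while the averaging argument combining amenability of $H$ with balancedness of $\psi$ (via Lemma~\ref{lemma:balanced.homomorphism}) sits inside Proposition~\ref{proposition:weak.proximal.simulation.implies.amenability}. The only point requiring genuine care is bookkeeping of symbols: the ``$H$'' appearing in Lemma~\ref{lemma:approximation.implies.proximal.simulation} plays the role of the image subgroup $\psi(H) \leq G$ here, \emph{not} that of the abstract amenable group $H$, so one must be explicit that proximal simulation is established for $\psi(H)$ before feeding this into Proposition~\ref{proposition:weak.proximal.simulation.implies.amenability}.
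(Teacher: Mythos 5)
Your proposal is correct and follows exactly the paper's own proof, which cites Lemma~\ref{lemma:approximation.implies.proximal.simulation} and Proposition~\ref{proposition:weak.proximal.simulation.implies.amenability} as an immediate consequence; you simply spell out the intermediate steps (proximal simulation of $G$ by $\psi(H)$, hence weak proximal simulation, hence skew-amenability) that the paper leaves implicit. Your care in distinguishing the abstract amenable group $H$ from its image $\psi(H) \leq G$ when instantiating the lemma is exactly the right bookkeeping.
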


\begin{proof} This is an immediate consequence of Lemma~\ref{lemma:approximation.implies.proximal.simulation} and Proposition~\ref{proposition:weak.proximal.simulation.implies.amenability}. \end{proof}

\begin{cor}\label{corollary:approximation.implies.skew.amenability} Let $X$ be a metric space, let $G$ be a topological subgroup of $\mathrm{Iso}(X)$, and let $\mathscr{A}$ be a $G$-attractive and approximately $G$-invariant filter on $X$. If $G$ is $\mathscr{A}$-approximated by some discretely amenable subgroup, then $G$ is skew-amenable. \end{cor}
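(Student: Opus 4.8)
The plan is to obtain this directly as a specialization of Proposition~\ref{proposition:approximation.implies.skew.amenability}. Let $K \leq G$ be the discretely amenable subgroup that $\mathscr{A}$-approximates $G$. The idea is to feed $K$ into Proposition~\ref{proposition:approximation.implies.skew.amenability} as the amenable group $H$, together with the obvious inclusion playing the role of the balanced homomorphism $\psi$.

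First I would equip $K$ with the discrete topology, producing a topological group $H$; by hypothesis $K$ is amenable as a discrete group, so $H$ is an amenable topological group. Next I would let $\psi \colon H \to G$ be the inclusion homomorphism, so that $\psi(H) = K$. By hypothesis $K = \psi(H)$ $\mathscr{A}$-approximates $G$, which is precisely the approximation assumption required by Proposition~\ref{proposition:approximation.implies.skew.amenability}.

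The only point that needs checking is that $\psi$ is balanced. This is immediate from the discreteness of $H$: since $\mathscr{U}(H)$ contains every subset of $H$ that contains the identity, and since for each $U \in \mathscr{U}(G)$ the set $\bigcap_{g \in G} \psi^{-1}(g^{-1}Ug)$ contains $e$ (as $e \in g^{-1}Ug$ for all $g \in G$), this intersection automatically lies in $\mathscr{U}(H)$. This is exactly the first of the three sufficient conditions for balancedness recorded after Definition~\ref{definition:balanced.homomorphism}. With all hypotheses of Proposition~\ref{proposition:approximation.implies.skew.amenability} thereby verified, skew-amenability of $G$ follows at once.

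I do not anticipate any genuine obstacle: the content of the corollary lies entirely in Proposition~\ref{proposition:approximation.implies.skew.amenability} (and, through it, in Lemma~\ref{lemma:approximation.implies.proximal.simulation} and Proposition~\ref{proposition:weak.proximal.simulation.implies.amenability}), the corollary being simply the instance where the amenable group is taken to be the approximating subgroup itself, endowed with its discrete topology. The one thing to be careful about is recording explicitly that the discrete topology renders the inclusion balanced, so that Proposition~\ref{proposition:approximation.implies.skew.amenability} genuinely applies.
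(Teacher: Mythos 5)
Your proposal is correct and matches the paper's intent exactly: the corollary is the specialization of Proposition~\ref{proposition:approximation.implies.skew.amenability} in which the approximating subgroup, endowed with the discrete topology, serves as the amenable group $H$ and the inclusion serves as $\psi$, whose balancedness is immediate from discreteness (the first of the sufficient conditions listed after Definition~\ref{definition:balanced.homomorphism}). No gaps; your explicit verification that the inclusion is balanced is the only point requiring any care, and you handled it correctly.
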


We conclude this section with discrete versions of Lemma~\ref{lemma:approximation.implies.proximal.simulation} and Proposition~\ref{proposition:approximation.implies.skew.amenability} in the context of permutation groups, viewed as isometry groups (cf.~Remark~\ref{remark:permutation.groups}).

\begin{cor}\label{corollary:permutation.groups} Let $X$ be a set, let $G$ be a topological subgroup of $\mathrm{Sym}(X)$, and let $\mathscr{A}$ be a $G$-attractive and $G$-invariant filter on $X$. Then the following hold: \begin{itemize}
	\item[$(1)$] If $G$ is $\mathscr{A}$-approximated by some $H \leq G$, then $G$ is proximally simulated by $H$.
	\item[$(2)$] If there exists an amenable topological group $H$ along with a balanced homomorphism $\psi \colon H \to G$ such that $G$ is $\mathscr{A}$-approximated by $\psi(H)$, then $G$ is skew-amenable.
	\item[$(3)$] If $G$ is $\mathscr{A}$-approximated by a discretely amenable subgroup, then $G$ is skew-amenable.
\end{itemize} \end{cor}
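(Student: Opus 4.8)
The plan is to deduce all three assertions directly from their metric-space counterparts, Lemma~\ref{lemma:approximation.implies.proximal.simulation}, Proposition~\ref{proposition:approximation.implies.skew.amenability}, and Corollary~\ref{corollary:approximation.implies.skew.amenability}, by equipping $X$ with the discrete metric $d$ of Remark~\ref{remark:permutation.groups}, for which $\mathrm{Sym}(X) = \mathrm{Iso}(X,d)$ and $G$ becomes a topological subgroup of $\mathrm{Iso}(X,d)$. The entire content of the proof is then the verification that the discrete hypotheses recorded in the corollary coincide with the metric hypotheses of those three results, and every one of these identifications is already spelled out in Remark~\ref{remark:permutation.groups}.

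First I would check that $\mathscr{A}$ is approximately $G$-invariant with respect to $d$ in the sense of Definition~\ref{definition:approximation}. For $A \in \mathscr{A}$, $g \in G$ and $0 < \epsilon \leq 1$ one has $B_{X}(g(A),\epsilon) = g(A) \in \mathscr{A}$ by $G$-invariance, while for $\epsilon > 1$ one has $B_{X}(g(A),\epsilon) = X \in \mathscr{A}$; hence $B_{X}(g(A),\epsilon) \in \mathscr{A}$ in all cases. Likewise, for a set $A$ the metric notion of $G$-attractiveness reduces, upon taking $\epsilon \leq 1$ so that $B_{X}(A,\epsilon) = A$, to the discrete requirement that every finite $F \subseteq X$ satisfy $g(F) \subseteq A$ for some $g \in G$; larger $\epsilon$ impose no further constraint. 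Thus $\mathscr{A}$ is a $G$-attractive and approximately $G$-invariant filter on the metric space $(X,d)$, as needed to invoke the three results.

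For part $(1)$, it remains to observe that $\mathscr{A}$-approximation of $G$ by $H$ in the discrete sense matches the metric one: fixing $g \in G$, the existence of $h \in H$ and $A \in \mathscr{A}$ with $g\vert_{A} = h\vert_{A}$ is exactly the clause $\sup_{x \in A} d(gx,hx) < \epsilon$ for $\epsilon \leq 1$, and this single $A$ then serves for every $\epsilon > 0$. Applying Lemma~\ref{lemma:approximation.implies.proximal.simulation} yields that $G$ is proximally simulated by $H$. For parts $(2)$ and $(3)$ the same dictionary shows that the hypotheses of Proposition~\ref{proposition:approximation.implies.skew.amenability} and Corollary~\ref{corollary:approximation.implies.skew.amenability} hold in $(X,d)$, so those results give skew-amenability of $G$ at once; part $(3)$ may alternatively be read off from $(2)$ by letting $H$ carry the discrete topology and $\psi$ be the inclusion, which is balanced. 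Since each step is a verbatim appeal to an earlier result after these routine identifications, there is no real obstacle; the only care required is the elementary remark that all the relevant discrete and metric formulations agree once one restricts attention to $\epsilon \leq 1$.
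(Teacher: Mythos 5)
Your proposal is correct and follows exactly the route the paper intends: the corollary is stated there as the discrete specialization of Lemma~\ref{lemma:approximation.implies.proximal.simulation}, Proposition~\ref{proposition:approximation.implies.skew.amenability} and Corollary~\ref{corollary:approximation.implies.skew.amenability} via the discrete-metric dictionary of Remark~\ref{remark:permutation.groups}, and your argument simply makes those identifications explicit (including the observation that a homomorphism from a discrete group is automatically balanced, which gives part $(3)$ from part $(2)$). No gaps; the verification for $\epsilon \leq 1$ versus $\epsilon > 1$ is exactly the routine check the paper leaves to the reader.
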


\section{Example: Monod's group}\label{section:monod}

This section is devoted to a detailed discussion of skew-amenability and related properties of Monod's groups of piecewise projective homeomorphisms of the real line~\cite{monod}. We recall Monod's construction~\cite{monod} briefly. Let $A$ be a unital subring of $\mathbb{R}$ and consider the natural faithful action of the associated projective special linear group $\mathrm{PSL}(2,A) = \mathrm{SL}(2,A)/\{\pm E_{2}\}$ by homeomorphisms on the real projective line $\mathbf{P}^{1} \defeq \mathbf{P}^{1}(\mathbb{R})$, the latter being equipped with the \emph{projective topology}, i.e., the quotient topology inherited from the unit sphere $S^{1} \subseteq \mathbb{R}^{2}$. Henceforth, we will identify $\mathrm{PSL}(2,A)$ with the corresponding subgroup of $\mathrm{Homeo}\!\left( \mathbf{P}^{1} \right)$. Let us denote by $H(A)$ the group of such homeomorphisms of $\mathbf{P}^{1}$ which fix $\infty = [1:0] \in \mathbf{P}^{1}$ and are piecewise in $\mathrm{PSL}(2,A)$, with finitely many pieces, each of which being an interval in the cyclic order of $\mathbf{P}^{1}$ and having endpoints contained in the set \begin{displaymath}
	\left. P_{A} \, \defeq \, \! \left\{ z \in \mathbf{P}^{1} \, \right\vert \exists h \in \mathrm{PSL}(2,A) \text{ hyperbolic: }  hz = z \right\} .
\end{displaymath} By the work of Monod~\cite[Theorem~1]{monod}, if $A \ne \mathbb{Z}$, then the discrete group $H(A)$ is not amenable. In the present note, we study amenability-like properties of $H(A)$ viewed as a topological group. For this purpose, we observe that there are (at least) two natural non-discrete group topologies on $H(A)$: we denote by $H(A)_{p}$ the topological group $H(A)$ endowed with the topology of pointwise convergence arising from the discrete topology on~$\mathbf{P}^{1}$, and we denote by $H(A)_{u}$ the topological group $H(A)$ endowed with the topology of uniform convergence arising from the projective topology on $\mathbf{P}^{1}$. Since $\mathbf{P}^{1}$ is homeomorphic to $S^{1}$, the topology
of uniform convergence on $\mathrm{Homeo}\!\left( \mathbf{P}^{1} \right)$ coincides\footnote{An interesting discussion of properties of a topological space $X$ ensuring that the associated compact-open topology and the topology of pointwise convergence agree on $\mathrm{Homeo}(X)$ is to be found in~\cite[Remark~3, footnote~2, pp.~3--4]{gheysens}.} with the topology of pointwise convergence associated with the projective topology on $\mathbf{P}^{1}$, which is coarser than the topology of pointwise convergence corresponding to the discrete
topology on $\mathbf{P}^{1}$, of course. In particular, the topology of $H(A)_{u}$ is coarser than the one carried by $H(A)_{p}$.

\begin{remark}\label{remark:monod} Let $A$ be a unital subring of $\mathbb{R}$ and denote by $A^{\ast}$ the group of its multiplicative units. Then the subgroup \begin{displaymath}
	\left. U(A) \, \defeq \, \left\{ \begin{pmatrix} a & b \\ 0 & a^{-1} \end{pmatrix} \, \right\vert a \in A^{\ast}, \, b \in A \right\} \, \leq \, \mathrm{SL}(2,A)
\end{displaymath} is amenable, being the semi-direct product of the two abelian subgroups \begin{displaymath}
	\left. \left\{ \begin{pmatrix} a & 0 \\ 0 & a^{-1} \end{pmatrix} \, \right\vert a \in A^{\ast} \right\} \, \leq \, U(A) \, , \qquad \left. \left\{ \begin{pmatrix} 1 & b \\ 0 & 1 \end{pmatrix} \, \right\vert b \in A \right\} \, \unlhd\, U(A) \, .
\end{displaymath} In turn, the quotient group $T(A) \defeq U(A)/\{ \pm E_{2} \} \leq \mathrm{PSL}(2,A)$ is amenable, too. \end{remark}

\begin{prop}\label{proposition:monod} Let $A$ be a unital subring of $\mathbb{R}$. The topological group $H(A)_{p}$ is proximally simulated by its subgroup $T(A) \leq H(A)$. \end{prop}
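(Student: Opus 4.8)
The plan is to deduce the statement from Corollary~\ref{corollary:permutation.groups}(1), which reduces proximal simulation to the construction of a suitable filter on $\mathbf{P}^{1}$. Viewing $H(A)_{p}$ as a topological subgroup of $\mathrm{Sym}(\mathbf{P}^{1})$ and writing $\mathbf{P}^{1} = \mathbb{R} \cup \{ \infty \}$ with $\infty = [1:0]$, I would work with the filter $\mathscr{A}$ on $\mathbf{P}^{1}$ generated by the \emph{one-sided} neighbourhoods $I_{N} \defeq \{ x \in \mathbb{R} \mid x > N \} \cup \{ \infty \}$ of $\infty$, for $N \in \mathbb{R}$. By Corollary~\ref{corollary:permutation.groups}(1) and Remark~\ref{remark:permutation.groups}, it then suffices to verify three things: that $\mathscr{A}$ is $H(A)_{p}$-attractive, that $\mathscr{A}$ is $H(A)_{p}$-invariant, and that $H(A)_{p}$ is $\mathscr{A}$-approximated by $T(A)$, i.e.\ that every $g \in H(A)$ agrees with some element of $T(A)$ on some member of $\mathscr{A}$.

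For attractiveness I would use that $A$ is a unital subring of $\mathbb{R}$, hence contains $\mathbb{Z}$ and is unbounded: given a finite set $F \subseteq \mathbf{P}^{1}$ and a member $W \in \mathscr{A}$, choose $N$ with $I_{N} \subseteq W$; then any translation $x \mapsto x + b$ with $b \in A$ large enough lies in $T(A) \leq H(A)$, fixes $\infty$, and carries $F$ into $I_{N} \subseteq W$, so every member of $\mathscr{A}$ is attractive. The remaining two conditions both rest on one local observation: the stabiliser of $\infty$ in $\mathrm{PSL}(2,A)$ is exactly $T(A)$ (a matrix fixes $[1:0]$ iff its lower-left entry vanishes), and each $\gamma \in T(A)$ acts on a neighbourhood of $\infty$ as an affine map $x \mapsto a^{2}x + ab$ with $a \in A^{\ast}$, in particular with positive slope. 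Since $g \in H(A)$ has only finitely many pieces and fixes $\infty$, there is $N'$ such that $g$ coincides on $I_{N'}$ with a single piece $\gamma \in \mathrm{PSL}(2,A)$; continuity at $\infty$ forces $\gamma(\infty) = \infty$, whence $\gamma \in T(A)$. Taking $h \defeq \gamma$ and the member $I_{N'} \in \mathscr{A}$ gives $g\vert_{I_{N'}} = h\vert_{I_{N'}}$, which is precisely $\mathscr{A}$-approximation by $T(A)$. Invariance follows from the same agreement: for $W \in \mathscr{A}$ pick $N'' \geq N'$ with $I_{N''} \subseteq W$; since $\gamma$ is increasing and fixes $\infty$, one gets $g(W) \supseteq g(I_{N''}) = \gamma(I_{N''}) = I_{a^{2}N'' + ab} \in \mathscr{A}$, so $g(W) \in \mathscr{A}$.

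With these three properties in hand, Corollary~\ref{corollary:permutation.groups}(1) yields that $H(A)_{p}$ is proximally simulated by $T(A)$. The only genuinely structural input is the local-agreement fact above, and the point I would be most careful about is working consistently on a single side of $\infty$: an element of $H(A)$ may be governed by two \emph{different} elements of $T(A)$ on the two sides of $\infty$, so the filter must be taken one-sided (rather than as the full neighbourhood filter of $\infty$) in order to guarantee exact agreement with a single $h \in T(A)$ on a member of $\mathscr{A}$. Everything else—attractiveness via translations and invariance via the positive-slope affine action near $\infty$—is routine.
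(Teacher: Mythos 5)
Your proof is correct and follows essentially the same route as the paper: the same one-sided filter of right tails at $\infty$, the same reduction via Corollary~\ref{corollary:permutation.groups}(1), and the same three verifications ($H(A)$-attractiveness, $H(A)$-invariance, and $\mathscr{A}$-approximation of $H(A)$ by $T(A)$ as the stabilizer of $\infty$ in $\mathrm{PSL}(2,A)$). The only difference is one of detail: where the paper cites Monod's Lemma~16 for attractiveness and the preservation of the cyclic order for invariance, you verify both directly---via translations by large elements of $A \supseteq \mathbb{Z}$ and the explicit affine form $x \mapsto a^{2}x + ab$ of elements of $T(A)$ near $\infty$---which is equally valid.
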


\begin{proof} Let us consider the filter \begin{displaymath}
	\left. \mathscr{A} \defeq \left\{ B \subseteq \mathbf{P}^{1}  \, \right\vert \exists a \in \mathbb{R} \colon \, \{ [b:1] \mid b \in \mathbb{R}, \, a \leq b \} \cup \{ \infty \} \subseteq B \right\} .
\end{displaymath} Since $\infty \in \mathbf{P}^{1}$ is an $H(A)$-fixed point and $H(A)$ preserves the cyclic order of $\mathbf{P}^{1}$, the filter $\mathscr{A}$ is $H(A)$-invariant. Furthermore, an argument by Monod~\cite[Lemma~16]{monod} shows that $\mathscr{A}$ is $H(A)$-attractive. Finally, as $T(A)$ is precisely the stabilizer of $\infty = [1:0] \in \mathbf{P}^{1}$ in $\mathrm{PSL}(2,A)$, the definition of $H(A)$ readily entails that $H(A)$ is $\mathscr{A}$-approximated by $T(A)$, whence $H(A)$ is proximally simulated by $T(A)$, due to Corollary~\ref{corollary:permutation.groups}(1). \end{proof}

\begin{cor}\label{corollary:monod} Let $A$ be a unital subring of $\mathbb{R}$. Then $H(A)_{p}$ is skew-amenable. \end{cor}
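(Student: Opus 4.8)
The plan is to assemble the corollary directly from the machinery already in place, since all the substantive work has been carried out upstream. First I would invoke Proposition~\ref{proposition:monod}, which asserts that $H(A)_{p}$ is proximally simulated by its subgroup $T(A)$. As observed immediately after Definition~\ref{definition:proximal.simulation}, proximal simulation implies weak proximal simulation, so $H(A)_{p}$ is in particular weakly proximally simulated by $T(A)$.

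Next I would recall Remark~\ref{remark:monod}, where $T(A) = U(A)/\{\pm E_{2}\}$ is exhibited as an amenable group: it is a quotient of $U(A)$, which is itself the semi-direct product of two abelian subgroups and hence amenable as an abstract (discrete) group. Thus $T(A)$ is a discretely amenable subgroup of $H(A)_{p}$.

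With these two facts in hand, the conclusion follows at once from Corollary~\ref{corollary:weak.proximal.simulation.implies.amenability}, which states that any topological group weakly proximally simulated by some discretely amenable subgroup is skew-amenable. Applying this with $G = H(A)_{p}$ and the subgroup $T(A)$ yields skew-amenability of $H(A)_{p}$, as desired. (At the level of Proposition~\ref{proposition:weak.proximal.simulation.implies.amenability}, which underlies Corollary~\ref{corollary:weak.proximal.simulation.implies.amenability}, one uses the inclusion of $T(A)$ equipped with the discrete topology as the balanced homomorphism; this is legitimate because the inclusion of a discrete group is automatically balanced.)

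The point to stress is that there is no genuine obstacle at the level of the corollary itself: the entire difficulty has been absorbed into Proposition~\ref{proposition:monod}, and specifically into the verification that the filter $\mathscr{A}$ appearing in its proof is $H(A)$-attractive, which in turn rests on Monod's argument~\cite[Lemma~16]{monod}. Once proximal simulation by $T(A)$ is secured, the passage to skew-amenability is a purely formal application of the abstract criterion, relying only on the discrete amenability of $T(A)$ established in Remark~\ref{remark:monod}.
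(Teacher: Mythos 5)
Your proposal is correct and coincides with the paper's own proof: both cite Proposition~\ref{proposition:monod} for proximal (hence weak proximal) simulation by $T(A)$, Remark~\ref{remark:monod} for discrete amenability of $T(A)$, and then apply Corollary~\ref{corollary:weak.proximal.simulation.implies.amenability}. Your parenthetical justification via the balancedness of the inclusion of the discrete group $T(A)$ is exactly how the paper's Proposition~\ref{proposition:weak.proximal.simulation.implies.amenability} specializes to Corollary~\ref{corollary:weak.proximal.simulation.implies.amenability}.
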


\begin{proof} This is a consequence of Proposition~\ref{proposition:monod}, Remark~\ref{remark:monod}, and Corollary~\ref{corollary:weak.proximal.simulation.implies.amenability}. \end{proof}

Alternatively, Corollary~\ref{corollary:monod} above may be proved using Corollary~\ref{corollary1} and Remark~\ref{remark:monod}, via a more concrete rendering of the argument proving Proposition~\ref{proposition:weak.proximal.simulation.implies.amenability}. Similarly, one may deduce Corollary~\ref{corollary:F} below from Corollary~\ref{corollary1} and amenability of $\mathbb{Z}$, by suitably reproducing the argument from the proof of Proposition~\ref{proposition:weak.proximal.simulation.implies.amenability}. Furthermore, let us note the following immediate consequences of our observations above:

\begin{cor} Let $A$ be a unital subring of $\mathbb{R}$. The following hold: \begin{itemize}
	\item[$(1)$] \,$H(A)_{u}$ is proximally simulated by its subgroup $T(A) \leq H(A)$.
	\item[$(2)$] \,$H(A)_{u}$ is skew-amenable.
\end{itemize} \end{cor}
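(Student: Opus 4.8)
The plan is to obtain both assertions by transferring the corresponding statements about $H(A)_{p}$—namely Proposition~\ref{proposition:monod} and Corollary~\ref{corollary:monod}—along the fact, recorded in the discussion preceding Remark~\ref{remark:monod}, that the topology of $H(A)_{u}$ is coarser than the one carried by $H(A)_{p}$. The two upshots of this are that $\mathscr{U}(H(A)_{u}) \subseteq \mathscr{U}(H(A)_{p})$ and that $T(A)$ is literally the same subgroup of the common underlying abstract group $H(A)$ in either topology.

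For part $(1)$ I would argue that proximal simulation is monotone under passing to a coarser group topology. Fixing any $E \in \Pfin(H(A))$, Proposition~\ref{proposition:monod} supplies a family $(h_{g})_{g \in E} \in T(A)^{E}$ witnessing proximal simulation of $H(A)_{p}$, and I claim the same family works for $H(A)_{u}$. The point is that the only place where the topology enters the defining condition of Definition~\ref{definition:proximal.simulation}$(1)$ is the universal quantifier $\forall U \in \mathscr{U}(G)$: the finite sets $S \in \Pfin(G)$, the translating element $t \in G$, and the family $(h_{g})$ all live in the abstract group $H(A)$ and are insensitive to the choice of topology. Since $\mathscr{U}(H(A)_{u}) \subseteq \mathscr{U}(H(A)_{p})$, verifying $gst \in h_{g}st\,U$ for every $U \in \mathscr{U}(H(A)_{u})$ is a strictly weaker demand than the one already met for $H(A)_{p}$. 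Hence $H(A)_{u}$ is proximally simulated by $T(A)$.

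For part $(2)$ there are two (equivalent) routes. The first mirrors the proof of Corollary~\ref{corollary:monod}: from part $(1)$ and the implication ``proximal simulation $\Rightarrow$ weak proximal simulation'' recorded after Definition~\ref{definition:proximal.simulation}, the group $H(A)_{u}$ is weakly proximally simulated by $T(A)$; as $T(A)$ is amenable as a discrete group by Remark~\ref{remark:monod}, Corollary~\ref{corollary:weak.proximal.simulation.implies.amenability} yields skew-amenability of $H(A)_{u}$. The second route is shorter still: the identity map $H(A)_{p} \to H(A)_{u}$ is a continuous (its target carries the coarser topology) surjective homomorphism, so skew-amenability of $H(A)_{p}$ from Corollary~\ref{corollary:monod} transfers to $H(A)_{u}$ directly by Lemma~\ref{lemma:basics}$(2)$.

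I expect no serious obstacle; the corollary is essentially a transfer principle. The one point that must be checked carefully is precisely the monotonicity invoked in part $(1)$: one has to confirm that the witnessing data $(h_{g})$ and $t$ depend only on the abstract group structure, so that shrinking the neighbourhood filter $\mathscr{U}(G)$ cannot invalidate them, and that the passage from proximal to weak proximal simulation used in the first route of part $(2)$ relies only on left-uniform continuity of the finite family drawn from $\mathrm{LUCB}(H(A)_{u})$. Granting these routine verifications, both parts follow at once.
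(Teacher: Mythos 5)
Your proposal is correct and takes essentially the same route as the paper, whose entire proof is the one-line observation that the topology of $H(A)_{p}$ is finer than that of $H(A)_{u}$, so that $(1)$ follows from Proposition~\ref{proposition:monod} and $(2)$ from Corollary~\ref{corollary:monod}. You merely make explicit the two transfer facts the paper leaves implicit: that Definition~\ref{definition:proximal.simulation}$(1)$ only references the topology through $\mathscr{U}(G)$, hence is inherited by coarser group topologies, and that skew-amenability passes along the continuous surjective identity homomorphism $H(A)_{p} \to H(A)_{u}$ via Lemma~\ref{lemma:basics}$(2)$.
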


\begin{proof} Since the topology of $H(A)_{p}$ is finer than that of $H(A)_{u}$, statement~(1) is an immediate consequence of Proposition~\ref{proposition:monod}, and~(2) follows directly from Corollary~\ref{corollary:monod}. \end{proof}

On the other hand, combining Corollary~\ref{corollary:extensive} with the work Juschenko, Matte Bon, Monod, and de la Salle~\cite{JMMS}, we make the following observation.

\begin{cor}\label{corollary:semidirect.monod} Let $A$ be any non-trivial amenable group and equip the group $A^{(\mathbf{P}^{1})}$ with the discrete topology. Then $A^{(\mathbf{P}^{1})} \! \rtimes H(\mathbb{R})_{p}$ is not skew-amenable. \end{cor}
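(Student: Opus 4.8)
The plan is to reduce the assertion directly to the equivalence furnished by Corollary~\ref{corollary:extensive}, thereby translating skew-amenability of the semi-direct product into extensive amenability of the underlying permutation action, and then to invoke the relevant non-amenability result of Juschenko, Matte Bon, Monod, and de la Salle~\cite{JMMS}.

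First I would verify that the hypotheses of Corollary~\ref{corollary:extensive} are met with $X \defeq \mathbf{P}^{1}$, $G \defeq H(\mathbb{R})_{p}$, and the non-trivial amenable group $A$. The group $A^{(\mathbf{P}^{1})}$ carries the discrete topology as required. Moreover, since $H(\mathbb{R})$ acts faithfully by homeomorphisms on $\mathbf{P}^{1}$, the underlying abstract group embeds into $\mathrm{Sym}(\mathbf{P}^{1})$, and the topology of $H(\mathbb{R})_{p}$---being the topology of pointwise convergence arising from the discrete topology on $\mathbf{P}^{1}$---coincides with the subspace topology inherited from $\mathrm{Sym}(\mathbf{P}^{1})$. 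Hence $H(\mathbb{R})_{p}$ is a topological subgroup of $\mathrm{Sym}(\mathbf{P}^{1})$, so the corollary applies.

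Applying Corollary~\ref{corollary:extensive} then yields that $A^{(\mathbf{P}^{1})} \rtimes H(\mathbb{R})_{p}$ is skew-amenable if and only if the (abstract) action of $H(\mathbb{R})$ on $\mathbf{P}^{1}$ is extensively amenable. It therefore remains only to record that this action fails to be extensively amenable, which is precisely the content of~\cite[Theorem~6.1]{JMMS}. Combining these two facts, the semi-direct product cannot be skew-amenable, as claimed.

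The step carrying all the weight is the last one: the genuine mathematical obstruction is the non-extensive amenability of the $H(\mathbb{R})$-action on $\mathbf{P}^{1}$, the deep external input from~\cite{JMMS}; everything else amounts to checking that our setup fits the framework of Corollary~\ref{corollary:extensive}. In particular, no argument specific to the choice of $A$ is needed, since Corollary~\ref{corollary:extensive} holds uniformly for every non-trivial amenable group.
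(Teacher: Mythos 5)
Your proposal is correct and follows exactly the paper's own argument: apply the equivalence of Corollary~\ref{corollary:extensive} (with $X = \mathbf{P}^{1}$, $G = H(\mathbb{R})_{p}$ viewed as a topological subgroup of $\mathrm{Sym}(\mathbf{P}^{1})$) and then invoke~\cite[Theorem~6.1]{JMMS} for the failure of extensive amenability of the $H(\mathbb{R})$-action on $\mathbf{P}^{1}$. The only difference is that you spell out the (routine) verification that the pointwise-convergence topology on $H(\mathbb{R})$ makes it a topological subgroup of $\mathrm{Sym}(\mathbf{P}^{1})$, which the paper leaves implicit.
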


\begin{proof} Due to~\cite[Theorem~6.1]{JMMS}, the action of $H(\mathbb{R})$ on $\mathbf{P}^{1}$ is not extensively amenable. Consequently, the desired statement follows by Corollary~\ref{corollary:extensive}. \end{proof}

In particular, Corollary~\ref{corollary:semidirect.monod} provides an example of a skew-amenable (non-archimedean) topological group acting continuously by automorphisms on a discrete abelian group such that the resulting semi-direct product is not skew-amenable.

We conclude this section by remarking that the topological group $H(\mathbb{R})_{p}$ is furthermore extremely amenable, as follows from Pestov's work~\cite{pestov}. To provide some more detail, let us endow the set $\mathbf{P}^{1}\setminus \{ \infty\}$ with the linear order $\preceq$ inherited from $\mathbb{R}$ under the bijection \begin{displaymath}
	\mathbb{R} \, \longrightarrow \, \mathbf{P}^{1}\setminus \{ \infty\}, \quad x \, \longmapsto \, [x:1] ,
\end{displaymath} i.e., the one defined by \begin{displaymath}
	[x:1] \preceq [y:1] \quad :\Longleftrightarrow \quad x \leq y \qquad (x,y \in \mathbb{R}),
\end{displaymath} and note that $\preceq$ is $H(\mathbb{R})$-invariant. The following observations are well known.

\begin{lem}\label{lemma:two.transitive} Let $x_{0},x_{1},y_{0},y_{1} \in \mathbf{P}^{1}\setminus \{ \infty\}$ such that $x_{0} \prec x_{1}$ and $y_{0} \prec y_{1}$. Then there exists $g \in \mathrm{PSL}(2,\mathbb{R})$ such that $gx_{0}=y_{0}$ and $gx_{1}=y_{1}$. \end{lem}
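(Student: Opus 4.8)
The plan is to reduce the statement to the elementary affine geometry of the stabilizer of $\infty$ and then solve explicitly. First I would invoke the identification $\mathbb{R} \to \mathbf{P}^{1}\setminus\{\infty\}$, $x \mapsto [x:1]$, under which, by definition of $\preceq$, the order on $\mathbf{P}^{1}\setminus\{\infty\}$ becomes the usual order of $\mathbb{R}$; thus the hypotheses amount to $x_{0} < x_{1}$ and $y_{0} < y_{1}$ for the corresponding real numbers, which I keep denoting $x_{0},x_{1},y_{0},y_{1}$. Recall that a matrix $\begin{pmatrix} a & b \\ c & d \end{pmatrix} \in \mathrm{SL}(2,\mathbb{R})$ acts on $\mathbf{P}^{1}$ via $z \mapsto \frac{az+b}{cz+d}$, and that its image fixes $\infty = [1:0]$ if and only if $c = 0$; in that case the action on $\mathbf{P}^{1}\setminus\{\infty\}$ is the affine map $z \mapsto a^{2}z + ab$ (using $ad = 1$). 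Since $a^{2} > 0$, these are exactly the orientation-preserving affine transformations $z \mapsto \alpha z + \beta$ with $\alpha > 0$, and they constitute precisely the subgroup $T(\mathbb{R}) \leq \mathrm{PSL}(2,\mathbb{R})$ from Remark~\ref{remark:monod}.

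The key step is to solve for such an affine map sending $x_{0} \mapsto y_{0}$ and $x_{1} \mapsto y_{1}$. Setting $\alpha \defeq \frac{y_{1}-y_{0}}{x_{1}-x_{0}}$ and $\beta \defeq y_{0} - \alpha x_{0}$, one checks at once that $z \mapsto \alpha z + \beta$ does the job. The single point where the hypotheses enter is the positivity of $\alpha$: since $x_{0} < x_{1}$ and $y_{0} < y_{1}$, both numerator and denominator are strictly positive, so $\alpha > 0$. This is exactly what keeps us inside $\mathrm{PSL}(2,\mathbb{R})$ rather than forcing a passage to $\mathrm{PGL}(2,\mathbb{R})$.

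Finally I would exhibit the realizing matrix: with $a \defeq \sqrt{\alpha}$, $b \defeq \beta/\sqrt{\alpha}$ and $d \defeq 1/\sqrt{\alpha}$, the matrix $\begin{pmatrix} a & b \\ 0 & d \end{pmatrix}$ lies in $\mathrm{SL}(2,\mathbb{R})$ and its image $g \in \mathrm{PSL}(2,\mathbb{R})$ acts by $z \mapsto a^{2}z + ab = \alpha z + \beta$, whence $gx_{0} = y_{0}$ and $gx_{1} = y_{1}$. There is no substantive obstacle in this argument; the only content is the observation that orientation preservation, encoded by $\alpha > 0$, is precisely what the order hypotheses provide, so that in fact $g$ may be taken in $T(\mathbb{R})$.
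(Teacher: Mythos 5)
Your proof is correct and follows essentially the same route as the paper's: both construct the required element explicitly as a real $2\times 2$ matrix whose determinant, namely your slope $\alpha = \frac{y_{1}-y_{0}}{x_{1}-x_{0}}$, is strictly positive precisely because of the order hypotheses, and then rescale by the square root of that determinant to land in $\mathrm{SL}(2,\mathbb{R})$; in fact, multiplying out the paper's product $A = \begin{pmatrix} t_{0} & t_{1} \\ 1 & 1 \end{pmatrix}\begin{pmatrix} s_{0} & s_{1} \\ 1 & 1 \end{pmatrix}^{-1}$ shows it is upper triangular and coincides with your affine matrix, so the two arguments produce the very same element of $\mathrm{PSL}(2,\mathbb{R})$. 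The only thing you add is the observation that $g$ may be taken in $T(\mathbb{R})$, i.e., fixing $\infty$ --- a mild strengthening the paper does not record (it proves the $\infty$-fixing statement it needs separately as Lemma~\ref{lemma:one.transitive}, which your version would render unnecessary in the proof of Proposition~\ref{proposition:strong.transitivity}).
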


\begin{proof} Let $s_{0},s_{1},t_{0},t_{1} \in \mathbb{R}$ such that $x_{0} = [s_{0}:1]$, $x_{1} = [s_{1}:1]$, $y_{0} = [t_{0}:1]$, and $y_{1} = [t_{1}:1]$. Since $x_{0} \prec x_{1}$ and $y_{0} \prec y_{1}$, we have $s_{0} < s_{1}$ and $t_{0} < t_{1}$. Hence, considering \begin{displaymath}
	A \, \defeq \, \begin{pmatrix} t_{0} & t_{1} \\ 1 & 1 \end{pmatrix} \begin{pmatrix} s_{0} & s_{1} \\ 1 & 1 \end{pmatrix}^{-1} \in \, \mathrm{GL}(2,\mathbb{R}) ,
\end{displaymath} we conclude that \begin{displaymath}
	d \, \defeq \, \det A \, = \, \tfrac{t_{0}-t_{1}}{s_{0}-s_{1}} \, > \, 0 .
\end{displaymath} Consequently, the matrix $\tfrac{1}{\sqrt{d}}A$ belongs to $\mathrm{SL}(2,\mathbb{R})$, thus projects to an element $g \in \mathrm{PSL}(2,\mathbb{R})$. By construction, \begin{displaymath}
	A \begin{pmatrix} s_{0} \\ 1 \end{pmatrix} = \begin{pmatrix} t_{0} \\ 1 \end{pmatrix} , \qquad A \begin{pmatrix} s_{1} \\ 1 \end{pmatrix} = \begin{pmatrix} t_{1} \\ 1 \end{pmatrix}
\end{displaymath} and therefore $gx_{0}=y_{0}$ and $gx_{1}=y_{1}$, as desired. \end{proof}

\begin{lem}\label{lemma:one.transitive} For any $x,y \in \mathbf{P}^{1}\setminus \{ \infty\}$, there exists $g \in \mathrm{PSL}(2,\mathbb{R})$ with $gx=y$ and $g\infty = \infty$. \end{lem}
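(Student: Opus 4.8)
The plan is to exhibit an explicit element of $\mathrm{PSL}(2,\mathbb{R})$ fixing $\infty$, namely a translation. Recall that the upper unitriangular matrices fix $[1:0] = \infty$ and, under the identification $[z:1] \leftrightarrow z$, act on $\mathbf{P}^{1}\setminus\{\infty\} \cong \mathbb{R}$ precisely by translations. Since translations act transitively on $\mathbb{R}$, this will immediately furnish the required $g$. This mirrors the proof of Lemma~\ref{lemma:two.transitive}, but is simpler since we only prescribe a single point (together with the fixed point $\infty$), so no genuine two-parameter matrix is needed.

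Concretely, first I would choose $s,t \in \mathbb{R}$ with $x = [s:1]$ and $y = [t:1]$, which is possible as $x,y \in \mathbf{P}^{1}\setminus\{\infty\}$. Then I would set
\begin{displaymath}
	A \, \defeq \, \begin{pmatrix} 1 & t-s \\ 0 & 1 \end{pmatrix} \, \in \, \mathrm{SL}(2,\mathbb{R}) ,
\end{displaymath}
noting that $\det A = 1$, so that $A$ projects to an element $g \in \mathrm{PSL}(2,\mathbb{R})$.

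Finally I would verify the two required properties by direct matrix multiplication. On the one hand,
\begin{displaymath}
	A \begin{pmatrix} 1 \\ 0 \end{pmatrix} = \begin{pmatrix} 1 \\ 0 \end{pmatrix} ,
\end{displaymath}
so that $g\infty = \infty$; on the other hand,
\begin{displaymath}
	A \begin{pmatrix} s \\ 1 \end{pmatrix} = \begin{pmatrix} s + (t-s) \\ 1 \end{pmatrix} = \begin{pmatrix} t \\ 1 \end{pmatrix} ,
\end{displaymath}
so that $gx = y$, as desired. There is no real obstacle here: the only point to keep in mind is that $g$ must land in $\mathrm{PSL}(2,\mathbb{R})$, which is automatic because the translation matrix $A$ already has determinant $1$ and hence lies in $\mathrm{SL}(2,\mathbb{R})$ without rescaling.
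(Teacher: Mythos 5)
Your proposal is correct and coincides with the paper's own proof: both take the unipotent matrix $\begin{pmatrix} 1 & t-s \\ 0 & 1 \end{pmatrix} \in \mathrm{SL}(2,\mathbb{R})$ and verify by direct matrix multiplication that its image in $\mathrm{PSL}(2,\mathbb{R})$ sends $[s:1]$ to $[t:1]$ and fixes $[1:0]$. Nothing is missing.
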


\begin{proof} Let $s,t \in \mathbb{R}$ and consider $x \defeq [s:1], \, y \defeq [t:1] \in \mathbf{P}^{1}\setminus \{ \infty\}$. Then the matrix \begin{displaymath}
	A \, \defeq \, \begin{pmatrix} 1 & t-s \\ 0 & 1\end{pmatrix} \in \, \mathrm{SL}(2,\mathbb{R})
\end{displaymath} satisfies \begin{displaymath}
	A \begin{pmatrix} s \\ 1 \end{pmatrix} = \begin{pmatrix} t \\ 1 \end{pmatrix} , \qquad A \begin{pmatrix} 1 \\ 0 \end{pmatrix} = \begin{pmatrix} 1 \\ 0 \end{pmatrix} ,
\end{displaymath} and hence projects to an element $g \in \mathrm{PSL}(2,\mathbb{R})$ satisfying $gx=y$ and $g\infty = \infty$. \end{proof}

As usual, an action of a group $G$ on a set $X$ is called \emph{strongly transitive} if, for every $n \in \mathbb{N}$, the induced action of $G$ on the set of all $n$-element subsets of $X$ is transitive.

\begin{prop}\label{proposition:strong.transitivity} The action of $H(\mathbb{R})$ on $\mathbf{P}^{1}\setminus \{ \infty\}$ is strongly transitive. \end{prop}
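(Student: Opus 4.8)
The plan is to prove the stronger statement that $H(\mathbb{R})$ acts transitively on the set of strictly $\preceq$-increasing $n$-tuples in $\mathbf{P}^{1} \setminus \{\infty\}$. Since $\preceq$ is $H(\mathbb{R})$-invariant, every $n$-element subset of $\mathbf{P}^{1}\setminus\{\infty\}$ can be listed $\preceq$-increasingly, so this at once yields transitivity on $n$-element subsets, i.e.\ strong transitivity. Thus fix $n \in \mathbb{N}$ and two tuples $x_{0} \prec \dots \prec x_{n-1}$ and $y_{0} \prec \dots \prec y_{n-1}$ in $\mathbf{P}^{1} \setminus \{\infty\}$, identified with reals via $[x:1] \leftrightarrow x$; I aim to construct $g \in H(\mathbb{R})$ with $g x_{i} = y_{i}$ for all $i$. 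For $n = 1$ this is exactly Lemma~\ref{lemma:one.transitive}, so I assume $n \geq 2$.

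I would build $g$ piecewise, using the $n$ points $x_{0}, \dots, x_{n-1}$ as breakpoints along the cyclic order of $\mathbf{P}^{1}$. These cut $\mathbf{P}^{1}$ into $n$ arcs: the finite intervals $[x_{i}, x_{i+1}]$ for $0 \le i \le n-2$, together with the single arc running from $x_{n-1}$ through $\infty$ to $x_{0}$. On each finite arc I apply Lemma~\ref{lemma:two.transitive} to pick $g_{i} \in \mathrm{PSL}(2,\mathbb{R})$ with $g_{i}x_{i} = y_{i}$ and $g_{i}x_{i+1} = y_{i+1}$; as elements of $\mathrm{PSL}(2,\mathbb{R})$ are orientation-preserving and $\preceq$ is compatible with the cyclic orientation, $g_{i}$ carries $[x_{i}, x_{i+1}]$ onto $[y_{i}, y_{i+1}]$. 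On the remaining arc I take the unique affine map $\phi(x) = \alpha x + \beta$ with $\phi(x_{n-1}) = y_{n-1}$ and $\phi(x_{0}) = y_{0}$; its slope $\alpha = (y_{n-1} - y_{0})/(x_{n-1} - x_{0})$ is positive because $x_{0} \prec x_{n-1}$ and $y_{0} \prec y_{n-1}$, so $\phi$ lies in the stabilizer of $\infty$ in $\mathrm{PSL}(2,\mathbb{R})$, which acts on $\mathbf{P}^{1}\setminus\{\infty\} \cong \mathbb{R}$ by the affine maps of positive slope (cf.\ Remark~\ref{remark:monod}). Hence $\phi$ fixes $\infty$ and carries the $\infty$-arc onto its $y$-counterpart. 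Setting $g$ equal to $g_{i}$ on $[x_{i}, x_{i+1}]$ and to $\phi$ on the $\infty$-arc produces a candidate assembled from finitely many pieces of $\mathrm{PSL}(2,\mathbb{R})$.

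It then remains to verify that $g$ is a genuine element of $H(\mathbb{R})$ realizing the tuple. At each breakpoint $x_{i}$ the two adjacent pieces agree (both send $x_{i}$ to $y_{i}$), so $g$ is continuous; being a continuous bijection of the compact Hausdorff space $\mathbf{P}^{1}$, it is a homeomorphism. It fixes $\infty$ since the affine piece does, its pieces are finitely many arcs in the cyclic order each lying in $\mathrm{PSL}(2,\mathbb{R})$, and its breakpoints $x_{0}, \dots, x_{n-1}$ all belong to $P_{\mathbb{R}}$; indeed $P_{\mathbb{R}} = \mathbf{P}^{1}$, since any $z = [v] \in \mathbf{P}^{1}$ is fixed by a hyperbolic element of $\mathrm{PSL}(2,\mathbb{R})$ obtained by scaling $v$ by some $\lambda > 1$ and a complementary basis vector by $\lambda^{-1}$ (trace $\lambda + \lambda^{-1} > 2$). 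Therefore $g \in H(\mathbb{R})$ and $gx_{i} = y_{i}$ for every $i$, as required. The only genuinely delicate point is the arc through $\infty$: the matching map there must simultaneously fix $\infty$ and respect both endpoints $x_{0}, x_{n-1}$, which is precisely why the positivity of the affine slope (forced by the ordering) is essential. Everything else reduces to the two cited transitivity lemmas, the patching argument, and the compactness of $\mathbf{P}^{1}$.
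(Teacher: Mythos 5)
Your proposal is correct and takes essentially the same approach as the paper: both construct $g \in H(\mathbb{R})$ piecewise with breakpoints $x_{0},\ldots,x_{n-1}$, applying Lemma~\ref{lemma:two.transitive} on the bounded arcs, using affine maps fixing $\infty$ near infinity, and invoking $P_{\mathbb{R}} = \mathbf{P}^{1}$ to certify the breakpoints. The only immaterial difference is that the paper patches the unbounded region with two maps supplied by Lemma~\ref{lemma:one.transitive} (one on $\{x : x \preceq x_{0}\}$, one on $\{x : x_{n-1} \prec x\}$), whereas you use a single explicit affine map on the whole arc through $\infty$; your extra verifications (the pasting argument, orientation-preservation, and the hyperbolic-element proof that $P_{\mathbb{R}} = \mathbf{P}^{1}$) simply spell out what the paper leaves as evident.
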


\begin{proof} Consider any two sequences $x_{0},\ldots,x_{n-1},y_{0},\ldots,y_{n-1} \in \mathbf{P}^{1}\setminus \{ \infty\}$ with $x_{0} \prec \ldots \prec x_{n-1}$ and $y_{0} \prec \ldots \prec y_{n-1}$. By Lemma~\ref{lemma:two.transitive}, we find $g_{1},\ldots,g_{n-1} \in \mathrm{PSL}(2,\mathbb{R})$ such that \begin{displaymath}
	\forall i \in \{ 1,\ldots,n-1 \} \colon \qquad g_{i}x_{i-1} = y_{i-1}, \quad g_{i}x_{i} = y_{i} .
\end{displaymath} Moreover, by Lemma~\ref{lemma:one.transitive}, there exist $g_{0},g_{n} \in \mathrm{PSL}(2,\mathbb{R})$ such that $g_{0}x_{0} = y_{0}$, $g_{n}x_{n-1} = y_{n-1}$ and $g_{0}\infty = \infty = g_{n}\infty$. Furthermore, we observe that $P_{\mathbb{R}} = \mathbf{P}^{1}$. Thus, we may define an element $g \in H(\mathbb{R})$ by setting $g \infty \defeq \infty$ and \begin{displaymath}
	gx \, \defeq \, \begin{cases}
						\, g_{0}x & \text{if } x \preceq x_{0} , \\
						\, g_{1}x & \text{if } x_{0} \prec x \preceq x_{1} , \\
						\, \vdots & \vdots  \\
						\, g_{n-1}x & \text{if } x_{n-2} \prec x \preceq x_{n-1} , \\
						\, g_{n}x & \text{if } x_{n-1} \prec x
					\end{cases}
\end{displaymath} for every $x \in \mathbf{P}^{1}\setminus \{ \infty\}$. Evidently, $gx_{i} = y_{i}$ for each $i \in \{ 0,\ldots,n-1 \}$, as desired. \end{proof}

As a consequence of Proposition~\ref{proposition:strong.transitivity}, the topological group $H(\mathbb{R})_{p}$ is isomorphic to a dense topological subgroup of $\mathrm{Aut}\! \left( \mathbf{P}^{1} \setminus \{ \infty\}, {\preceq}\right)$. Since the latter is extremely amenable by Pestov's work~\cite{pestov}, we arrive at the following corollary.

\begin{cor}\label{corollary:monod.extremely.amenable} $H(\mathbb{R})_{p}$ is extremely amenable. \end{cor}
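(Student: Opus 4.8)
The plan is to realise $H(\mathbb{R})_{p}$ as a dense topological subgroup of the order-automorphism group $\mathrm{Aut}(\mathbf{P}^{1}\setminus\{\infty\},\preceq)$ and then to transfer extreme amenability from the latter, which is an instance of Pestov's theorem on automorphism groups of linear orders~\cite{pestov}. First I would record that every element of $H(\mathbb{R})$ fixes $\infty$ and preserves the cyclic order of $\mathbf{P}^{1}$, hence preserves the linear order $\preceq$ on $\mathbf{P}^{1}\setminus\{\infty\}$; this yields an injective homomorphism $H(\mathbb{R})\to\mathrm{Aut}(\mathbf{P}^{1}\setminus\{\infty\},\preceq)$. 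Because $\infty$ is a global fixed point, the basic identity neighbourhoods $U_{G}(F)$ of $H(\mathbb{R})_{p}$ (with $F\in\Pfin(\mathbf{P}^{1})$) and those of the image (with $F\in\Pfin(\mathbf{P}^{1}\setminus\{\infty\})$) match up, so this homomorphism is a topological embedding.

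Next I would establish density. Given any $\phi\in\mathrm{Aut}(\mathbf{P}^{1}\setminus\{\infty\},\preceq)$ and any basic neighbourhood of $\phi$ determined by a finite set, I would enumerate that set as an increasing tuple $x_{0}\prec\cdots\prec x_{n-1}$; since $\phi$ is order-preserving, $\phi(x_{0})\prec\cdots\prec\phi(x_{n-1})$. The construction in the proof of Proposition~\ref{proposition:strong.transitivity} then produces an element $g\in H(\mathbb{R})$ with $gx_{i}=\phi(x_{i})$ for all $i$, so $g$ lies in the prescribed neighbourhood of $\phi$. This is precisely the order-preserving strengthening of strong transitivity that the proof of Proposition~\ref{proposition:strong.transitivity} actually delivers, and it shows that the image of $H(\mathbb{R})_{p}$ is dense.

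It then remains to invoke two facts. The ordered set $(\mathbf{P}^{1}\setminus\{\infty\},\preceq)$ is a dense linear order without endpoints, order-isomorphic to $(\mathbb{R},\leq)$ via $[x:1]\mapsto x$, and for such orders the order-automorphism group is extremely amenable by Pestov's work~\cite{pestov}. Finally, extreme amenability passes to dense subgroups: by Remark~\ref{remark:induced.uniformities}(1) together with density, the restriction map $\mathrm{RUCB}(\mathrm{Aut}(\mathbf{P}^{1}\setminus\{\infty\},\preceq))\to\mathrm{RUCB}(H(\mathbb{R})_{p})$ is an isometric isomorphism of unital Banach algebras intertwining left translations, so a left-invariant multiplicative mean on the former pulls back to one on the latter, exactly as in Lemma~\ref{lemma:basics}(1) but now keeping track of multiplicativity. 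Combining these, $H(\mathbb{R})_{p}$ is extremely amenable.

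I expect the only genuine obstacle to be the external input: confirming that Pestov's extreme amenability result applies to the uncountable order $(\mathbb{R},\leq)$ rather than merely to $(\mathbb{Q},\leq)$. The order-ultrahomogeneity of $\mathbb{R}$ (every order-isomorphism between finite subsets extends to an order-automorphism) together with the finite Ramsey theorem for linear orders is what underwrites this, and it is exactly this homogeneity that also makes the density step go through. The remaining verifications—the topological embedding and the dense-subgroup transfer—are routine and parallel Lemma~\ref{lemma:basics}(1).
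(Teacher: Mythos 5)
Your proposal is correct and takes essentially the same approach as the paper: both arguments embed $H(\mathbb{R})_{p}$ into $\mathrm{Aut}\!\left(\mathbf{P}^{1}\setminus\{\infty\},{\preceq}\right)$ and rest on the strong transitivity of Proposition~\ref{proposition:strong.transitivity} together with Pestov's work~\cite{pestov}. The only cosmetic difference is that the paper's proof invokes~\cite[Theorem~5.4]{pestov} directly for the strongly transitive action of $H(\mathbb{R})$ on $\mathbf{P}^{1}\setminus\{\infty\}$, whereas you pass through density of the image in the full automorphism group and then transfer extreme amenability to dense subgroups---which is precisely the route the paper itself sketches in the paragraph preceding Corollary~\ref{corollary:monod.extremely.amenable}.
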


\begin{proof} Let $X \defeq \mathbf{P}^{1} \setminus \{ \infty\}$. Since $H(\mathbb{R})$ fixes $\infty \in \mathbf{P}^{1}$ and preserves $\preceq$, the map \begin{displaymath}
	H(\mathbb{R})_{p} \, \longrightarrow \, \mathrm{Aut}(X,\preceq), \quad g \, \longmapsto \, g\vert_{X}
\end{displaymath} constitutes an embedding of topological groups. By Proposition~\ref{proposition:strong.transitivity}, the action of $H(\mathbb{R})$ on $X$ is strongly transitive, whence $H(\mathbb{R})_{p}$ is extremely amenable by~\cite[Theorem~5.4]{pestov}. \end{proof}

As the topology of $H(\mathbb{R})_{p}$ is finer than that of $H(\mathbb{R})_{u}$, the above Corollary~\ref{corollary:monod.extremely.amenable} immediately entails the extreme amenability of the topological group $H(\mathbb{R})_{u}$.

\section{Example: Thompson's group $F$}\label{section:thompson}

For this section, let us turn to Richard Thompson's group~$F$, defined to be the subgroup of $\mathrm{Aut} \!\left( \mathbb{Z}\! \left[ \tfrac{1}{2} \right],{\leq} \right)$ consisting precisely of those automorphisms which are piecewise affine (with finitely many pieces), have all their slopes contained in $\! \left. \left\{ 2^{k} \, \right\vert k \in \mathbb{Z} \right\}$, and furthermore agree with translations by (possibly two different) integers on $(-\infty, a] \cap \mathbb{Z}\! \left[ \tfrac{1}{2} \right]$ and $[a,\infty) \cap \mathbb{Z}\! \left[ \tfrac{1}{2} \right]$ for a sufficiently large dyadic rational $a \in \mathbb{Z}\!\left[ \tfrac{1}{2} \right]$. For more comprehensive background material on Thompson's group~$F$, the reader is referred to~\cite{BrinSquier,CannonFloydParry,haagerup,kaimanovich}. In the following, $\mathrm{Aut} \!\left( \mathbb{Z}\! \left[ \tfrac{1}{2} \right],{\leq} \right)$ and its subgroup $F$ will be considered as topological groups, carrying the respective topology of pointwise convergence induced by the action on the discrete topological space $\mathbb{Z}\! \left[ \tfrac{1}{2} \right]$, i.e., the subspace topology inherited from $\mathrm{Sym}\!\left( \mathbb{Z}\! \left[ \tfrac{1}{2} \right] \right)$.

\begin{prop}\label{proposition:F} The topological group $F$, carrying the topology of pointwise convergence, is proximally simulated by its subgroup $H \cong \mathbb{Z}$ consisting of all integer translations of $\mathbb{Z}\!\left[ \tfrac{1}{2} \right]$. \end{prop}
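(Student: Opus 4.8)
The plan is to apply Corollary~\ref{corollary:permutation.groups}(1), which states that if a topological subgroup $G \leq \mathrm{Sym}(X)$ is $\mathscr{A}$-approximated by a subgroup $H \leq G$ for some $G$-attractive and $G$-invariant filter $\mathscr{A}$ on $X$, then $G$ is proximally simulated by $H$. Here $X = \mathbb{Z}\!\left[\tfrac{1}{2}\right]$ and $G = F$, so the task reduces to producing the right filter and checking its two properties, together with the approximation statement for $H \cong \mathbb{Z}$. This mirrors exactly the structure of the proof of Proposition~\ref{proposition:monod} for Monod's group, with the role of the $H(A)$-fixed point $\infty$ now played by the ``tail'' behaviour of elements of $F$ at $\pm\infty$.

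First I would define the filter encoding the condition that a set contain a neighbourhood of $+\infty$ in $X$. Explicitly, let
\begin{displaymath}
	\mathscr{A} \, \defeq \, \! \left. \left\{ B \subseteq \mathbb{Z}\!\left[\tfrac{1}{2}\right] \, \right\vert \exists a \in \mathbb{Z}\!\left[\tfrac{1}{2}\right] \colon \, \left[a,\infty\right) \cap \mathbb{Z}\!\left[\tfrac{1}{2}\right] \subseteq B \right\} \! .
\end{displaymath}
This is clearly a filter. Next I would verify that $\mathscr{A}$ is $F$-invariant: each $g \in F$ agrees with a single integer translation on some final segment $[a,\infty) \cap \mathbb{Z}\!\left[\tfrac{1}{2}\right]$, and an integer translation maps any such final segment onto another one, so $g(B) \in \mathscr{A}$ whenever $B \in \mathscr{A}$. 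For $G$-attractiveness (in the discrete-metric sense of Remark~\ref{remark:permutation.groups}, i.e., for every finite $F_0 \subseteq X$ there is $g \in F$ with $g(F_0) \subseteq B$), I would observe that given a final segment $[a,\infty)$ and a finite set $F_0$, one can translate $F_0$ far to the right by a large integer—which is itself an element of $H \leq F$—so that its image lands inside $[a,\infty)$.

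The key remaining point is that $F$ is $\mathscr{A}$-approximated by $H$. By Remark~\ref{remark:permutation.groups}, this means: for every $g \in F$ there exist $h \in H$ and $A \in \mathscr{A}$ with $g\vert_A = h\vert_A$. But this is immediate from the defining property of $F$: every $g \in F$ agrees with translation by some integer $n$ on a final segment $[a,\infty) \cap \mathbb{Z}\!\left[\tfrac{1}{2}\right]$, and that translation by $n$ is precisely the element $h \in H \cong \mathbb{Z}$ we want, with $A = [a,\infty) \cap \mathbb{Z}\!\left[\tfrac{1}{2}\right] \in \mathscr{A}$. With all three hypotheses of Corollary~\ref{corollary:permutation.groups}(1) in place, that corollary yields proximal simulation of $F$ by $H$, completing the proof.

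The main obstacle I anticipate is not conceptual but definitional: one must be slightly careful that the chosen filter simultaneously supports $G$-attractiveness and $G$-invariance. Using the right tail $[a,\infty)$ rather than a two-sided neighbourhood of both $\pm\infty$ is what makes attractiveness work (a finite set can be translated into a right tail but not into an arbitrary small symmetric neighbourhood of both ends), while the agreement-with-a-single-translation behaviour of $F$ at $+\infty$ is exactly what guarantees approximation by $H \cong \mathbb{Z}$ against this same filter. Once the tail filter is fixed, every verification is a direct unwinding of the definition of $F$, so the essential work is choosing $\mathscr{A}$ correctly.
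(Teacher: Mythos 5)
Your proposal is correct and follows essentially the same route as the paper: the paper's proof also applies Corollary~\ref{corollary:permutation.groups}(1) to a tail filter on $\mathbb{Z}\!\left[\tfrac{1}{2}\right]$, verifying $F$-invariance, $F$-attractiveness, and $\mathscr{A}$-approximation by $H$ directly from the definition of $F$. The only (immaterial) difference is that the paper uses the filter of sets containing a left tail $(-\infty,a] \cap \mathbb{Z}\!\left[\tfrac{1}{2}\right]$ rather than your right-tail filter; since elements of $F$ agree with integer translations on both tails, either choice works identically.
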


\begin{proof} We observe that the filter \begin{displaymath}
	\mathscr{A} \, \defeq \, \left\{ A \subseteq \mathbb{Z}\!\left[ \tfrac{1}{2} \right]  \left\vert \, \exists a \in \mathbb{Z}\!\left[ \tfrac{1}{2} \right] \! \colon \, (-\infty, a] \cap \mathbb{Z}\! \left[ \tfrac{1}{2} \right] \subseteq A \right\} \right.
\end{displaymath} is both $F$-invariant and $F$-attractive. The definition of $F$ implies that $H$ $\mathscr{A}$-approximates $F$. By Corollary~\ref{corollary:permutation.groups}(1), this readily entails that $F$ is proximally simulated by $H$. \end{proof}

\begin{cor}\label{corollary:F} $F$ is skew-amenable with respect to the topology of pointwise convergence. \end{cor}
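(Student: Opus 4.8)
The plan is to apply the machinery built up in Section~\ref{section:proximal.simulation}, since the preceding Proposition~\ref{proposition:F} has already done the hard dynamical work. Concretely, Proposition~\ref{proposition:F} establishes that $F$, with the topology of pointwise convergence, is proximally simulated by its subgroup $H \cong \mathbb{Z}$ of integer translations. Proximal simulation is stronger than weak proximal simulation (as noted right after Definition~\ref{definition:proximal.simulation}), so $F$ is in particular weakly proximally simulated by $H$.

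The next step is to observe that $H \cong \mathbb{Z}$ is amenable as a discrete group. Since $H$ carries the subspace topology from $F$, and the relevant hypothesis in Corollary~\ref{corollary:weak.proximal.simulation.implies.amenability} is that the simulating subgroup be \emph{discretely} amenable (i.e., amenable when viewed as an abstract group), the abelian group $\mathbb{Z}$ qualifies immediately.

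With these two ingredients in hand, the conclusion is a direct invocation of Corollary~\ref{corollary:weak.proximal.simulation.implies.amenability}: a topological group weakly proximally simulated by a discretely amenable subgroup is skew-amenable. Applying this to $G = F$ and the discretely amenable subgroup $H \cong \mathbb{Z}$ yields skew-amenability of $F$. Thus the entire proof reduces to chaining Proposition~\ref{proposition:F} (proximal simulation, hence weak proximal simulation) with the amenability of $\mathbb{Z}$ and then quoting Corollary~\ref{corollary:weak.proximal.simulation.implies.amenability}.

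There is essentially no obstacle here, as all the substantive content has been front-loaded into Proposition~\ref{proposition:F} and the general criterion. The only point requiring a moment's care is recording explicitly that proximal simulation implies weak proximal simulation, so that the hypothesis of Corollary~\ref{corollary:weak.proximal.simulation.implies.amenability} is literally met; this is immediate from the definitions. The proof is therefore a one-line deduction.
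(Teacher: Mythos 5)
Your proposal is correct and is exactly the paper's own proof: the paper deduces Corollary~\ref{corollary:F} directly from Proposition~\ref{proposition:F}, Corollary~\ref{corollary:weak.proximal.simulation.implies.amenability}, and amenability of $\mathbb{Z}$, just as you do. Your added remark that proximal simulation implies weak proximal simulation is the same (immediate) observation the paper records after Definition~\ref{definition:proximal.simulation}.
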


\begin{proof} This follows directly from Proposition~\ref{proposition:F}, Corollary~\ref{corollary:weak.proximal.simulation.implies.amenability}, and amenability of $\mathbb{Z}$. \end{proof}

It is well known that the action of $F$ on $\mathbb{Z}\! \left[ \tfrac{1}{2} \right]$ is strongly transitive (see~\cite[Lemma~4.2]{CannonFloydParry}, for instance), which entails that $F$ constitutes a dense subgroup of $\mathrm{Aut} \!\left( \mathbb{Z}\! \left[ \tfrac{1}{2} \right],{\leq} \right)$. Hence, this gives yet another example showing that skew-amenability is not preserved under topological closures in general: despite skew-amenability of $F$ with respect to the topology of pointwise convergence, $\mathrm{Aut} \!\left( \mathbb{Z}\! \left[ \tfrac{1}{2} \right],{\leq} \right)$ fails to be skew-amenable (Proposition~\ref{proposition:aut.is.not.skew.amenable}).

Preparing the proof of Proposition~\ref{proposition:aut.is.not.skew.amenable}, we briefly recall some background concerning orderability of groups. A linear order $\preceq$ on a set $X$ will be called \emph{dense} if \begin{displaymath}
	\forall x,y \in X \colon \qquad (x \, \preceq \, y \ \wedge \ x \, \ne \, y) \ \Longrightarrow \  \left(\exists z \in X \setminus \{ x,y \} \colon \ x \, \preceq \, z \, \preceq \, y \right) .
\end{displaymath} Now, let $G$ be a group. A partial order $\preceq$ on $G$ is called \emph{left-invariant} (resp.,~\emph{right-invariant}) if $gx \preceq gy$ (resp.,~$xg \preceq yg$) for all $g,x,y \in G$ with $x \preceq y$. A partial order on~$G$ is said to be \emph{invariant} if it is both left- and right-invariant. Following~\cite{LinnellRhemtullaRolfsen}, a left-invariant (or right-invariant) linear order $\preceq$ on $G$ will be called \emph{discrete} if \begin{displaymath}
	\exists g \in G \setminus \{ e\} \colon \quad \{ h \in G \mid e \preceq h \preceq g \} \, = \, \{ e,g \} .
\end{displaymath} As is customary, $G$ will be called \emph{left-orderable} (resp.,~\emph{orderable}) if $G$ admits a left-invariant (resp.,~invariant) linear order. Moreover, let us recall the following three well-known and basic lemmata. For stronger and more refined results, the reader is referred to~\cite{LinnellRhemtullaRolfsen}.

\begin{lem}\label{lemma:basic.order} Suppose that $\preceq$ is a left-invariant (or right-invariant) partial order on a group~$G$. If $(G,{\preceq})$ admits a least or greatest element, then $\vert G \vert = 1$. \end{lem}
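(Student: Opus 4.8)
The plan is to exploit the fact that, for a left-invariant order, each left-translation is automatically an automorphism of the ordered set, and automorphisms must fix any extremal element. First I would reduce the four nominal cases to one. Replacing $\preceq$ by the reverse order $\succeq$ interchanges least and greatest elements while preserving left-invariance, so it suffices to treat the case of a least element. Likewise, the right-invariant case is handled verbatim by using right-translations $\rho_{g}$ in place of left-translations $\lambda_{g}$ below. Thus I may and will assume that $\preceq$ is left-invariant and that $(G,{\preceq})$ has a least element $m$, i.e. $m \preceq h$ for all $h \in G$.

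The key observation is that, for each $g \in G$, the left-translation $\lambda_{g}$ is an order-automorphism of $(G,{\preceq})$: left-invariance says precisely that $x \preceq y$ implies $gx \preceq gy$, and applying this with $g$ replaced by $g^{-1}$ shows that $\lambda_{g}^{-1} = \lambda_{g^{-1}}$ is order-preserving as well. Since an order-automorphism carries a least element to a least element, and a least element of a partial order is unique (by antisymmetry), $\lambda_{g}(m) = gm$ must again equal $m$ for every $g \in G$. Cancelling $m$ then gives $g = e$ for all $g$, so $|G| = 1$.

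If one prefers a direct computation avoiding the language of automorphisms, it runs as follows: applying minimality of $m$ to the element $g^{-1}m$ yields $m \preceq g^{-1}m$, and left-multiplying by $g$ gives $gm \preceq m$; combined with $m \preceq gm$ (again by minimality) and antisymmetry, this forces $gm = m$, hence $g = e$. I do not expect any genuine obstacle here; the only points requiring care are the systematic reduction across the four cases (least versus greatest, left- versus right-invariant) and the explicit invocation of antisymmetry, which is what guarantees both the uniqueness of the extremal element and the final equality $gm = m$.
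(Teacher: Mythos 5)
Your proof is correct and uses the same elementary ingredients as the paper's: invariance of the order, minimality, antisymmetry, and cancellation in the group. The difference is purely organizational—your argument shows in one step that the least element $m$ is fixed by every translation (via $m \preceq g^{-1}m$, hence $gm \preceq m \preceq gm$), whereas the paper first proves that the least element must be $e$ (from $g \preceq e$ one gets $g^{2} \preceq g$, hence $g^{2}=g$ and $g=e$) and then concludes $x=e$ for all $x$; your order-automorphism framing and your explicit reduction of the greatest-element and right-invariant cases are sound but do not change the substance of the argument.
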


\begin{proof} Without loss of generality, let us assume that $(G,{\preceq})$ admits a least element $g \in G$. In particular, $g \preceq e$. Since $\preceq$ is left-invariant (or right-invariant), this entails that $g^{2} \preceq g$, hence $g^{2} = g$ by hypothesis on $g$, and so $g= e$. Thus, $e$ is the least element of $(G,{\preceq})$. Consequently, if $x \in G$, then $e \preceq x^{-1}$ and therefore $x \preceq e$ by left-invariance (or right-invariance) of $\preceq$, which implies that $x = e$. This shows that $G = \{ e \}$, as desired. \end{proof}

\begin{lem}[cf.~\cite{LinnellRhemtullaRolfsen}, Lemma~1.1]\label{lemma:order.dichotomy} Let $\preceq$ be a left-invariant (or right-invariant) linear order on a group $G$. Then $\preceq$ is discrete if and only if $\preceq$ is not dense. \end{lem}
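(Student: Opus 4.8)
The plan is to prove both implications directly, exploiting the basic fact that for any fixed $a \in G$ the left multiplication $x \mapsto ax$ (and likewise the right multiplication $x \mapsto xa$) is an order-automorphism of $(G,\preceq)$: invariance gives one direction of the equivalence $u \preceq v \iff au \preceq av$, and applying invariance to $a^{-1}$ gives the other. This lets me translate an arbitrary ``gap'' in the order to a gap based at the identity $e$, and conversely, which is exactly the content relating density to discreteness.

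First I would dispatch the easy direction, discrete $\Longrightarrow$ not dense. If $\preceq$ is discrete, there is some $g \in G \setminus \{ e \}$ with $\{ h \in G \mid e \preceq h \preceq g \} = \{ e,g \}$. Since $g$ itself lies in this set, $e \preceq g$, and $g \neq e$ upgrades this to $e \prec g$. Taking $x \defeq e$ and $y \defeq g$ in the definition of density, there is no $z \in G \setminus \{ e,g \}$ with $e \preceq z \preceq g$, so $\preceq$ fails to be dense.

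For the converse, not dense $\Longrightarrow$ discrete, I assume $\preceq$ is left-invariant, the right-invariant case being entirely symmetric (replace left multiplications by right multiplications throughout). Failure of density provides $x,y \in G$ with $x \prec y$ and $\{ h \in G \mid x \preceq h \preceq y \} = \{ x,y \}$. I then set $g \defeq x^{-1}y$, which is $\neq e$ because $x \neq y$, and note $e \prec g$ (apply left multiplication by $x^{-1}$ to $x \prec y$). The core step is to check $\{ h \mid e \preceq h \preceq g \} = \{ e,g \}$: if $e \preceq h \preceq g$, then applying left multiplication by $x$ (an order-automorphism) gives $x \preceq xh \preceq xg = y$, so $xh \in \{ x,y \}$ and hence $h \in \{ e, x^{-1}y \} = \{ e,g \}$; conversely both $e$ and $g$ lie in the interval since $e \prec g$. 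Thus $g$ witnesses discreteness.

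I do not anticipate any real obstacle: the single point needing care is the standing justification that left and right multiplications are order-isomorphisms, which follows at once from invariance together with the invertibility of group elements. Everything else is a direct translation of the relevant interval by a group element.
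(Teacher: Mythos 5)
Your proof is correct and follows essentially the same route as the paper: the backward direction transports the gap $\{z \mid x \preceq z \preceq y\} = \{x,y\}$ to the identity via $g = x^{-1}y$ (or $g' = yx^{-1}$ in the right-invariant case), exactly as the paper's computation $\{h \mid e \preceq h \preceq g\} = x^{-1}\{z \mid x \preceq z \preceq y\} = \{e,g\}$ does. The only difference is presentational: you spell out the order-automorphism property of translations and the forward (obvious) direction, which the paper leaves implicit.
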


\begin{proof} The implication ($\Longrightarrow$) is obvious. In order to prove the implication ($\Longleftarrow$), suppose that $\preceq$ is not dense. Then there exist $x,y \in G$ such that $x \ne y$ and \begin{displaymath}
	\{ z \in G \mid x \preceq z \preceq y \} \, = \, \{ x,y \} .
\end{displaymath} Now, if $\preceq$ is left-invariant, then we consider $g \defeq x^{-1}y \in G \setminus \{ e \}$ and conclude that \begin{displaymath}
	\{ h \in G \mid e \preceq h \preceq g \} \, = \, x^{-1} \{ z \in G \mid x \preceq z \preceq y \} \, = \, x^{-1} \{ x,y \} \, = \, \{ e,g \} ;
\end{displaymath} and if $\preceq$ is right-invariant, then we consider $g' \defeq yx^{-1} \in G \setminus \{ e \}$ and observe that \begin{displaymath}
	\{ h \in G \mid e \preceq h \preceq g' \} \, = \, \{ z \in G \mid x \preceq z \preceq y \}x^{-1} \, = \, \{ x,y \}x^{-1} \, = \, \{ e,g' \} .
\end{displaymath} In any case, it follows that $\preceq$ is discrete. \end{proof}

In other words, Lemma~\ref{lemma:order.dichotomy} asserts that any left-invariant (or right-invariant) linear order on a group is either discrete or dense. For the next lemma, let us fix one more piece of notation: as usual, the \emph{center} of a group $G$ is defined to be $Z(G) \defeq \{ g \in G \mid \forall h \in G \colon \, gh = hg \}$.

\begin{lem}[cf.~\cite{LinnellRhemtullaRolfsen}, Theorem~2.1]\label{lemma:order.center} If a group $G$ admits an invariant discrete linear order, then $\vert Z(G) \vert > 1$. \end{lem}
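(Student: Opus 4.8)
The plan is to take the element witnessing discreteness and show directly that it lies in the center. By hypothesis there exists $g \in G \setminus \{e\}$ such that
\begin{displaymath}
	\{ h \in G \mid e \preceq h \preceq g \} \, = \, \{ e,g \} .
\end{displaymath}
In other words, $e \prec g$ and $g$ is the \emph{immediate successor} of $e$ in the linear order $\preceq$. I claim that this $g$ is central, which immediately yields $\vert Z(G) \vert > 1$ since $g \ne e$. The strategy is to exploit left-invariance and right-invariance \emph{separately} to produce two descriptions of the immediate successor of an arbitrary element, and then to invoke uniqueness of immediate successors in a linear order.

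First I would record the general fact that in a linear order an element has at most one immediate successor: if $a \prec b$, $a \prec c$, and both $\{x \mid a \preceq x \preceq b\} = \{a,b\}$ and $\{x \mid a \preceq x \preceq c\} = \{a,c\}$, then assuming $b \ne c$ and (without loss of generality) $b \prec c$ forces $b \in \{x \mid a \preceq x \preceq c\} = \{a,c\}$, which is impossible; hence $b = c$. Next, fix any $a \in G$. Since left multiplication by $a$ is an order automorphism of $(G,{\preceq})$, applying it to the defining equation gives
\begin{displaymath}
	\{ x \mid a \preceq x \preceq ag \} \, = \, a\{ h \mid e \preceq h \preceq g \} \, = \, \{ a,ag \} ,
\end{displaymath}
and from $e \prec g$ together with $g \ne e$ we get $a \prec ag$. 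Thus $ag$ is the immediate successor of $a$. Symmetrically, since right multiplication by $a$ is also an order automorphism (here the invariance on the right is used), applying it to the same equation yields
\begin{displaymath}
	\{ x \mid a \preceq x \preceq ga \} \, = \, \{ h \mid e \preceq h \preceq g \}a \, = \, \{ a,ga \} ,
\end{displaymath}
with $a \prec ga$, so $ga$ is likewise the immediate successor of $a$.

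By uniqueness of the immediate successor, I conclude $ag = ga$. As $a \in G$ was arbitrary, $g$ commutes with every element of $G$, i.e.\ $g \in Z(G)$. Since $g \ne e$, this gives $\vert Z(G) \vert > 1$, as desired. The only point requiring care is that the invariance hypothesis is genuinely two-sided: I expect the sole (minor) obstacle to be keeping the two one-sided arguments honest, namely verifying that left multiplication by $a$ sends the interval $[e,g]$ to $[a,ag]$ while right multiplication sends it to $[a,ga]$, so that both $ag$ and $ga$ are forced to be the unique successor of $a$. Everything else is formal, and no density considerations are needed for this direction.
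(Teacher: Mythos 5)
Your proof is correct. It shares the paper's starting point --- extract the discreteness witness $g \in G \setminus \{e\}$ with $\{h \in G \mid e \preceq h \preceq g\} = \{e,g\}$ and show that this very element is central --- but the mechanism for centrality is genuinely different. The paper argues via conjugates: invariance gives $e \prec h^{-1}gh$, so by the interval condition and linearity either $h^{-1}gh = g$ or $g \prec h^{-1}gh$; hence $g \preceq h^{-1}gh$ for \emph{every} $h$, and applying this to both $h$ and $h^{-1}$ (conjugating the second inequality back) squeezes $h^{-1}gh = g$. You instead argue via translates: left- and right-invariance (together with linearity) make left and right multiplication by $a$ order automorphisms, so $ag$ and $ga$ are each the immediate successor of $a$, and your auxiliary lemma --- uniqueness of immediate successors in a linear order --- forces $ag = ga$. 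The two arguments are dual formulations of the same identity ($h^{-1}gh = g$ versus $hg = gh$), and each invokes linearity at exactly one point: the paper in the dichotomy ``$h^{-1}gh \in \{e,g\}$ or $g \prec h^{-1}gh$,'' you in the uniqueness of successors. What your route buys is a cleaner modular structure: the order-theoretic content is isolated in a one-line lemma about linear orders, and no inequality juggling with conjugates is needed. What the paper's route buys is the classical formulation familiar from the theory of ordered groups (the least positive element of a discretely ordered group is central, cf.\ the cited Theorem~2.1 of Linnell--Rhemtulla--Rolfsen), which makes transparent exactly where two-sided invariance enters, namely in comparing $g$ with its conjugates. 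Both proofs are complete and equally elementary.
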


\begin{proof} Suppose that $\preceq$ is an invariant discrete linear order on a group $G$. Then there exists some $g \in G \setminus \{ e \}$ with $\{ h \in G \mid e \preceq h \preceq g \} = \{ e,g \}$. We observe that $g \preceq h^{-1}gh$ for all $h \in G$. Indeed, if $h \in G$, then $e \neq h^{-1}gh$, and $e \preceq h^{-1}gh$ by invariance of $\preceq$, wherefore $g \preceq h^{-1}gh$ due to our choice of $g$ and linearity of $\preceq$. Consequently, if $h \in G$, then both $g \preceq h^{-1}gh$ and $g \preceq hgh^{-1}$, i.e., $h^{-1}gh \preceq g$ by invariance of $\preceq$, whence $g = h^{-1}gh$. That is, $g \in Z(G)$. \end{proof}

Any free group (of finite or infinite rank) is an example of an orderable group, as was shown independently by Neumann~\cite[Corollary~3.3]{neumann} and Vinogradov~\cite{vinogradov}. The reader may consult~\cite[Section~2.1.2]{GroupsOrdersDynamics} for a detailed exposition of Vinogradov's argument, and~\cite{bergman} for an alternative proof. We will combine orderability of $F_{2}$ with Corollary~\ref{corollary2} to deduce our next result.

\begin{prop}\label{proposition:aut.is.not.skew.amenable} Let $\leq$ be a dense linear order on a countably infinite set $X$. Then $\mathrm{Aut}(X,{\leq})$ is not skew-amenable with respect to the topology of pointwise convergence associated with the discrete topology on $X$. \end{prop}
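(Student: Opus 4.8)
The plan is to invoke the characterisation of skew-amenability for automorphism groups of linear orders furnished by Corollary~\ref{corollary2}: writing $G \defeq \mathrm{Aut}(X,{\leq})$, it suffices to produce a \emph{single} finite set $S \subseteq X$ such that $\ell^{\infty}(\{ gS \mid g \in G \})$ admits no $G$-invariant mean. I would take $S = \{ x_{0} \}$ to be a singleton, so that the map $\{ gx_{0} \} \mapsto gx_{0}$ identifies $\{ gS \mid g \in G \}$ $G$-equivariantly with the orbit $Gx_{0}$, and then obstruct the mean by exhibiting a copy of the free group $F_{2}$ inside $G$ which acts freely on $Gx_{0}$. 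A free $F_{2}$-set admits a paradoxical decomposition, hence carries no $F_{2}$-invariant finitely additive probability measure and no $F_{2}$-invariant mean; since any $G$-invariant mean restricts to an $F_{2}$-invariant one, this will settle the proposition.

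The heart of the matter is to embed $F_{2}$ into $G$ with a free orbit, and here I would exploit the orderability of $F_{2}$ (Neumann~\cite{neumann}, Vinogradov~\cite{vinogradov}). Fix a bi-invariant linear order $\preceq$ on $F_{2}$. As $\vert F_{2} \vert > 1$, Lemma~\ref{lemma:basic.order} shows that $(F_{2},{\preceq})$ has neither a least nor a greatest element; since $F_{2}$ has trivial centre, Lemma~\ref{lemma:order.center} forbids $\preceq$ from being discrete, whence $\preceq$ is dense by Lemma~\ref{lemma:order.dichotomy}. Thus $(F_{2},{\preceq})$ is a countable dense linear order without endpoints, and so is order-isomorphic to $(\mathbb{Q},{\leq})$ by Cantor's uniqueness theorem. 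The left translations $\lambda_{w}$ $(w \in F_{2})$ are order-automorphisms of $(F_{2},{\preceq})$ acting simply transitively, so transporting along the Cantor isomorphism yields an embedding of $F_{2}$ into $\mathrm{Aut}(\mathbb{Q},{\leq})$ whose action on $\mathbb{Q}$ is free.

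It then remains to transfer this free action onto $X$ itself. Deleting the (at most two) endpoints of $(X,{\leq})$ produces a countable dense linear order $X^{\circ}$ without endpoints---density ensures that a deleted extremum leaves behind no new extremum---so $X^{\circ} \cong (\mathbb{Q},{\leq})$, again by Cantor's theorem. The free $F_{2}$-action on $X^{\circ}$ so obtained extends, fixing the endpoints, to bona fide elements of $G = \mathrm{Aut}(X,{\leq})$, yielding $F_{2} \leq G$ acting freely on $X^{\circ}$. Choosing $x_{0} \in X^{\circ}$, the orbit $Y \defeq Gx_{0} \subseteq X^{\circ}$ is $F_{2}$-invariant and the restricted $F_{2}$-action on $Y$ is still free. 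The classical paradoxical decomposition of $F_{2}$, applied orbitwise via a transversal for the $F_{2}$-orbits in $Y$, then excludes any $F_{2}$-invariant mean on $\ell^{\infty}(Y)$, hence any $G$-invariant mean; by Corollary~\ref{corollary2}, $G$ is not skew-amenable.

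The main obstacle---indeed the only genuinely non-formal step---is the construction of a free order-automorphism action of $F_{2}$ on $X$. This is precisely where the combination of the orderability of $F_{2}$, the forced \emph{density} of its bi-invariant order (Lemmata~\ref{lemma:basic.order}--\ref{lemma:order.center}), and Cantor's classification of countable dense orders does the essential work. The singleton reduction through Corollary~\ref{corollary2} and the paradoxicality of free $F_{2}$-sets are routine and would be dispatched quickly.
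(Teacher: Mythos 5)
Your proposal is correct and follows essentially the same route as the paper's proof: the paper likewise combines the Neumann--Vinogradov orderability of $F_{2}$ with Lemmata~\ref{lemma:basic.order}--\ref{lemma:order.center} to force the invariant order to be dense without endpoints, applies the back-and-forth theorem to identify $(F_{2},{\preceq})$ with $X$ stripped of its endpoints, and uses freeness of the resulting action together with non-amenability of $F_{2}$ to exclude invariant means. The only immaterial differences are that the paper concludes via Corollary~\ref{corollary:discrete.skew.amenability} applied to the whole set rather than via Corollary~\ref{corollary2} with a singleton orbit, and that it packages the paradoxicality argument as an intertwining positive unital operator $\Phi \colon \ell^{\infty}(F_{2}) \to \ell^{\infty}(X')$ instead of an explicit paradoxical decomposition.
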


\begin{proof} We start by noting that the subset $X' \defeq \{ y \in X \mid \exists x,z \in X \colon \, x < y < z\}$ is countably infinite (as $\vert X\setminus X' \vert \leq 2$) and $\mathrm{Aut}(X,{\leq})$-invariant, ${\leq'} \defeq {\leq} \cap (X' \times X')$ is a dense linear order on $X'$ without least or greatest element, and the well-defined map \begin{displaymath}
	\mathrm{Aut}(X,{\leq}) \, \longrightarrow \, \mathrm{Aut}(X',{\leq}') , \quad g \, \longmapsto \, g\vert_{X'}
\end{displaymath} is an isomorphism of topological groups. Now, according to the independent work of Neumann~\cite[Corollary~3.3]{neumann} and Vinogradov~\cite{vinogradov}, the free group $F_{2}$ (of rank two) admits an invariant linear order~$\preceq$. Since $Z(F_{2}) = \{ e \}$ and $\preceq$ is invariant, Lemma~\ref{lemma:order.center} asserts that $\preceq$ cannot be discrete, therefore must be dense by Lemma~\ref{lemma:order.dichotomy}. Furthermore, by Lemma~\ref{lemma:basic.order}, $(F_{2},{\preceq})$ neither admits a least, nor a greatest element. Due to a well-known back-and-forth argument (see~\cite[Lemma~6.3.3]{PestovBook}, for instance), it follows that $(X',{\leq}') \cong (F_{2},{\preceq})$. By virtue of this isomorphism, $F_{2}$ admits a free action by automorphisms on the ordered set $(X',{\leq}')$, which entails---by a standard argument---the existence of an embedding $\xi \colon F_{2} \to \mathrm{Aut}(X',{\leq}')$ along with a positive unital linear operator $\Phi \colon \ell^{\infty}(F_{2}) \to \ell^{\infty}(X')$ such that \begin{displaymath}
	\forall g \in F_{2} \ \forall f \in \ell^{\infty}(F_{2}) \colon \qquad \Phi (f \circ \lambda_{g}) \, = \, \Phi(f) \circ \xi (g) \, .
\end{displaymath} Consequently, as $F_{2}$ is non-amenable, $\ell^{\infty}(X')$ must not admit an $\mathrm{Aut}(X',{\leq}')$-invariant mean. Hence, by Corollary~\ref{corollary:discrete.skew.amenability}, $\mathrm{Aut}(X',{\leq}') \cong \mathrm{Aut}(X,{\leq})$ is not skew-amenable. \end{proof}

Finally in this note, we will relate the notoriously open question about amenability of the discrete group $F$ to skew-amenability of semi-direct products of topological groups. To this end, it will be convenient to consider another well-known representation of $F$. In this regard, let us note that the topological group $\mathrm{Aut} \!\left( \mathbb{Z}\! \left[ \tfrac{1}{2} \right],{\leq} \right)$ is isomorphic to $\mathrm{Aut}(D,{\leq})$, likewise endowed with the topology of pointwise convergence arising from the discrete topology on the set $D \defeq [0,1] \cap \mathbb{Z}\!\left[ \tfrac{1}{2} \right]$. To give some more detail, let us define $t_{n} \defeq 1 - \tfrac{1}{2^{n+1}}$ for $n \in \mathbb{N}$ and $t_{n} \defeq \tfrac{1}{2^{1-n}}$ for $n \in \mathbb{Z}\setminus \mathbb{N}$. Now, as remarked in~\cite[Remark~2.5]{haagerup} (see also~\cite[2.3]{kaimanovich}), the map $\kappa \colon (0,1) \to \mathbb{R}$ given by \begin{displaymath}
	\kappa (x) \, \defeq \, \tfrac{x-t_{n}}{t_{n+1}-t_{n}} + n \qquad (x \in [t_{n},t_{n+1}], \, n \in \mathbb{Z}) 
\end{displaymath} is an isomorphism between the linearly ordered sets $((0,1),{\leq})$ and $(\mathbb{R},{\leq})$ satisfying \begin{displaymath}
	\kappa \! \left( (0,1) \cap \mathbb{Z}\! \left[ \tfrac{1}{2} \right] \right) \! \, = \, \mathbb{Z}\! \left[ \tfrac{1}{2} \right] \! ,
\end{displaymath} which entails that the map $\phi \colon \mathrm{Aut} ( D,{\leq} ) \to \mathrm{Aut} \!\left( \mathbb{Z}\! \left[ \tfrac{1}{2} \right],{\leq} \right)$ defined by \begin{displaymath}
	\phi(g)(x) \, \defeq \, \kappa \!\left(g\!\left(\kappa^{-1}(x)\right)\right) \qquad \left( g \in \mathrm{Aut} ( D,{\leq} ), \, x \in \mathbb{Z}\!\left[ \tfrac{1}{2} \right] \right)
\end{displaymath} constitutes an isomorphism of topological groups. The image of $F$ under the inverse isomorphism $\phi^{-1}$ consists precisely of those elements of $\mathrm{Aut} ( D,{\leq} )$ which are piecewise affine (with finitely many pieces) and have all their slopes contained in the set $\! \left. \left\{ 2^{k} \, \right\vert k \in \mathbb{Z} \right\}$. As a matter of course, the topology of pointwise convergence on $F$ induced by its action on $D$ agrees with the (original) topology on $F$ inherited from $\mathrm{Aut} \!\left( \mathbb{Z}\! \left[ \tfrac{1}{2} \right],{\leq} \right)$, so that there will be no ambiguity in referring to \emph{the} topology of pointwise convergence on $F$.

The following result is known to experts.

\begin{thm}\label{theorem:kate} The action of $F$ on $D$ is extensively amenable if and only if the (discrete) group $F$ is amenable. \end{thm}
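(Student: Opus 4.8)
The plan is to treat the two implications separately, with essentially all of the work going into the forward one. The reverse implication is immediate: if the discrete group $F$ is amenable, then, since $(\mathbb{Z}/2\mathbb{Z})^{(D)}$ is abelian and hence amenable and amenability of discrete groups is stable under extensions, the discrete group $(\mathbb{Z}/2\mathbb{Z})^{(D)} \rtimes F$ is amenable. Any amenable discrete group carries an invariant mean on $\ell^{\infty}$ of every set on which it acts; applying this to the action $\widetilde{\tau}$ of $(\mathbb{Z}/2\mathbb{Z})^{(D)} \rtimes F$ on $(\mathbb{Z}/2\mathbb{Z})^{(D)}$ produces the required invariant mean on $\ell^{\infty}\!\left((\mathbb{Z}/2\mathbb{Z})^{(D)}\right)$, i.e.\ extensive amenability of the action of $F$ on $D$.

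For the forward implication I would first pass to the model of $F$ acting on $\mathbb{Z}\!\left[\tfrac12\right]$. Via the order isomorphism $\kappa$, the action of $F$ on $D$ is, apart from the two fixed points $0,1$, conjugate to the action of $F$ on $\mathbb{Z}\!\left[\tfrac12\right]$; since these fixed points contribute only a fixed direct factor $(\mathbb{Z}/2\mathbb{Z})^{2}$ to the configuration space, on which $F$ acts trivially, extensive amenability of the two actions is equivalent. I may thus assume that the action of $F$ on $\mathbb{Z}\!\left[\tfrac12\right]$ is extensively amenable and bring in the amenable subgroup $H \cong \mathbb{Z}$ of integer translations from Proposition~\ref{proposition:F}: by the very definition of $F$, every element agrees with integer translations outside a bounded set, so that $F$ is a group of \emph{piecewise-$H$} bijections of $\mathbb{Z}\!\left[\tfrac12\right]$ --- precisely the situation handled by the Juschenko--Monod amenability technique~\cite{JuschenkoMonod,JMMS}.

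The core step is to convert the given $(\mathbb{Z}/2\mathbb{Z})^{(\mathbb{Z}[1/2])} \rtimes F$-invariant mean on $\ell^{\infty}\!\left((\mathbb{Z}/2\mathbb{Z})^{(\mathbb{Z}[1/2])}\right)$ into a left-invariant mean on $\ell^{\infty}(F)$. Following Juschenko and Monod, I would attach to each $g \in F$ the finitely supported \emph{difference cocycle} $c(g) \in (\mathbb{Z}/2\mathbb{Z})^{(\mathbb{Z}[1/2])}$ recording the points at which $g$ deviates from the ambient integer translations; this $c$ is a cocycle for the action of $F$, so $g \mapsto (c(g),g)$ is an injective homomorphism of $F$ into $(\mathbb{Z}/2\mathbb{Z})^{(\mathbb{Z}[1/2])} \rtimes F$ under which the left-regular $F$-set is identified with the orbit of $0$ for the graph action. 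Averaging the invariant mean along the amenable translation direction $H$ so as to neutralise the non-free part of this action should then yield a left-invariant mean on $\ell^{\infty}(F)$, whence $F$ is amenable.

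The hard part is exactly this last transfer. The graph action of $F$ on the configuration space is far from free --- its stabilisers encode the copies of $F$ sitting inside the point-stabilisers of the action on $\mathbb{Z}\!\left[\tfrac12\right]$ --- so the invariant mean cannot simply be restricted to the free orbit isomorphic to $F$; carrying the invariance down to $F$ is the whole substance of the Juschenko--Monod argument, and is where amenability of the transversal $H \cong \mathbb{Z}$ is consumed. This is also why the statement is an equivalence rather than an unconditional amenability theorem: unlike the recurrent $\mathbb{Z}$-actions underlying topological full groups, the integer-translation action here is dissipative, so extensive amenability is a genuine hypothesis, and the residual gap it fills is precisely the (open) amenability of $F$.
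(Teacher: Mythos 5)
Your reverse implication is fine and coincides with the paper's. The forward implication, however, has genuine gaps, and they sit exactly where you locate ``the hard part.''

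First, your cocycle is ill-defined. An element of $F$ (in the $\mathbb{Z}\!\left[\tfrac12\right]$ model) agrees with integer translations only on its two unbounded pieces; on the bounded middle pieces it is affine with slopes $2^{k}$, $k \neq 0$, so it agrees with \emph{no} integer translation there. Consequently the set of points ``at which $g$ deviates from the ambient integer translations'' is a union of intervals of dyadic rationals, hence infinite, and your $c(g)$ is not an element of $(\mathbb{Z}/2\mathbb{Z})^{(\mathbb{Z}[1/2])}$; for the same reason $F$ is \emph{not} a group of piecewise-$H$ bijections for $H \cong \mathbb{Z}$, so the Juschenko--Monod piecewise-amenable machinery does not apply in the form you invoke. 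The finitely supported invariant that actually works is the set of slope discontinuities, weighted by the jump: the paper's cocycle $\eta(g)(x) = \log_{2} g_{+}'(x) - \log_{2} g_{-}'(x)$, which is $\mathbb{Z}$-valued (Lemma~\ref{lemma:cocycle}). This forces a coefficient change you never address: extensive amenability is defined via $(\mathbb{Z}/2\mathbb{Z})^{(D)}$, but the graph embedding needs $\mathbb{Z}^{(D)}$, and the paper bridges this by \cite[Theorem~1.3]{JMMS}, which upgrades the hypothesis to an invariant mean on $\ell^{\infty}\!\left(\mathbb{Z}^{(D)}\right)$ for the $\mathbb{Z}^{(D)} \rtimes F$-action. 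Reducing $\eta$ mod $2$ would not do: elements all of whose slope jumps are even powers of $2$ become invisible, and the key property below fails.

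Second, and more fundamentally, your transfer mechanism inverts the actual logic of the proof. You assert that the graph action is ``far from free'' and propose to ``average along the amenable translation direction $H$ to neutralise the non-free part'' --- but this averaging is precisely the step you cannot perform, and it is not what happens in the paper either. The paper's Proposition~\ref{proposition:kate} shows that the twisted affine action $\beta(g,f) = g\boldsymbol{.}f + \eta\!\left(g^{-1}\right)$ \emph{is} free: if $g \neq e$, let $t$ be the least point of $\spt(\eta(g))$; then $g$ fixes every $x \leq t$, so $\eta\!\left(g^{-1}\right)\!(t) = -\eta(g)(t) \neq 0$ and $\beta_{g}$ moves every configuration at the coordinate $t$. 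Freeness alone then yields, by the standard transversal argument, a positive unital operator $\Phi \colon \ell^{\infty}(F) \to \ell^{\infty}\!\left(\mathbb{Z}^{(D)}\right)$ intertwining left translation with $\beta$, and $\mu \circ \Phi$ is the desired left-invariant mean. In particular, amenability of the subgroup $H \cong \mathbb{Z}$ of integer translations plays no role whatsoever in this theorem; you have imported it from the proof of skew-amenability of $F$ (Proposition~\ref{proposition:F}), which is a different argument for a different statement. Also, your preliminary reduction from $D$ to $\mathbb{Z}\!\left[\tfrac12\right]$ is harmless but unnecessary: the paper runs the whole argument directly on $D$, where the endpoints $0,1$ are handled inside the freeness proof via $\eta(g^{-1})(0) = 0 = \eta(g^{-1})(1)$.
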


We include a proof of Theorem~\ref{theorem:kate} in the appendix. Combined with Corollary~\ref{corollary:extensive}, the theorem above naturally leads to our final result.

\begin{cor}\label{corollary:kate} Let $H$ be any non-trivial amenable group. Then the following are equivalent. \begin{itemize}
	\item[$(1)$] $H^{(D)} \! \rtimes F$ is skew-amenable with respect to the product topology arising from the discrete topology on $H^{(D)}$ and the topology of pointwise convergence on $F$.
	\item[$(2)$] The (discrete) group $F$ is amenable.
\end{itemize} \end{cor}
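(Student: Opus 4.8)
The plan is to obtain the claimed equivalence by simply concatenating two results that are already established, namely Corollary~\ref{corollary:extensive} and Theorem~\ref{theorem:kate}; no new argument is required beyond checking that the hypotheses line up.

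First I would record that, under the topology of pointwise convergence arising from the discrete space $D$, the group $F$ is a topological subgroup of $\mathrm{Sym}(D)$ --- indeed this is exactly how its topology was introduced at the start of this section, as the subspace topology inherited from $\mathrm{Sym}(D)$. Consequently Corollary~\ref{corollary:extensive} applies verbatim with $X \defeq D$ and $G \defeq F$: since $H$ is a non-trivial amenable group and $H^{(D)}$ carries the discrete topology, the corollary tells us that the topological group $H^{(D)} \rtimes F$ is skew-amenable if and only if the action of $F$ on $D$ is extensively amenable. Here the semi-direct product is formed with respect to the continuous action of $F$ by automorphisms on the discrete group $H^{(D)}$ described in Remark~\ref{remark:semidirect.product}(3), which is precisely the action underlying Corollary~\ref{corollary:extensive}.

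Second I would invoke Theorem~\ref{theorem:kate}, which asserts that the action of $F$ on $D$ is extensively amenable exactly when the discrete group $F$ is amenable. Chaining the two equivalences then yields that statement~(1) holds if and only if statement~(2) holds, as desired.

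There is no genuine obstacle at the level of this corollary itself: all of the substance has already been packaged into the two cited results. The weight of the argument rests on Corollary~\ref{corollary:extensive} (a consequence of Theorem~\ref{theorem:extensive}, which converts extensive amenability of the base action into skew-amenability of the associated semi-direct product), and on Theorem~\ref{theorem:kate}, whose proof --- deferred to the appendix --- is where the real work lies, relying on the extensive-amenability calculus of Juschenko, Matte Bon, Monod and de la Salle.
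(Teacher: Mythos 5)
Your proposal is correct and takes essentially the same route as the paper: both reduce the statement to Corollary~\ref{corollary:extensive} combined with Theorem~\ref{theorem:kate}. The only cosmetic difference is in the direction (2)$\Longrightarrow$(1), where the paper argues directly (amenability of the discrete group $F$ yields amenability of the discrete group $H^{(D)} \rtimes F$, hence skew-amenability of the topological group), while you chain the two cited equivalences; both arguments are valid.
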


\begin{proof} (2)$\Longrightarrow$(1). Evidently, if $F$ is amenable, then the discrete group $H^{(D)} \! \rtimes F$ is amenable, thus the topological group $H^{(D)} \! \rtimes F$ is skew-amenable.

(1)$\Longrightarrow$(2). Suppose that the topological group $H^{(D)} \! \rtimes F$ is skew-amenable. According to Corollary~\ref{corollary:extensive}, this implies that the action of $F$ on $D$ is extensively amenable, whence the discrete group $F$ must be amenable by Theorem~\ref{theorem:kate}. \end{proof}

\begin{remark} A statement analogous to Corollary~\ref{corollary:kate} applies to subgroups of the group \begin{displaymath}
	\mathrm{IET} \, \defeq \, \bigl\{ g \in \mathrm{Sym}(\mathbb{R}/\mathbb{Z}) \, \big\vert \text{ $g$ right-continuous, } \{ gx - x \mid x \in \mathbb{R}/\mathbb{Z} \} \text{ finite} \bigr\} 
\end{displaymath} of \emph{interval exchange transformations} of $\mathbb{R}/\mathbb{Z}$. More precisely, for any subgroup $G \leq \mathrm{IET}$ and any non-trivial amenable group $H$, the following are equivalent: \begin{enumerate}
	\item[$(1)$] $H^{(\mathbb{R}/\mathbb{Z})} \! \rtimes G$ is skew-amenable with respect to the product topology associated with the discrete topology on $H^{(\mathbb{R}/\mathbb{Z})}$ and the topology of pointwise convergence on $G$ arising from the discrete topology on $\mathbb{R}/\mathbb{Z}$.
	\item[$(2)$] The (discrete) group $G$ is amenable.
\end{enumerate} This is a consequence of~\cite[Proposition~5.3]{JMMS} and Corollary~\ref{corollary:extensive}, by the same of line of reasoning as in the proof of Corollary~\ref{corollary:kate}. \end{remark}

\appendix

\section{Proof of Theorem~\ref{theorem:kate}}

For the sake of completeness, we will include a proof of Theorem~\ref{theorem:kate}. To this end, let us agree on some additional notation. For each $g \in F$, we will consider the \emph{right-hand derivative} \begin{displaymath}
	g_{+}'(x) \, \defeq \, \lim_{D \ni t \to 0+} \frac{g(x+t) - g(x)}{t}
\end{displaymath} of $g$ at $x \in D\setminus \{ 1 \}$, as well as the \emph{left-hand derivative} \begin{displaymath}
	g_{-}'(x) \, \defeq \, \lim_{D \ni t \to 0+} \frac{g(x-t) - g(x)}{-t}
\end{displaymath} of $g$ at $x \in D \setminus \{ 0 \}$, respectively. We define a map $\eta \colon F \to \bigoplus_{D} \mathbb{Z}$ by setting \begin{displaymath}
	\eta (g)(x) \, \defeq \, \begin{cases}
		\, \log_{2} g'_{+}(x) - \log_{2} g'_{-}(x) & ( x \in D \setminus \{ 0,1 \} ) , \smallskip \\
		\, \log_{2} g_{+}'(0) & (x=0) , \smallskip \\
		\, - \log_{2} g_{-}'(1) & (x=1)
	\end{cases}
\end{displaymath} for each $g \in F$. Using the chain rule for left and right derivatives of monotone functions on~$D$, it is straightforward to verify the following.

\begin{lem}[cf.~\cite{kaimanovich}, 4.3, (11.18), p.~323]\label{lemma:cocycle} If $g,h \in F$ and $x \in D$, then \begin{displaymath}
		\eta (gh) (x) \, = \, \eta (g)(hx) + \eta (h)(x) .
\end{displaymath} \end{lem}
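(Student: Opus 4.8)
The plan is to derive this cocycle identity from the chain rule for one-sided derivatives, treating the interior points and the two boundary points $0,1$ separately in accordance with the piecewise definition of $\eta$. The starting observation is that every element of $F$ is the restriction to $D$ of a strictly increasing, piecewise affine self-homeomorphism of the real interval $[0,1]$; hence at each point of $D$ both one-sided derivatives $g_{+}'$ and $g_{-}'$ exist (they are locally constant powers of $2$) and are strictly positive, so that all logarithms appearing in the definition of $\eta$ are well defined and integer-valued.

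The central step would be to establish, for $g,h \in F$ and $x \in D$, the multiplicativity relations
\[
	(gh)_{+}'(x) \, = \, g_{+}'(hx)\cdot h_{+}'(x) , \qquad (gh)_{-}'(x) \, = \, g_{-}'(hx)\cdot h_{-}'(x) .
\]
Here I would use that $h$ is strictly increasing and continuous: writing $s \defeq h(x+t) - h(x)$, the increment $s$ is strictly positive and tends to $0^{+}$ as $t \to 0^{+}$, so that
\[
	\frac{g(h(x+t)) - g(hx)}{t} \, = \, \frac{g(hx + s) - g(hx)}{s}\cdot\frac{h(x+t)-h(x)}{t}
\]
converges to $g_{+}'(hx)\, h_{+}'(x)$. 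The computation for the left derivatives is entirely symmetric, replacing $x+t$ by $x-t$ and using that $h(x)-h(x-t)$ is positive and tends to $0^{+}$. The one subtlety to watch is precisely that composing with the increasing map $h$ preserves the side of the limit (right increments in the domain map to right increments in the image), which is what makes the one-sided chain rule hold as stated.

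With multiplicativity in hand, applying $\log_{2}$ turns products into sums, and the identity follows case by case. Since every automorphism of $(D,{\leq})$ fixes the least and greatest elements, we have $h(0)=0$ and $h(1)=1$, so $0$ and $1$ are mapped to themselves and every interior point is mapped to an interior point. For $x \in D \setminus \{0,1\}$ one subtracts the two log-relations to obtain $\eta(gh)(x) = [\log_{2} g_{+}'(hx) - \log_{2} g_{-}'(hx)] + [\log_{2} h_{+}'(x) - \log_{2} h_{-}'(x)] = \eta(g)(hx) + \eta(h)(x)$; for $x=0$ only the right-derivative relation is used and $hx = 0$, giving $\eta(gh)(0) = \log_{2} g_{+}'(0) + \log_{2} h_{+}'(0) = \eta(g)(h0) + \eta(h)(0)$; and for $x=1$ only the left-derivative relation is used and $hx = 1$, giving $\eta(gh)(1) = -\log_{2} g_{-}'(1) - \log_{2} h_{-}'(1) = \eta(g)(h1) + \eta(h)(1)$. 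There is no serious obstacle in this argument; the only two points demanding care are the preservation of the side of the one-sided limit under composition with the increasing map $h$, and the bookkeeping that matches the boundary formula for $\eta$ correctly once the endpoints are known to be fixed.
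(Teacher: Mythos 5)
Your proof is correct and follows essentially the same route as the paper, which simply invokes ``the chain rule for left and right derivatives of monotone functions on $D$'' and declares the verification straightforward; your write-up supplies exactly the details (the one-sided chain rule via the increasing substitution $s = h(x+t)-h(x)$, the fixed endpoints $h(0)=0$, $h(1)=1$, and the three-case bookkeeping) that the paper leaves implicit.
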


Now, as it turns out, $F$ can be embedded into $\mathbb{Z}^{(D)} \! \rtimes F$ in such a way that the resulting action of $F$ on $\mathbb{Z}^{(D)}$ is free:

\begin{prop}\label{proposition:kate} The map \begin{displaymath}
	\iota \colon \, F \, \longrightarrow \, \mathbb{Z}^{(D)} \! \rtimes F , \qquad g \, \longmapsto \, \left(\eta\!\left(g^{-1}\right)\!, g\right) 
\end{displaymath} is an embedding and the associated action\footnote{cf.~Remark~\ref{remark:semidirect.product}(3)} \begin{displaymath}
	F \times \mathbb{Z}^{(D)} \! \, \longrightarrow \, \mathbb{Z}^{(D)} , \qquad (g,f) \, \longmapsto \, g\boldsymbol{.}f + \eta\!\left( g^{-1} \right)
\end{displaymath} is free (i.e., has only trivial stabilizers). \end{prop}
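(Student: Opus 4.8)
The plan is to treat $F$ and $\mathbb{Z}^{(D)} \rtimes F$ as abstract groups and to verify the two assertions separately: that $\iota$ is an injective homomorphism, and that the induced action on $\mathbb{Z}^{(D)}$ is free. First I would check the homomorphism property. Recall that in $\mathbb{Z}^{(D)} \rtimes F$ the product reads $(f,g)\cdot(f',g') = (f + g\boldsymbol{.}f',\, gg')$, where $(g\boldsymbol{.}f')(x) = f'(g^{-1}x)$ as in Remark~\ref{remark:semidirect.product}(3). Hence
\[
	\iota(g)\iota(g') \, = \, \bigl(\eta(g^{-1}) + g\boldsymbol{.}\eta(g'^{-1}),\, gg'\bigr),
\]
and comparing first coordinates with $\iota(gg') = (\eta((gg')^{-1}),\, gg')$ reduces the homomorphism property to the pointwise identity $\eta(g^{-1})(x) + \eta(g'^{-1})(g^{-1}x) = \eta(g'^{-1}g^{-1})(x)$, which is precisely Lemma~\ref{lemma:cocycle} applied to the pair $(g'^{-1}, g^{-1})$. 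Injectivity is immediate, since the projection $\mathbb{Z}^{(D)} \rtimes F \to F$ onto the second coordinate is a left inverse of $\iota$; this is what makes $\iota$ an embedding. (Note that $\eta(g^{-1})$ really has finite support, being concentrated on the breakpoints of $g^{-1}$ together with $\{0,1\}$, so $\iota$ does land in $\mathbb{Z}^{(D)} \rtimes F$.)

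Unravelling Remark~\ref{remark:semidirect.product} once more, the action of $\iota(g)$ on $f \in \mathbb{Z}^{(D)}$ is $g\cdot f = \eta(g^{-1}) + g\boldsymbol{.}f$, so $g$ stabilizes $f$ exactly when, writing $h \defeq g^{-1}$, one has $f(x) - f(hx) = \eta(h)(x)$ for all $x \in D$; freeness thus amounts to showing that this functional equation forces $h = e$, which is the substantive step. The decisive observation is that every element of $F$ fixes the endpoints $0 = \min D$ and $1 = \max D$, so I would evaluate the equation at boundary points and propagate inward. Suppose $h \neq e$; since an affine element of $F$ fixing $0$ and $1$ must be the identity, $h$ has at least one breakpoint, and I let $a_{1} \in (0,1) \cap \mathbb{Z}\!\left[\tfrac{1}{2}\right] \subseteq D$ be the leftmost one, with slope $2^{k_{1}}$ on $[0,a_{1}]$ and slope $2^{k_{2}} \neq 2^{k_{1}}$ immediately to its right. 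Evaluating the equation at $x = 0$, where $h(0)=0$, forces $\eta(h)(0) = \log_{2} h_{+}'(0) = k_{1} = 0$; hence $h$ is the identity on $[0,a_{1}]$ and in particular fixes $a_{1}$. Evaluating next at $x = a_{1}$, where now $h(a_{1}) = a_{1}$, forces $\eta(h)(a_{1}) = k_{2} - k_{1} = k_{2} = 0$, contradicting $2^{k_{2}} \neq 2^{k_{1}} = 1$. Therefore $h$ has no breakpoint, so $h = e$, and the stabilizer of every $f$ is trivial.

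I expect the homomorphism and injectivity steps to be routine once Lemma~\ref{lemma:cocycle} is available; the genuine content is freeness, and its obstacle is that the obvious global attack is vacuous. Indeed, summing $f(x) - f(hx) = \eta(h)(x)$ over all $x \in D$ gives $0 = \sum_{x \in D} \eta(h)(x)$, but this sum telescopes to $0$ for \emph{every} $h \in F$ and hence yields no constraint. The right move is instead the local one above: one exploits that $F$ fixes $0$ and $1$ to pin down the first slope from the endpoint, propagate to the leftmost breakpoint as a newly detected fixed point, and observe that $\eta(h)$ cannot vanish there. This endpoint-and-leftmost-breakpoint argument is the crux of the proof.
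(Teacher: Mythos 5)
Your proposal is correct and follows essentially the same route as the paper: the homomorphism property via Lemma~\ref{lemma:cocycle}, and freeness by propagating from the fixed endpoint $0$ to the leftmost point where the slope-change cocycle is nonzero, which the functional equation forces to vanish. The only cosmetic differences are that you phrase the crux in terms of breakpoints of $h = g^{-1}$ (the paper uses $t = \min\{x \in D \mid \eta(g)(x) \ne 0\}$ and the cocycle identity to pass between $\eta(g)$ and $\eta(g^{-1})$) and that you get injectivity from the coordinate projection, while the paper deduces it from freeness.
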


\begin{proof} For any two elements $g,h \in F$, \begin{align*}
	\iota (gh) \, &= \, \left(\eta \!\left( (gh)^{-1}\right)\!, gh \right) \,= \, \left(\eta\! \left(h^{-1} g^{-1}\right)\!,gh \right) \\
		& \stackrel{\ref{lemma:cocycle}}{=} \, \left(g\boldsymbol{.}\!\left(\eta\!\left(h^{-1}\right)\right)+\eta\!\left(g^{-1}\right)\!, gh \right) \, = \, \left(\eta\!\left(g^{-1}\right)\!,g\right) \cdot \left(\eta\!\left(h^{-1}\right)\!,h\right) \, = \, \iota (g) \iota (h) \, .
\end{align*} Thus, $\iota \colon F \to \mathbb{Z}^{(D)} \! \rtimes F$ is indeed a homomorphism. We will prove freeness of the resulting action of $F$ on $\mathbb{Z}^{(D)}$ by contradiction. To this end, assume that $f \in \mathbb{Z}^{(D)}$ and $g \in F\setminus \{ e \}$ are such that $g\boldsymbol{.}f + \eta\!\left( g^{-1} \right) = f$, which means that $f(x) = f(g^{-1}x) + \eta(g^{-1})(x)$ for all $x \in D$. Since both $g(0) = 0$ and $g(1) = 1$, our hypothesis readily implies that $\eta(g^{-1})(0) = 0 = \eta(g^{-1})(1)$. Now, as $g$ differs from the identity, the finite set $B \defeq \{ x \in D \mid \eta (g)(x) \ne 0 \}$ must be non-empty, and we have $t \defeq \min (B) \in D\setminus \{ 0,1 \}$. It follows that $g(x) = x$ for all $x \in D$ with $x \leq t$. In particular, $g(t) = t$ and hence \begin{displaymath}
	\eta \!\left(g^{-1}\right)\!(t) \, \stackrel{\ref{lemma:cocycle}}{=} \, -\eta(g)\!\left(g^{-1}t\right) \, = \, -\eta(g)(t) \, \ne \, 0 \, .
\end{displaymath} Consequently, we arrive at \begin{displaymath}
	\left( g\boldsymbol{.}f + \eta\!\left( g^{-1} \right)\right) \! (t) \, = \, f\!\left(g^{-1}t\right) \! + \eta \!\left(g^{-1}\right)\!(t) \, = \, f(t) + \eta \! \left(g^{-1}\right)\!(t) \, \ne \, f(t) \, ,
\end{displaymath} which clearly contradicts our hypothesis. Therefore, the above-mentioned action is free, which in turn entails that $\iota$ is injective, as claimed. \end{proof}

\begin{proof}[Proof of Theorem~\ref{theorem:kate}] ($\Longleftarrow$) Evidently, if the group $F$ is amenable, then so is the semi-direct product $(\mathbb{Z}/2\mathbb{Z})^{(D)} \! \rtimes F$, whence $\ell^{\infty}\!\left( (\mathbb{Z}/2\mathbb{Z})^{(D)} \right)$ admits a $(\mathbb{Z}/2\mathbb{Z})^{(D)} \! \rtimes F$-invariant mean.
	
($\Longrightarrow$) Suppose that the action of $F$ on $D$ is extensively amenable. Hence, by~\cite[Theorem~1.3]{JMMS}, there exists a mean $\mu \colon \ell^{\infty}\!\left( \mathbb{Z}^{(D)} \right) \to \mathbb{R}$ invariant under the action \begin{displaymath}
	\left( \mathbb{Z}^{(D)} \! \rtimes F \right) \! \times \mathbb{Z}^{(D)} \! \, \longrightarrow \, \mathbb{Z}^{(D)} , \quad ((f,g),h) \, \longmapsto \, g\boldsymbol{.}h + f .
\end{displaymath} In particular, $\mu$ must be invariant with respect to the action \begin{displaymath}
	\beta \colon \, F \times \mathbb{Z}^{(D)} \! \, \longrightarrow \, \mathbb{Z}^{(D)} , \qquad (g,f) \, \longmapsto \, g\boldsymbol{.}f + \eta\!\left( g^{-1} \right) .
\end{displaymath} By Proposition~\ref{proposition:kate}, the action $\beta $ is free, from which we infer---by a standard argument---the existence of a positive unital linear operator $\Phi \colon \ell^{\infty}(F) \to \ell^{\infty}\bigl( \mathbb{Z}^{(D)} \bigr)$ satisfying \begin{displaymath}
		\forall g \in F \ \forall f \in \ell^{\infty}(F) \colon \qquad \Phi (f \circ \lambda_{g}) \, = \, \Phi(f) \circ \beta_{g} \, .
	\end{displaymath} In turn, the map $\mu \circ \Phi \colon \ell^{\infty}(F) \to \mathbb{R}$ will constitute a left-invariant mean on $\ell^{\infty}(F)$, witnessing amenability of $F$. This completes the proof. \end{proof}

\section*{Acknowledgments}

This research has received funding of NSF Grant DMS-1932552 and NSF CAREER Award DMS-1352173. The authors would like to express their sincere gratitude towards both Vladimir Pestov and Maxime Gheysens for inspiring discussions and numerous insightful comments on earlier versions of this manuscript. Furthermore, the authors are grateful to Jan Pachl for bringing Remark~\ref{remark:skew.rickert} to their attention, as well as to the anonymous referee for their very careful reading of this manuscript and numerous valuable suggestions for its improvement.


\end{document}